\numberwithin{equation}{section}
\newtheorem{thm}{Theorem}[section]
\newtheorem{coro}[thm]{Corollary}
\newtheorem{lemma}[thm]{Lemma}
\newtheorem{prop}[thm]{Proposition}
\theoremstyle{remark}
\newtheorem{remark}[thm]{\textbf{Remark}}
\theoremstyle{definition}
\newtheorem{defn}[thm]{Definition}
\newtheorem{example}[thm]{Example}
\newtheorem{question}[thm]{Question}
\newcommand{\red}{\mathrm{red}}
\newcommand{\Br}{\mathrm{Br}}
\newcommand{\ram}{\mathrm{ram}}
\newcommand{\Ram}{\mathrm{Ram}}
\newcommand{\di}{\mathrm{div}}
\newcommand{\Pic}{\mathrm{Pic}}
\newcommand{\et}{\text{\'et}}
\newcommand{\Spec}{\mathrm{Spec}\,}
\newcommand{\Proj}{\mathrm{Proj}\,}
\newcommand{\Hom}{\mathrm{Hom}}
\newcommand{\lra}{\longrightarrow}
\newcommand{\empha}[1]{\textbf{\emph{#1}}}
\newcommand{\ov}[1]{\overline{#1}}
\newcommand{\set}[1]{\{\,{#1}\,\}}
\newcommand{\simto}{\xrightarrow{\sim}}
\begin{document}
\title{\textbf{Division Algebras and Quadratic Forms over Fraction Fields of Two-dimensional
Henselian Domains}}
\author{Yong HU\footnote{Math\'{e}matiques, B\^{a}timent 425, Universit\'{e} Paris-Sud, 91405,
Orsay Cedex,  France,\ \ \ e-mail: hu1983yong@gmail.com}}

\maketitle

\begin{abstract}
Let $K$ be the fraction field of a 2-dimensional, henselian, excellent local domain with
finite residue field $k$. When the characteristic of $k$ is not 2, we prove that every
quadratic form of rank $\ge 9$ is isotropic over $K$ using methods of Parimala and
Suresh, and we obtain the local-global principle for isotropy of quadratic forms of rank 5
with respect to discrete valuations of $K$. The latter result is proved by making a careful study
of ramification and cyclicity of division algebras over the field
$K$, following Saltman's methods. A key step is the proof of the following result, which answers a question of Colliot-Th\'el\`ene--Ojanguren--Parimala: For a Brauer class over $K$ of prime order $q$ different from the characteristic of $k$, if it is cyclic of degree
$q$ over the completed field $K_v$ for every discrete valuation $v$
of $K$, then the same holds over $K$.  This local-global principle for cyclicity is also established over function fields of
$p$-adic curves with the same method.
\end{abstract}

\section{Introduction}\label{sec1}

Division algebras and quadratic forms over a field have been objects of interest
in classical and modern theories of algebra and number theory. They may also be naturally and closely related to
the study of semisimple algebraic groups of classical types. In recent years, there has been much interest in problems on
division algebras and quadratic forms over function fields of two-dimensional integral schemes (which we call \emph{surfaces}).

Mostly, surfaces that have been studied are those equipped with a dominant quasi-projective morphism
to the spectrum of a normal henselian excellent local domain $A$. If $A$ is of (Krull) dimension 0, these are algebraic surfaces over a field. Over function fields of these surfaces, de Jong (\cite{deJ04}) and Lieblich  (\cite{Lie11}) have proven remarkable theorems concerning the period-index problem.
If $A$ is of dimension 1, the surfaces of interest are  called \emph{arithmetic surfaces} by some authors.
Over function fields of arithmetic surfaces,  several different methods have been developed, by Saltman,
Lieblich \cite{Lie07},  Harbater--Hartmann--Krashen (\cite{HHK}) and others,  to study division algebras and/or quadratic forms.
Among others, the methods pioneered by Saltman in a series of papers (\cite{Salt97}, \cite{Salt07},
\cite{Salt08}) have been important ingredients in several work by others, including Parimala and Suresh's proof (cf. \cite{PS10} and \cite{PS10b}) of the fact that over
a non-dyadic $p$-adic function field every quadratic form of dimension $\ge 9$ has a nontrivial zero. In contrast with the arithmetic case, it seems that in the case where $A$ is $2$-dimensional, fewer results have been established in earlier work.

\

In this paper, we concentrate on the study of  division algebras and quadratic forms over the function field $K$ of a surface that admits a proper birational morphism to the spectrum of a 2-dimensional henselian excellent local domain $R$. The spectrum $\Spec R$ will sometimes be called a \emph{local henselian surface} and a regular surface $X$ equipped with a proper birational morphism $X\to \Spec R$ will be referred to as a \emph{regular proper model} of $\Spec R$.  As typical examples, one may take $R$ to be the henselization at a closed point of an algebraic or an arithmetic surface, or the integral closure
of the ring $A[\![t]\!]$ of formal power series  in a finite extension of its fraction field $\mathrm{Frac}(A[\![t]\!])$, where $A$ is a complete discrete valuation ring. Note that the ring $R$ need not be regular in our context.

Let $k$ denote the residue field of $R$. When $k$ is separably closed, many problems over the function field $K$ (e.g. period-index, cyclicity of division algebras, $u$-invariant and local-global principle for quadratic forms of lower dimension) have been solved by Colliot-Th\'el\`ene--Ojanguren--Parimala
in \cite{CTOP}. In the case with $k$  finite, only the local-global principle for quadratic forms of rank 3 or 4 is proved in that paper. In a more recent work of
Harbater--Hartmann--Krashen (\cite{HHK11b}), some results are obtained with less restrictive assumptions on the residue field but more restrictions on the shape of the ring $R$.

\

While the proofs pass through many analyses on ramification of division algebras, our primary goals are the following two theorems on quadratic forms.

\begin{thm}\label{thm1p4temp}
Let $R$ be a $2$-dimensional, henselian, excellent local domain with
finite residue field $k$ and fraction field $K$. Assume that $2$ is invertible in $k$.
Let $\Omega_R$ be the set of discrete valuations of $K$ which correspond
to codimension $1$ points of regular proper models  of $\Spec R$.

Then quadratic forms of rank $5$ over $K$ satisfy the local-global
principle with respect to discrete valuations in $\Omega_R$, namely,
if a quadratic form $\phi$ of rank $5$ over $K$ has a nontrivial
zero over the completed field $K_v$ for every $v\in\Omega_R$, then
$\phi$ has nontrivial zero over $K$.
\end{thm}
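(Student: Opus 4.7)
Write $\phi = \langle a_1,\ldots,a_5\rangle$ with $d := a_1\cdots a_5$ its determinant, and form the six-dimensional form $\eta := \phi \perp \langle -d\rangle$, which has trivial discriminant. Its Clifford invariant $c(\eta)\in \Br_2(K)$ is represented by a biquaternion algebra $B$, and Albert's theorem says that $\eta$ is isotropic over any extension $F$ of $K$ if and only if $\ind(B\otimes_K F)\le 2$. The plan is to use this Albert bridge to convert the rank $5$ local-global principle for $\phi$ into a Brauer-theoretic local-global statement over $K$, and then to invoke the main cyclicity theorem announced in the abstract.

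From the hypothesis $\phi$ is isotropic over each $K_v$ with $v\in\Omega_R$, so $\eta$ is $K_v$-isotropic for every such $v$, whence $\ind(B\otimes K_v)\le 2$ for all $v\in\Omega_R$. Since $B$ has period $2$ and $\mathrm{char}(k)\ne 2$, having index $\le 2$ over $K_v$ is the same as being cyclic of degree $2$ over $K_v$. Applying the local-global principle for cyclicity of prime-order Brauer classes at the prime $\ell=2$ (the key result highlighted in the abstract) yields $\ind(B)\le 2$ over $K$, so $\eta$ is isotropic over $K$. Inspecting an isotropic vector of $\eta$ shows that either $\phi$ itself is isotropic (in which case we are done), or $\phi$ represents $d$ over $K$.

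In this remaining case, Witt cancellation together with a harmless rescaling lets us write $\phi \cong \langle d'\rangle \perp n_Q$, where $n_Q$ is the reduced norm form of some quaternion algebra $Q$ over $K$ and $d'\in K^*$ (if $n_Q$ is hyperbolic, $\phi$ is already isotropic). Then $\phi$ is isotropic if and only if $n_Q$ represents $-d'$, equivalently, if and only if $Q$ is split by the quadratic extension $L := K(\sqrt{-d'})$. Unpacking the hypothesis at each $v\in\Omega_R$ shows that $Q$ is split by $L_w$ for every place $w$ of $L$ above $v$. Since the integral closure $R'$ of $R$ in $L$ is again a two-dimensional henselian excellent local domain with finite residue field, applying the local-global machinery over $L$ (in the form of a vanishing statement for quaternion classes, extracted from the same cyclicity/ramification analysis on $\Spec R'$) delivers $[Q]_L = 0$, which completes the proof.

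The principal obstacle is the first step: the local-global for cyclicity at $\ell=2$ rests on a delicate Saltman-style analysis of the ramification divisors of Brauer classes on regular proper models of $\Spec R$. One must, after suitable blow-ups, place the ramification divisors in normal-crossings position and then control both their residues and their interaction with the finite residue field $k$. The second step then demands that these techniques be applied uniformly over $L$ and that the cyclicity statement be upgraded to a ``index $1$ versus index $\le 2$'' local-global for period $2$ classes over $L$; securing this upgrade is the most technical point, but it can be read off from the same ramification analysis applied to $\Spec R'$.
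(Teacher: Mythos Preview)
Your first half (Albert form plus the local--global cyclicity to get $\eta$ isotropic, hence $\phi\cong\langle d'\rangle\perp n_Q$ up to similarity) is correct and agrees with the paper's argument via Corollary~\ref{coro3p22temp}.

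The second half has a genuine gap. You assert that $\phi_{K_v}$ isotropic forces $Q_{L_w}$ to be split for every $w\mid v$, but this fails precisely at the places $v$ where $-d'\in K_v^{*2}$. At such a place $\langle d'\rangle_{K_v}\cong\langle -1\rangle$ and $\phi_{K_v}\cong\langle -1\rangle\perp n_Q$ is \emph{automatically} isotropic, since $n_Q$ represents $1$; the hypothesis carries no information about $Q_{K_v}$. But $L\otimes_K K_v\cong K_v\times K_v$ there, so $L_w=K_v$ and $[Q]_{L_w}=[Q]_{K_v}$, which may well be nonzero (e.g.\ $Q$ can be ramified at $v$). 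Hence you do not know $[Q]_{L_w}=0$ for all $w\in\Omega_{R'}$, and the Brauer-group injectivity over $L$ cannot be invoked. (Incidentally, that injectivity is not a consequence of Saltman's ramification analysis; it is the much simpler fact $\Br(\mathcal{X}')=0$ for a regular proper model, cf.\ Corollary~\ref{coro2p5temp}.)

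The paper's second step circumvents exactly this difficulty by staying over $K$. One realizes $\phi$ as a $5$-dimensional subform of the $3$-fold Pfister form $\rho=\langle\!\langle -a,-b,-c\rangle\!\rangle$ (equivalently, $\rho=\langle\!\langle -d'\rangle\!\rangle\otimes n_Q$), so the problem becomes the vanishing of the symbol $(-a,-b,-c)\in H^3(K,\mathbb{Z}/2)$. At the troublesome split places this symbol dies for free because the factor $(-d')$ dies in $H^1(K_v,\mathbb{Z}/2)$; at the remaining places it dies by the isotropy hypothesis. One then concludes by the injectivity of $H^3(K,\mathbb{Z}/2)\to\prod_{v\in\mathcal{X}^{(1)}}H^3(K_v,\mathbb{Z}/2)$, which is Proposition~\ref{prop4p1temp} (an input from Saito's class field theory). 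Passing to $H^3(K)$ rather than to $\Br(L)$ is what makes the split places harmless, and this is the step your proposal is missing.
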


The following theorem amounts to saying that the field $K$ has $u$-invariant (cf. $\S$\ref{sec4p2}) equal to 8.

\begin{thm}\label{thm1p5temp}
Let $R$ be a $2$-dimensional, henselian, excellent local domain with finite residue field $k$ and fraction field $K$.
Assume that $2$ is invertible in $k$.

Then every quadratic form of rank $\ge 9$ has a nontrivial zero over $K$.
\end{thm}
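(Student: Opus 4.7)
My plan is to adapt the Parimala--Suresh method, originally developed to prove $u = 8$ for function fields of non-dyadic $p$-adic curves, to the present two-dimensional henselian setting. The abstract signals that this is the intended route, and the machinery on division algebras developed earlier in the paper --- ramification analysis on regular proper models of $\Spec R$ and the local-global principle for cyclicity --- plugs in exactly where it is needed.

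By splitting off hyperbolic planes and passing to orthogonal complements, it suffices to prove that every quadratic form $\phi$ over $K$ of rank exactly $9$ is isotropic, i.e.\ that $u(K) \leq 8$. The strategy is to work through the filtration $W(K) \supset IK \supset I^2 K \supset I^3 K \supset I^4 K$ of the Witt ring and to bound the anisotropic dimension in each piece. Two key inputs are required: (i) the vanishing $I^4 K = 0$, so that by the Arason--Pfister Hauptsatz any anisotropic form in $I^3 K$ has rank at most $8$; and (ii) a bound on the index of classes in ${}_2\Br(K)$, which, via the identification of $I^2 K / I^3 K$ with ${}_2\Br(K)$ through the Clifford invariant, bounds the anisotropic dimension of forms in $I^2 K$. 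Combining these bounds with the standard dimension arithmetic in the Witt ring, exactly as in Parimala--Suresh, yields $u(K) \leq 8$ and hence the desired isotropy of any rank-$9$ form.

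The crux of the proof --- and the step I expect to be the main obstacle --- is establishing (i) and (ii). Both proceed by ramification analysis on a regular proper model $X \to \Spec R$ in the style of Saltman. For (ii), given $\alpha \in {}_2\Br(K)$ one resolves its ramification divisor on $X$, modifies $X$ by blow-ups following Saltman's ``hot point / cold point'' combinatorics to put this divisor into a good configuration, and then splits $\alpha$ along a cyclic field extension furnished by the local-global principle for cyclicity stated in the abstract. For (i), an analogous argument controls $H^4(K, \mu_2)$ by comparing symbols at the completions $K_v$ for $v \in \Omega_R$, reducing the vanishing to one-dimensional class field theory over the residue curves of $X$. The finite-residue-field hypothesis on $k$ is essential at both steps, because the completions of $K$ at codimension-$1$ points of $X$ are complete discretely valued fields whose residue fields are one-variable function fields over $k$; only in this setting does local class field theory produce the cyclic splitting extensions one needs. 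Theorem~\ref{thm1p4temp} is invoked in the final Witt-ring manipulations, at the stages where rank-$5$ subforms appear.
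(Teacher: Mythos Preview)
Your outline names the right method --- Parimala--Suresh --- but the two ``key inputs'' you isolate are not sufficient, and one of your explicit claims is false. The assertion that ``$I^4K=0$, so that by the Arason--Pfister Hauptsatz any anisotropic form in $I^3K$ has rank at most $8$'' misreads the Hauptsatz: it gives a \emph{lower} bound (a nonzero class in $I^nK$ has anisotropic dimension $\ge 2^n$), not an upper bound. The upper bound you want requires the separate and substantial fact that every element of $H^3(K,\mathbb{Z}/2)$ is a \emph{single} symbol, so that every class in $I^3K$ is represented by one $3$-fold Pfister form. In the paper this is Theorem~\ref{thm4p2NEW}, and its proof relies on Saito's class field theory (Proposition~\ref{prop4p1temp}) together with a surface argument of Parimala--Suresh; it is not a formality and you have not addressed it. More broadly, the Parimala--Suresh criterion (Proposition~\ref{prop4p4NEW}) has five hypotheses, not two: beyond $I^4K=0$ and the Brauer index bound, one needs the $H^3$-symbol result just mentioned and two ``common slot'' linkage statements for $3$-fold Pfister forms. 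These linkage conditions are precisely what drives the Witt-ring arithmetic; without them the dimension count does not close.

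Several attributions are also off. The index bound $\mathrm{ind}\mid 4$ on ${}_2\Br(K)$ (Theorem~\ref{thm3p6temp}) comes directly from Saltman's biquadratic splitting (Theorem~\ref{thm3p3temp}) and does not use the local-global principle for cyclicity. The vanishing $I^4K=0$ is obtained from the Artin--Gabber cohomological-dimension theorem (Corollary~\ref{coro4p7temp}), not via a local-global comparison over $\Omega_R$. And Theorem~\ref{thm1p4temp} plays no role in the proof of Theorem~\ref{thm1p5temp}; the two results are logically independent in the paper.
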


 Over the function field of an arithmetic surface over a complete discrete valuation ring,
the same local-global principle as in Thm.$\;$\ref{thm1p4temp} is proved for
all quadratic forms of rank $\ge 3$ in \cite[Thm.$\;$3.1]{CTPaSu}, by using the patching method of  \cite{HHK}. In the case that $R=A[\![t]\!]$ is a ring of formal power series in
one variable over a complete discrete valuation ring  $A$, the same
type of local-global principle has been proven for quadratic forms
of rank $\ge 5$ in \cite{Hu10}, using the arithmetic case established by Colliot-Th\'el\`ene--Parimala--Suresh (\cite{CTPaSu}). (See Remark$\;$\ref{remark4p2temp} for more
information.) However, in the general local henselian case, the lack of an appropriate patching method has been an obstacle to proving the parallel
local-global result. So for a field $K$ as in Thm.$\;$\ref{thm1p4temp}, the local-global principle for quadratic forms of rank 6, 7 or 8 remains open.

In the case of a $p$-adic function field, it is known that at least three methods
can be used to determine the $u$-invariant: the cohomological method of Parimala--Suresh (\cite{PS10}), the patching method of Harbater--Hartmann--Krashen
(\cite{HHK}) and the method of Leep (\cite{Le10}), which is built on results from \cite{HB10}. But in the case of the function field of a local henselian surface considered here, not all of them seem to still work. For the fraction field of a power series ring $R=A[\![t]\!]$ over a complete discrete valuation ring with finite residue field, it is known that the $u$-invariant is $\le 8$ (cf. \cite[Coro.$\;$4.19]{HHK}). Our proof of this result for general $R$ (with finite residue field) follows the method of Parimala and Suresh (\cite{PS10}, \cite{PS10b}).

Theorem$\;$\ref{thm1p5temp} implies that the $u$-invariants $u(K)$ of the fraction field $K$ and $u(k)$ of the residue field $k$ satisfy
the relation $u(K)=4u(k)$ when the residue field $k$ is finite. A question of Suresh asks if this relation still holds when
$k$ is an arbitrary field of characteristic $\neq 2$. The answer is known to be affirmative in some other special cases, but the
general case seems to remain open.  (See Question$\;$\ref{Ques4p11temp} for more information.)

\

As a byproduct, we also obtain (under the assumption of
Thm.$\;$\ref{thm1p5temp}) a local-global principle for torsors of the
special orthogonal group $\mathrm{SO}(\phi)$ of a quadratic form
$\phi$ of rank $\ge 2$ over $K$ (cf. Thm.$\;$\ref{thm4p11temp}). In fact, Theorem$\;$\ref{thm1p5temp} will also be useful
in the study of local-global principle for torsors under some simply connected groups of classical types over $K$ (cf. \cite{Hu12}).

\

The main tools we will need to prove Theorem$\;$\ref{thm1p4temp} come from technical analyses of ramification behaviors of division algebras using
methods developed by Saltman (\cite{Salt97}, \cite{Salt07}, \cite{Salt08}). A key ingredient is the following result.

\begin{thm}\label{thm1p3temp}
Let $R$ be a $2$-dimensional, henselian, excellent local
domain with finite residue field $k$, $q$
 a prime number unequal to the characteristic of  $k$, $K$ the fraction field of $R$ and
$\alpha\in\Br(K)$ a Brauer class of order $q$. Let $\Omega_R$ be the
set of discrete valuations of $K$ which correspond to codimension $1$ points of regular proper models  of $\Spec R$.

If for every $v\in \Omega_R$, the Brauer class $\alpha\otimes_KK_v\in \Br(K_v)$ is
 represented by a cyclic algebra of degree $q$ over the completed field $K_v$,
then $\alpha$ is represented by a cyclic algebra of degree $q$ over $K$.
\end{thm}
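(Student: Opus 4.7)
The plan is to adapt Saltman's ramification analysis of prime-order Brauer classes on regular two-dimensional models to this henselian setting, with the local cyclicity hypothesis serving as precisely the input that kills the usual obstruction to a cyclic representation of degree $q$.

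First I reduce to the case $\mu_q\subset K$. Setting $K':=K(\mu_q)$, the integral closure $R'$ of $R$ in $K'$ is again a $2$-dimensional henselian excellent local domain with finite residue field, and $[K':K]$ divides $q-1$, hence is coprime to $q$. A standard restriction--corestriction argument shows that $\alpha$ is cyclic of degree $q$ over $K$ if and only if $\alpha_{K'}$ is cyclic of degree $q$ over $K'$, and the local hypothesis ascends to $\Omega_{R'}$ because cyclicity is preserved under scalar extension. Next, choose a regular proper model $\pi\colon X\to\Spec R$ on which the ramification divisor $D=\sum_i D_i$ of $\alpha$ is a strict normal crossings divisor with regular components, available by embedded resolution in the excellent $2$-dimensional setting. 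Following Saltman (\cite{Salt07}, \cite{Salt08}), the residue $\partial_{D_i}(\alpha)\in H^1(k(D_i),\mathbb{Z}/q)$ defines a cyclic extension $L_i/k(D_i)$ of degree dividing $q$, and at each crossing point $P\in D_i\cap D_j$ the pair of residues, measured against the valuations of $k(D_i),k(D_j)$ centered at $P$, is classified as cold, chilly, or hot. After a sequence of blow-ups and uniformizer changes that eliminates chilly points, Saltman's machinery produces a cyclic algebra of degree $q$ representing $\alpha$, provided no hot points remain.

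The core step is to use the local cyclicity hypothesis to exclude hot points. At a candidate hot point $P\in D_i\cap D_j$, the completion $K_{v_{D_i}}$ is a complete discretely valued field with residue field $k(D_i)$, which itself carries a valuation centered at $P$. Writing $\alpha_{K_{v_{D_i}}}\simeq(a,b)_q$ as a symbol (the form cyclicity takes once $\mu_q\subset K$) makes both the residue $\partial_{D_i}(\alpha)$ and its further ramification at $P$ explicit in terms of $a,b$; combining this with the analogous description at $v_{D_j}$ forces the pair $(\partial_{D_i}\alpha,\partial_{D_j}\alpha)$ to be symmetric at $P$ in a manner incompatible with hotness. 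Once hot points are excluded and chilly points eliminated, an explicit $f\in K^\times$ built from uniformizers of the components of $D$ yields a cyclic degree $q$ splitting field $K(\sqrt[q]{f})$, realizing $\alpha$ as a cyclic algebra $(f,g)_q$ for some $g\in K^\times$.

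The main obstacle will be the precise local computation translating ``$\alpha_{K_v}$ cyclic of degree $q$'' into non-hotness at the relevant closed points, done uniformly over all crossings and stable under the model modifications used to remove chilly points. The analysis must accommodate both horizontal components (where $k(D_i)$ is a complete discretely valued field) and vertical ones (where $k(D_i)$ is a function field in one variable over the finite field $k$), and care is needed because the residue field of a closed point $P\in X$ can be a nontrivial finite extension of $k$.
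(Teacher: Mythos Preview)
Your overall framework---choose a regular proper model with snc ramification, invoke Saltman's classification, and use the local hypothesis to rule out the obstruction---matches the paper. But several steps are either wrong as stated or too vague to carry the argument.

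First, the reduction to $\mu_q\subset K$ via restriction--corestriction is not the ``standard'' argument you claim. Corestriction gives you that $\mathrm{ind}(\alpha)$ divides $q$, but ``index $q$'' is not known in general to imply ``cyclic of degree $q$'' for primes $q\ge 5$. What is true (Albert, cf. \cite[Prop.~0.1]{Salt07}) is that if $\alpha$ is split by a \emph{radical} extension $K(\sqrt[q]{g})/K$ then $\alpha$ is cyclic, and the paper exploits exactly this: it constructs such a $g$ directly over $K$ (Prop.~\ref{prop3p14temp}) using only that $k$ is perfect, so no reduction is needed.

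Second, blow-ups eliminate cool points and chilly \emph{loops}, not chilly points (Prop.~\ref{prop3p9temp}). Chilly points persist and must be handled in the construction of the splitting Kummer extension.

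Third, and most importantly, your mechanism for excluding hot points does not match what is actually needed and is not clearly correct. At a hot point both residue characters $\chi_i,\chi_j$ are \emph{unramified} at $P$, so there is no ``further ramification at $P$'' to read off from the symbol $(a,b)_q$. The paper does not compare the two completions $K_{v_{D_i}}$ and $K_{v_{D_j}}$; it works one component $C$ at a time and shows that the \emph{residual Brauer class} $\beta_C\in\Br(\kappa(C))$ (not the residue character) is split by the ramification. The argument is direct: writing $\alpha_{K_v}=(\chi_v,b_v)$ and distinguishing whether the cyclic extension defined by $\chi_v$ is ramified or unramified at $v$, one exhibits in either case a totally ramified degree-$q$ extension of $K_v$ killing $\alpha_{K_v}$, whence the residual class with respect to that extension is zero. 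By Remark~\ref{remark3p17temp}(2) (equivalently, Coro.~\ref{coro3p20temp} when $k$ is finite), this is exactly what is required to run Saltman's machine and conclude cyclicity. Your sketch should be reorganized around the residual classes $\beta_C$ rather than around a direct symmetry argument at nodes.
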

Actually, as the same proof applies to the function field of a $p$-adic curve,
a similar result over $p$-adic function fields, which seems not to have been treated in the literature, holds as well (cf. Thm.$\;$\ref{thm3p23temp}).
Note that a special case of Thm.$\;$\ref{thm1p3temp} answers a question raised in \cite[Remark$\;$3.7]{CTOP}.

\

Here is a brief description of the organization of the paper, together with some auxiliary results obtained in the process of proving the above mentioned theorems.

Section \ref{sec2} is concerned with preliminary reviews on
Brauer groups and  Galois symbols. The goal is to introduce some basic notions and recall standard results that we will frequently use later. In section \ref{sec3}
we recall some of the most useful techniques and results from Saltman's papers (\cite{Salt97}, \cite{Salt07}, \cite{Salt08}) and we prove Theorem$\;$\ref{thm1p3temp}. We also prove over the field $K$ considered in Thm.$\;$\ref{thm1p4temp} two local analogs of more global results Saltman had shown: that the index of
a Brauer class of period prime to the residue characteristic divides the square of its period, and that a class of prime index $q$ which is different from the residue characteristic is represented by a cyclic algebra of degree $q$. This last statement is proved by generalizing a result of Saltman on
 modified Picard groups. Finally, we will concentrate on results about quadratic forms in section \ref{sec4}. The proofs of Theorems \ref{thm1p4temp} and \ref{thm1p5temp} build upon the work of Parimala and Suresh (\cite{PS10}, \cite{PS10b}) as well as a result from Saito's class
field theory for 2-dimensional local rings (\cite{Sai87}).

\

To ease the discussions,  we fix some notations and terminological
conventions for all the rest of the paper:

\

$\bullet$ All schemes are assumed to be noetherian and separated.
All rings under consideration will be noetherian (commutative with
1).

$\bullet$ A \emph{curve} (resp. \emph{surface}) means an
integral scheme of dimension 1 (resp. 2).

$\bullet$ Given a scheme $X$, we denote by $\Br(X)$ its
cohomological Brauer group, i.e., $\Br(X):=H^2_{\et}(X\,,\,\mathbb{G}_m)$.
If $X=\Spec A$ is affine, we write $\Br(A)$ instead of $\Br(\Spec
A)$.

$\bullet$ If $X$ is a scheme and $x\in X$, we write $\kappa(x)$ for
the residue field of $x$, and if $Z\subseteq X$ is an irreducible
closed subset with generic point $\eta$, then we write
$\kappa(Z):=\kappa(\eta)$.

$\bullet$ The reduced closed subscheme of a scheme $X$ will be written as $X_{\red}$.

$\bullet$ A discrete valuation will always be assumed normalized
(nontrivial) and  of rank 1.

$\bullet$ Given a field $F$ and a scheme $X$ together with a
morphism $\Spec F\to X$, $\Omega(F/X)$ will denote the set of
discrete valuations of $F$ which have a center on $X$. If $X=\Spec
A$ is affine, we write $\Omega(F/A)$ instead of $\Omega(F/\Spec A)$.

$\bullet$ Given a scheme $X$ and $i\in\mathbb{N}$, $X^{(i)}$ will denote the set of codimension $i$ points of $X$, i.e., $X^{(i)}=\set{x\in X\,|\,\dim\mathscr{O}_{X,\,x}=i}$.
If $X$ is a normal integral scheme with function field $F$, we will sometimes identify $X^{(1)}$ with the set of discrete valuations of $F$ corresponding to points in $X^{(1)}$.

$\bullet$ For an abelian group $A$ and a positive integer
$n$, let $A[n]$ denote the subgroup consisting of $n$-torsion
elements of $A$ and let $A/n=A/nA$, so that there is a natural exact sequence
\[
0\lra A[n]\lra A\overset{n}{\lra}A\lra A/n\lra 0\,.
\]

$\bullet$ Given a field $F$, $F_s$ will denote a fixed separable closure of $F$ and $G_F:=\mathrm{Gal}(F_s/F)$ the absolute Galois
group. Galois cohomology $H^i(G_F\,,\,\cdot)$ of the group $G_F$ will be written $H^i(F\,,\,\cdot)$ instead.

$\bullet$ $R$ will always denote a 2-dimensional, henselian, excellent local domain with fraction field $K$ and residue field $k$.

$\bullet$ By a \emph{regular proper model} of $\Spec R$, we mean a regular integral scheme $\mathcal{X}$ equipped with a proper birational morphism $\mathcal{X}\to\Spec R$. A discrete valuation of $K$ which corresponds to a codimension 1 point of a regular proper model of $\Spec R$ will be referred to as
a \emph{divisorial valuation} of $K$. We denote by $\Omega_R$ the set of divisorial valuations of $K$.

\section{Some preliminaries}\label{sec2}

\subsection{Brauer groups of low dimensional schemes}\label{sec2p2}
Since we will frequently use arguments related to Brauer groups of curves or surfaces, let us briefly review some basic facts in this respect.

\begin{thm}[{\cite{GB2}, \cite{CTOP}}]\label{thm2p1temp}
Let $X$ be a $($noetherian$)$ scheme of dimension $d$.

$(\mathrm{i})$ If $d\le 1$, then the natural map $\Br(X)\to\Br(X_{\red})$ is an isomorphism.

$(\mathrm{ii})$ If $X$ is regular and integral with function field $F$, then the natural map $\Br(X)\to\Br(F)$ is injective.

$(\mathrm{iii})$ If $X$ is regular, integral with function field $F$ and of dimension $d\le 2$, then $\Br(X)=\bigcap_{x\in X^{(1)}}\Br(\mathscr{O}_{X,\,x})$ inside $\Br(F)$.

$(\mathrm{iv})$ Let $A$ be a henselian local ring and let $X\to\Spec
A$ be a proper morphism whose closed fiber $X_0$ has dimension $\le
1$. If $X$ is regular and of dimension $2$, then the natural map
$\Br(X)\to\Br(X_0)$ is an isomorphism.
\end{thm}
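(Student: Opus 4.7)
The plan is to treat the four parts in turn, invoking standard techniques from Grothendieck's work on Brauer groups and from Auslander--Goldman purity.

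For part (i), I would use the exact sequence of \'etale sheaves
\[
1 \to 1 + \mathcal{N} \to \mathcal{O}_X^\times \to \mathcal{O}_{X_{\mathrm{red}}}^\times \to 1,
\]
where $\mathcal{N}$ is the sheaf of nilpotents on $X$. Filtering $1+\mathcal{N}$ by the subsheaves $1+\mathcal{N}^r$, the successive quotients identify via $1+x \mapsto x$ with coherent $\mathcal{O}_{X_{\mathrm{red}}}$-modules, whose \'etale cohomology agrees with Zariski cohomology and so vanishes in degrees exceeding $\dim X$. Since $\dim X \le 1$, d\'evissage forces $H^i(X, 1+\mathcal{N}) = 0$ for $i \ge 2$, and the long exact sequence yields $\Br(X) \simto \Br(X_{\mathrm{red}})$.

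For parts (ii) and (iii), the content is purity for Brauer groups of regular schemes of dimension $\le 2$, essentially due to Auslander--Goldman. Concretely, for every regular local ring $A$ of dimension $\le 2$ with fraction field $F$, the map $\Br(A) \to \Br(F)$ is injective with image $\bigcap_{\mathfrak p}\Br(A_{\mathfrak p})$ as $\mathfrak p$ ranges over height-one primes. Sheafifying on $X$ by intersecting Brauer groups over all stalks yields both (ii) and (iii) simultaneously.

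For part (iv), the plan is to apply proper base change. Since $X$ is a regular surface, $\Br(X) \hookrightarrow \Br(F)$ is torsion by (ii). For each integer $n$ invertible on $X$, the Kummer sequence gives
\[
0 \to \Pic(X)/n \to H^2_{\et}(X, \mu_n) \to \Br(X)[n] \to 0
\]
and the analogous sequence over $X_0$. Proper base change to the henselian base $\Spec A$ produces $H^q_{\et}(X, \mu_n) \simto H^q_{\et}(X_0, \mu_n)$, and a parallel comparison handles $\Pic/n$; taking the limit in $n$ prime to the residue characteristic $p$ gives the prime-to-$p$ part of the comparison. The main obstacle is the $p$-primary component, since proper base change fails for $\mu_{p^r}$: one must invoke a deeper input (Artin's comparison theorem for $\mathbb{G}_m$ under the proper, regular, two-dimensional hypothesis, or a direct flat-cohomology argument) to conclude $\Br(X)[p^\infty] \simto \Br(X_0)[p^\infty]$, after which part (i) identifies $\Br(X_0)$ with $\Br((X_0)_{\mathrm{red}})$ and completes the proof.
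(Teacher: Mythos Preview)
The paper's own proof is nothing more than four citations to \cite{GB2} and \cite{CTOP}, so your sketches go well beyond what the paper supplies, and the techniques you name (the nilpotent filtration for (i), Auslander--Goldman purity for (iii), proper base change plus the Kummer sequence for (iv)) are the correct ones.

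Two points deserve comment. First, your treatment of (ii) only addresses the case $\dim X\le 2$, whereas the paper's statement (ii) is for regular integral $X$ of \emph{arbitrary} dimension; the injectivity of $\Br(A)\to\Br(\mathrm{Frac}\,A)$ for a regular local ring $A$ holds without any dimension restriction and is precisely what \cite[Coro.~1.8]{GB2} records, so you should decouple (ii) from (iii) rather than derive both from the two-dimensional Auslander--Goldman statement. Second, in (iv) the phrase ``a parallel comparison handles $\Pic/n$'' conceals a real argument: one needs surjectivity of $\Pic(X)\to\Pic(X_0)$ from \cite[IV.21.9.12]{EGA4} together with $H^1$-proper base change and a snake-lemma step to obtain $\Pic(X)/n\simto\Pic(X_0)/n$; the paper in fact spells this out later, in Lemma~\ref{lemma3p13temp}. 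Your diagnosis of the $p$-primary part as the genuine obstacle, requiring input beyond proper base change for $\mu_n$, is accurate and matches what the cited reference \cite[Thm.~1.8~(c)]{CTOP} has to address.
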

\begin{proof}
(i) \cite[Lemma$\;$1.6]{CTOP}. (ii) \cite[Coro.$\;$1.8]{GB2}. (iii)
\cite[Coro.$\;$2.2 and Prop.$\;$2.3]{GB2}. (iv) \cite[Thm.$\;$1.8
(c)]{CTOP}.
\end{proof}

The following property for fields, already considered in
\cite{Salt97}, will be of interest to us:

\begin{defn}\label{defn2p2temp}
We say a field $k$ has \empha{property $B_1$} or $k$ is a $B_1$
\empha{field}, if for every proper regular integral $($not
necessarily geometrically integral$)$ curve $C$ over the field $k$,
one has $\Br(C)=0$.
\end{defn}

\begin{example}\label{exam2p3temp}
Here are some examples of $B_1$ fields:

(1) A separably closed field $k$ has property $B_1$
(\cite[Coro.$\;$5.8]{GB3}).

(2) A finite field $k$ has property $B_1$. This is classical by
class field theory, see also \cite[p.97]{GB3}.

(3) If $k$ has property $B_1$, then so does any algebraic field
extension $k'$ of $k$.
\end{example}

\begin{prop}\label{prop2p4temp}
Let $k$ be a $B_1$ field.

$(\mathrm{i})$ For any proper $k$-scheme $X$ of dimension $\le 1$, one has
$\Br(X)=0$.

$(\mathrm{ii})$ The cohomological dimension $\mathrm{cd}(k)$ of $k$ is $\le 1$, i.e.,
for every torsion $G_k$-module $A$, $H^i(k\,,\,A)=0$ for all $i\ge 2$.

$(\mathrm{iii})$ If the characteristic of $k$ is not $2$, then every quadratic form
of rank $\ge 3$ has a nontrivial zero over $k$.
\end{prop}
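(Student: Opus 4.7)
My plan is to prove the three parts in the order (ii), (iii), (i), based on a single preliminary observation and with (i) requiring the most technical work. The preliminary observation is that for any algebraic extension $L/k$, $L$ is again a $B_1$ field by Example \ref{exam2p3temp}(3), so $\Br(\mathbb{P}^1_L) = 0$; via the standard canonical isomorphism $\Br(L) \simto \Br(\mathbb{P}^1_L)$, this yields $\Br(L) = 0$ for every algebraic extension $L/k$, including $L = k$ itself.

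For (ii), I would pass from vanishing Brauer groups of all finite separable extensions to $\mathrm{cd}(k) \le 1$ as follows. For a prime $\ell \ne \mathrm{char}(k)$, the Kummer sequence yields $H^2(L, \mu_\ell) = \Br(L)[\ell] = 0$ for every finite separable $L/k$, and a standard d\'evissage on torsion $G_k$-modules of order prime to $\mathrm{char}\,k$ completes the prime-to-$p$ part. For $\ell = \mathrm{char}(k) = p > 0$, the Artin--Schreier sequence combined with $H^1(L, \mathbb{G}_a) = 0$ gives $H^2(L, \mathbb{Z}/p^r) = 0$. For (iii), if a form $\phi$ of rank $\ge 3$ in characteristic $\ne 2$ is degenerate, then any nonzero vector in its radical is already isotropic; otherwise I restrict to a $3$-dimensional nondegenerate diagonal subform $\langle a_1, a_2, a_3 \rangle$ whose zero locus $C \subset \mathbb{P}^2_k$ is a smooth conic, hence a Brauer--Severi variety of dimension one with associated class $[C] \in \Br(k)[2]$. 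Since $\Br(k) = 0$ by the preliminary observation, $[C] = 0$, so $C \simeq \mathbb{P}^1_k$ admits a $k$-point, giving a nontrivial zero of $\phi$.

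For (i), I would first apply Theorem \ref{thm2p1temp}(i) to reduce to the case $X = X_{\red}$. The case $\dim X = 0$ is immediate from the preliminary observation, since $X$ is then a finite disjoint union of spectra of finite extensions of $k$. For $\dim X = 1$, I would reduce to $X$ integral by a Mayer--Vietoris argument on the irreducible components (their pairwise intersections are $0$-dimensional, hence Brauer-trivial by the previous case and Picard-trivial because $\mathrm{Pic}$ of any $0$-dimensional scheme vanishes, so the MV exact sequence in \'etale cohomology propagates the vanishing from two components to their union). Then I normalize $\pi : \tilde X \to X$, obtaining a proper regular integral curve $\tilde X$ over $k$ with $\Br(\tilde X) = 0$ by $B_1$, and study the conductor sheaf $Q := \pi_* \mathbb{G}_{m,\tilde X}/\mathbb{G}_{m,X}$, supported on the finite non-normal locus $Z \subset X$. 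Since $\pi$ is finite, $R^i \pi_* \mathbb{G}_m = 0$ for $i \ge 1$, and Leray yields $H^1(X, Q) \twoheadrightarrow \Br(X)$; because $Q$ is concentrated on the $0$-dimensional $Z$, $H^1(X, Q) = H^1(Z, Q|_Z)$, which I would show to vanish by stalk-wise Hilbert 90 over the (algebraic, hence $B_1$) residue fields of $Z$.

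The main obstacle lies in the last step of (i): controlling $H^1(Z, Q|_Z)$ at arbitrarily bad curve singularities. Writing each stalk $Q_{\bar x}$ as a successive extension of pieces that look like $k_s^\times$ (multiplicative) or $k_s^+$ (additive) over a separable closure, and then invoking Hilbert 90 in the appropriate form, is straightforward for individual singularity types such as nodes or cusps, but the general case demands a careful structural lemma on the cokernel of the inclusion $\mathscr{O}_{X,\bar x}^{\mathrm{sh}} \hookrightarrow (\pi_*\mathscr{O}_{\tilde X})_{\bar x}$. The Mayer--Vietoris bookkeeping in the reducible step is routine by comparison.
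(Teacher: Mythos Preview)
Your arguments for (ii) and (iii) are essentially the paper's: it simply cites Serre's criterion that $\Br(k')=0$ for all finite separable $k'/k$ forces $\mathrm{cd}(k)\le 1$, rather than unpacking Kummer and Artin--Schreier, and it phrases (iii) via quaternion algebras rather than Brauer--Severi varieties, but the content is identical.

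For (i) the paper takes a shorter route. After reducing to $X$ reduced, it normalizes the whole curve at once and invokes \cite[Prop.~1.14]{CTOP}, which directly furnishes a $0$-dimensional closed subscheme $D\subset X$ with $\Br(X)\hookrightarrow \Br(X')\times\Br(D)$; since $X'$ is a disjoint union of proper regular $k$-curves (so $\Br(X')=0$ by the $B_1$ hypothesis) and $\Br(D)=0$ by the $0$-dimensional case, the result follows. Your conductor-sheaf computation is effectively a reproof of that cited injectivity, so the two approaches agree at bottom; the paper's is shorter only because it quotes the finished lemma.

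Two comments on your version. First, the Mayer--Vietoris reduction to the integral case is an unnecessary detour: a Mayer--Vietoris sequence for \emph{closed} covers with $\mathbb{G}_m$-coefficients is not an off-the-shelf tool and would itself need justification (via a Milnor square for the structure sheaves), whereas normalizing the reduced curve in one step already separates the components and handles reducibility for free. Second, the ``main obstacle'' you flag is not a genuine obstruction. Since $\tilde X$ is finite over $X$, there is a nonzero conductor, so each stalk $Q_{\bar z}$ carries a finite filtration (induced by powers of the maximal ideals) whose successive quotients are either copies of $k_s^{\times}$ or finite-dimensional $k_s$-vector spaces with semilinear $G_{\kappa(z)}$-action; both have vanishing $H^1$ by the multiplicative and additive forms of Hilbert~90, respectively, so $H^1(Z,Q|_Z)=0$ and your argument closes.
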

\begin{proof}
(i) By Thm.$\;$\ref{thm2p1temp} (i), we may assume $X$ is reduced.

For the 0-dimensional case, it suffices to prove that $\Br(L)=0$ for
a finite extension field $L$ of $k$. Indeed, the $B_1$ property
implies that $\Br(\mathbb{P}_L^1)=0$. The existence of $L$-rational points
on $\mathbb{P}^1_L$ shows that the natural map $\Br(L)\to\Br(\mathbb{P}^1_L)$
induced by the structural morphism $\mathbb{P}^1_L\to\Spec L$ is injective.
Hence, $\Br(L)=0$.

Now assume that $X$ is reduced of dimension 1. Let $X'\to X$ be the
normalization of $X$. By \cite[Prop.$\;$1.14]{CTOP}, there is a
0-dimensional closed subscheme $D$ of $X$ such that the natural map
$\Br(X)\to\Br(X')\times\Br(D)$ is injective. Now $X'$ is a disjoint
union of finitely many proper regular $k$-curves, so $\Br(X')=0$ by
the $B_1$ property. We have $\Br(D)=0$ by the 0-dimensional case,
whence $\Br(X)=0$ as desired.

(ii) As a special case of (i), we have $\Br(k')=0$ for every finite
separable extension field $k'$ of $k$. This implies $\mathrm{cd}(k)\le 1$ by \cite[p.88, Prop. 5]{Ser2}.

(iii) By (ii), we have in particular $\Br(k)[2]=H^2(k\,,\,\mu_2)=0$. Thus
every quaternion algebra over $k$ is split and the associated
quadric has a $k$-rational point. Up to a scalar multiple, every
nonsingular 3-dimensional quadratic form is associated to a quaternion algebra, and hence
isotropic.
\end{proof}

The following corollary is essentially proven in  \cite[Corollaries$\;$1.10 and 1.11]{CTOP}.

\begin{coro}\label{coro2p5temp}
Let $A$ be a $($noetherian$)$ henselian local ring and let
$X\to\Spec A$ be a proper morphism whose closed fiber $X_0$ is of
dimension $\le 1$. Assume that the residue field of $A$ has property
$B_1$.

If $X$ is regular and of dimension $2$, then $\Br(X)=0$.
\end{coro}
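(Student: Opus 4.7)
The plan is essentially to chain together two results already established in the excerpt. First I would invoke Theorem \ref{thm2p1temp}$\,$(iv), which, under the exact hypotheses of the corollary (henselian base, proper morphism, closed fiber of dimension at most $1$, $X$ regular two-dimensional), asserts that the specialization map
\[
\Br(X)\lra \Br(X_0)
\]
is an isomorphism. This reduces the problem to showing $\Br(X_0)=0$.

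Next I would observe that $X_0$ is a proper scheme over the residue field $k$ of $A$ of dimension $\le 1$, and by hypothesis $k$ enjoys property $B_1$. Therefore Proposition \ref{prop2p4temp}$\,$(i) applies directly and yields $\Br(X_0)=0$. Combining the two steps gives $\Br(X)=0$, as desired.

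There is essentially no obstacle here: the corollary is a formal combination of Theorem \ref{thm2p1temp}$\,$(iv) (which does the work of comparing $X$ to its closed fiber) and Proposition \ref{prop2p4temp}$\,$(i) (which handles proper $k$-schemes of dimension $\le 1$ when $k$ is $B_1$). The only point to double-check is that the hypotheses of Theorem \ref{thm2p1temp}$\,$(iv) match ours verbatim (they do), and that $X_0$ qualifies as a proper $k$-scheme of dimension $\le 1$ so that Proposition \ref{prop2p4temp}$\,$(i) is applicable (it does, by base change of the proper morphism $X\to\Spec A$ along $\Spec k\to\Spec A$).
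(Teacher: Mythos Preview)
Your proposal is correct and matches the paper's own proof exactly: the paper simply says ``Combine Thm.$\;$\ref{thm2p1temp} (iv) and Prop.$\;$\ref{prop2p4temp} (i).'' Your additional remark verifying that $X_0$ is a proper $k$-scheme of dimension $\le 1$ is a reasonable sanity check, though the paper does not bother to spell it out.
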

\begin{proof}
Combine Thm.$\;$\ref{thm2p1temp} (iv) and Prop.$\;$\ref{prop2p4temp} (i).
\end{proof}

\subsection{Symbols and unramified cohomology}\label{sec2p1}
This subsection is devoted to a quick review of a few standard facts about Galois symbols and residue maps. For more information, we refer the
reader to \cite{CT95}.

\

Let $F$ be a field  and $v$ a discrete valuation of $F$ with valuation ring $\mathcal{O}_v$ and residue field $\kappa(v)$. Let $n>0$ be a positive integer unequal
to the characteristic of $\kappa(v)$. Let $\mu_n$ be the Galois module on the group of $n$-th roots of unity. For an integer $j\ge 1$, let $\mu_n^{\otimes j}$
denote the Galois module given by the tensor product of $j$ copies of $\mu_n$ and define\[
\mu_n^{\otimes 0}:=\mathbb{Z}/n\quad\text{and }\quad \mu_n^{\otimes (-j)}:=\Hom(\mu_n^{\otimes j}\,,\,\mathbb{Z}/n)\,,
\]where as usual $\mathbb{Z}/n$ is regarded as a trivial Galois module. Kummer theory gives a canonical isomorphism $H^1(F\,,\,\mu_n)\cong F^*/F^{*n}$. For an
element $a\in F^*$, we denote by $(a)$ its canonical image in $H^1(F\,,\,\mu_n)=F^*/F^{*n}$. For $\alpha\in H^i(F\,,\,\mu_n^{\otimes j})$, the cup product
$\alpha\cup (a)\in H^{i+1}(F\,,\,\mu_n^{\otimes (j+1)})$ will be simply written as $(\alpha\,,\,a)$. In particular, if $a_1\,,\dotsc, a_i\in F^*$, $(a_1\,,\dotsc, \,a_i)\in H^i(F\,,\,\mu_n^{\otimes i})$ will denote the cup product $(a_1)\cup \cdots\cup(a_i)\in H^i(F\,,\,\mu_n^{\otimes i})$. Such a
cohomology class is called a \empha{symbol class}.

By standard theories from Galois or \'etale cohomology, there are \empha{residue homomorphisms} for all $i\ge 1$ and all $j\in\mathbb{Z}$
\[
\partial_v^{i,\,j}\;:\;\; H^i(F\,,\,\mu_n^{\otimes j})\lra H^{i-1}(\kappa(v)\,,\,\mu_n^{\otimes (j-1)})
\]which fit into a long exact sequence
\[
\cdots\lra H^i_{\et}(\mathcal{O}_v\,,\,\mu_n^{\otimes j})\lra H^i(F\,,\,\mu_n^{\otimes j})\overset{\partial^{i,\,j}_v}{\lra} H^{i-1}(\kappa(v)\,,\,\mu_n^{\otimes (j-1)})\lra H^{i+1}_{\et}(\mathcal{O}_v\,,\,\mu_n^{\otimes j})\lra\cdots
\]
An element $\alpha\in H^i(F\,,\,\mu_n^{\otimes j})$ is called \empha{unramified} at $v$ if $\partial^{i,\,j}_v(\alpha)=0$.

\

Now consider the case of Brauer groups.  By Thm.$\;$\ref{thm2p1temp} (ii), $\Br(\mathcal{O}_v)$
gets identified with a subgroup of $\Br(F)$. An element $\alpha\in\Br(F)$ is called \empha{unramified} at
$v$ if it lies in the subgroup $\Br(\mathcal{O}_v)\subseteq\Br(F)$. If $n>0$ is a positive integer which is invertible in $\kappa(v)$, then an element
$\alpha\in \Br(F)[n]$ is unramified at $v$ if and only if $\partial_v(\alpha)=0$, where $\partial_v$ denotes the residue map
\[
\partial_v=\partial^{2,\,1}_v\;:\; \Br(F)[n]=H^2(F\,,\,\mu_n)\lra H^1(\kappa(v)\,,\,\mathbb{Z}/n)\,.
\]
As we will frequently speak of ramification of division algebras, the above residue map $\partial_v=\partial^{2,\,1}_v$ will often be called the
\empha{ramification map} and denoted by $\mathrm{ram}_v$.

Let $X$ be a scheme equipped with a morphism $\Spec F\to X$. The
subgroup
\[
\Br_{nr}(F/X):=\bigcap_{v\in\Omega(F/X)}\Br(\mathcal{O}_v)\;\subseteq\;\Br(F),
\]where $\Omega(F/X)$ denotes the set of discrete valuations of $K$ which have a center on $X$, is referred to as the (relative) \empha{unramified Brauer group} of $F$ over $X$. A Brauer class
$\alpha\in\Br(F)$ is called \empha{unramified over $X$} if it lies in the subgroup $\Br_{nr}(F/X)$. We say a field extension $M/F$
\empha{splits all ramification of $\alpha\in\Br(F)$ over $X$} if $\alpha_M\in\Br(M)$ is unramified over $X$. When $X=\Spec A$ is affine, we write
$\Br_{nr}(F/A)$ instead of $\Br_{nr}(F/\Spec A)$.

If $X$ is an integral scheme with function field $F$ and if $X\to Y$ is
 a proper morphism, then $\Omega(F/X)=\Omega(F/Y)$ and hence
 $\Br_{nr}(F/X)=\Br_{nr}(F/Y)$. If $X$ is a regular curve or surface
 with function field $F$, then Thm.$\;$\ref{thm2p1temp} implies that $\Br_{nr}(F/X)\subseteq \Br(X)$.

\

Note that for any field $\kappa$, the Galois cohomology group
$H^1(\kappa\,,\,\mathbb{Q}/\mathbb{Z})$ is identified with the group of characters
of the absolute Galois group $G_{\kappa}$, i.e., the group $\Hom_{cts}(G_{\kappa}\,,\,\mathbb{Q}/\mathbb{Z})$
of continuous homomorphisms $f: G_{\kappa}\to \mathbb{Q}/\mathbb{Z}$. Any character
$f\in \Hom_{cts}(G_{\kappa}\,,\,\mathbb{Q}/\mathbb{Z})$ must have image of the form
$\mathbb{Z}/m\subseteq\mathbb{Q}/\mathbb{Z}$ for some positive integer $m$ and its kernel is
equal to $G_{\kappa'}$, for some cyclic Galois extension
$\kappa'/\kappa$ of degree $m$. There is a generator
$\sigma\in\mathrm{Gal}(\kappa'/\kappa)$ such that $f(\sigma)=1+m\mathbb{Z}\in \mathbb{Z}/m$.
The function $f\in \Hom_{cts}(G_{\kappa}\,,\,\mathbb{Q}/\mathbb{Z})$ is uniquely
determined by the pair $(\kappa'/\kappa\,,\,\sigma)$. In this paper
we will often write an element of $H^1(\kappa\,,\,\mathbb{Q}/\mathbb{Z})$ in this
way. In particular, the ramification $\ram_v(\alpha)\in
H^1(\kappa(v)\,,\,\mathbb{Z}/n)$ of a Brauer class $\alpha\in\Br(F)[n]$ at a discrete valuation $v\in\Omega(F/X)$
will be represented in this way.

Let $\chi\in H^1(F\,,\,\mathbb{Q}/\mathbb{Z})=\Hom_{cts}(G_F\,,\,\mathbb{Q}/\mathbb{Z})$ be a character of $G_F$
with image $\mathbb{Z}/n\subseteq \mathbb{Q}/\mathbb{Z}$, represented by a pair
$(L/F\,,\,\sigma)$, i.e., $L/F$ is a finite cyclic Galois extension of degree $n$ such that
\[
G_L=\mathrm{Ker}(\chi\,:\;G_F\lra\mathbb{Q}/\mathbb{Z})\,
\]and $\sigma\in\mathrm{Gal}(L/F)$ is a generator such that $\chi(\sigma)=1+n\mathbb{Z}\in\mathbb{Z}/n$.
Recall that (\cite[$\S$2.5]{GS}) the \empha{cyclic algebra}
$(\chi\,,\,b)$ associated with $\chi$ and an element $b\in F^*$ is the $F$-algebra
generated by $L$ and a word $y$ subject to the following multiplication
relations
\[
y^n=b\,\quad\text{and }\;\;\quad \lambda y=y\sigma(\lambda),\;\;\forall\;\lambda\in L\,.
\]It is a standard fact that $(\chi\,,\,b)$ is a central simple algebra of degree $n$ over $F$. The class of the
cyclic algebra $(\chi\,,\,b)$ in $\Br(F)[n]=H^2(F\,,\,\mu_n)$
coincides with the cup product of $\chi\in H^1(F\,,\,\mathbb{Z}/n)$ and
$(b)\in H^1(F\,,\,\mu_n)$.

If $\mu_n\subseteq F$, then by Kummer theory $L$ is of the form
$L=F(\sqrt[n]{a})$ for some $a\in F^*$. There is a primitive $n$-th
root of unity $\xi_n\in F$ such that
$\sigma(\sqrt[n]{a})=\xi_n\sqrt[n]{a}$. The cyclic algebra
$(\chi\,,\,b)$ is isomorphic to the $F$-algebra $(a\,,\,b)_{\xi_n}$,
which by definition is the $F$-algebra generated by two words
$x\,,\,y$ subject to the relations
\[
x^n=a\,,\,\;y^n=b\,,\,\; xy=\xi_nyx\,.
\]
Conversely, when $F$ contains a primitive $n$-th root of unity $\xi_n$,
the algebra $(a\,,\,b)_{\xi_n}$ associated to elements $a\,,\,b\in F^*$ is
isomorphic to $(\chi\,,\,b)$, where $\chi\in H^1(F\,,\,\mathbb{Q}/\mathbb{Z})$ is the character
represented by the cyclic extension $L/F=F(\sqrt[n]{a})/F$ and the
$F$-automorphism $\sigma\in\mathrm{Gal}(L/F)$ which sends $\sqrt[n]{a}$ to $\xi_n\sqrt[n]{a}$.
The class of the algebra $(a\,,\,b)_{\xi_n}$
in $\Br(F)$ will be denoted by $(a\,,\,b)$ when the degree $n$ and the choice of $\xi_n\in F$
are clear from the context. This notation is compatible
with the notion of symbol classes via the isomorphism
$\Br(F)[n]=H^2(F\,,\,\mu_n)\cong H^2(F\,,\,\mu_n^{\otimes 2})$ corresponding to the
choice of $\xi_n\in F$.

\section{Division algebras over local henselian surfaces}\label{sec3}

In this section we first recall a number of techniques in Saltman's method
of detecting ramification of division algebras (\cite{Salt97},
\cite{Salt07}) and then we will prove Theorem$\;$\ref{thm1p3temp}.

\subsection{Ramification of division algebras over surfaces}\label{sec3p1}

In this subsection, let $X$ be a regular excellent surface and let
$F$ be the function field of $X$. By resolution of embedded
singularities (\cite[p.38, Thm. and p.43, Remark]{Sha66},
\cite[p.193]{Lip75}), for any effective divisor $D$ on $X$, there
exists a regular surface $X'$ together with a proper birational
morphism $X'\to X$, obtained by a sequence of blow-ups, such that
the total transform $D'$ of $D$ in $X'$ is a \empha{simple normal
crossing} (snc) divisor (i.e., the reduced subschemes on the
irreducible components of $D'$ are regular curves and they meet
transversally everywhere). We will use this result without further
reference.

Let $n$ be a positive integer that is invertible on $X$ and let $\alpha\in \Br(F)[n]$
be a Brauer class of order dividing $n$. For any discrete valuation $v\in\Omega(F/X)$, let $\ram_v$ denote the ramification map (or the residue map)
\[
\ram_v=\partial^{2,\,1}_v\;:\; \Br(F)[n]=H^2(F\,,\,\mu_n)\lra H^1(\kappa(v)\,,\,\mathbb{Z}/n)\,.
\]If $v=v_C$ is the discrete valuation centered at the
generic point of a curve $C\subseteq X$, we write
$\ram_C=\ram_{v_C}$. The \empha{ramification locus} of
$\alpha\in\Br(F)[n]$ on $X$, denoted $\Ram_X(\alpha)$, is by definition
the (finite) union of curves $C\subseteq X$ such that
$\ram_C(\alpha)\neq 0\in H^1(\kappa(C)\,,\,\mathbb{Z}/n)$. The
\empha{ramification divisor} of $\alpha$ on $X$, denoted again by
$\Ram_X(\alpha)$ by abuse of notation, is the reduced divisor supported
on the ramification locus. After several blow-ups, we may assume
$\Ram_X(\alpha)$ is an snc divisor on $X$.

\begin{defn}[{\cite[$\S$2]{Salt07}}]\label{defn3p1temp}
Let $X\,,\,F$ and $\alpha$ be as above. Assume that $\Ram_X(\alpha)$ is an snc divisor on $X$.
A closed point $P\in X$ is called

(1) a \empha{distant point} for $\alpha$, if $P\notin\Ram_{X}(\alpha)$;

(2) a \empha{curve point} for $\alpha$, if $P$ lies on one and only one irreducible component of $\Ram_X(\alpha)$;

(3) a \empha{nodal point} for $\alpha$, if $P$ lies on two different irreducible components of $\Ram_X(\alpha)$.
\end{defn}

From a local study of a Brauer class at closed points in its ramification locus (\cite[Prop.$\;$1.2]{Salt97}), Saltman essentially proved the following theorem.

\begin{thm}[{\cite[Thm.$\;$2.1]{Salt97}, \cite{Salt98}}]\label{thm3p3temp}
Let $X$ be a regular excellent surface which is quasi-projective
over a ring, $F$ the function field of $X$, $n>0$ a positive integer
that is invertible on $X$, and $\alpha\in \Br(F)[n]$. Assume
$\mu_n\subseteq F$.

Then there exist $f\,,\,g\in F^*$ such that the field extension
$M/F:=F(\sqrt[n]{f}\,,\,\sqrt[n]{g})/F$ splits all ramification of
$\alpha$ over $X$, i.e., $\alpha_M\in\Br_{nr}(M/X)$.
\end{thm}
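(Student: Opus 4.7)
The plan is to reduce the global ramification-killing problem to a finite collection of local problems at the closed points of $\Ram_X(\alpha)$, solve each of them using Saltman's local structure theorem for Brauer classes on regular two-dimensional local rings, and then patch the local data into a single pair $(f,g)\in F^*\times F^*$ via weak approximation on the quasi-projective surface $X$.

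First I would apply resolution of embedded singularities to replace $X$ by a regular surface (still quasi-projective over the base ring) obtained by a finite sequence of blow-ups, on which $D:=\Ram_X(\alpha)=\sum_{i=1}^{r}C_i$ is a simple normal crossing divisor; since each blow-up is proper and birational, the set $\Omega(F/X)$ is unchanged and the reduction is harmless. For each component $C_i$, the hypothesis $\mu_n\subseteq F\subseteq\kappa(C_i)$ together with Kummer theory lets me represent the residue $\chi_i:=\ram_{C_i}(\alpha)\in H^1(\kappa(C_i),\mathbb{Z}/n)$ by an element $\bar u_i\in\kappa(C_i)^*/\kappa(C_i)^{*n}$, in the sense that $\chi_i$ corresponds to the cyclic Kummer extension $\kappa(C_i)(\sqrt[n]{\bar u_i})/\kappa(C_i)$.

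At each closed point $P\in D$ I would analyze $\alpha$ in the fraction field of the regular two-dimensional local ring $\mathcal{O}_{X,P}$ via Saltman's local analysis in \cite[Prop.~1.2]{Salt97}. At a distant point there is nothing to do. At a curve point $P\in C_i$, choosing a local equation $\pi_i$ of $C_i$ and a unit lift $u_i\in\mathcal{O}_{X,P}^*$ of $\bar u_i$, one verifies that the symbol $(u_i,\pi_i)\in H^2(F,\mu_n^{\otimes 2})$ has the same residue as $\alpha$ at $v_{C_i}$, so $\alpha-(u_i,\pi_i)$ becomes unramified in a Zariski neighborhood of $P$ along $C_i$. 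At a nodal point $P\in C_i\cap C_j$ the two residues $\chi_i,\chi_j$ are compatible at $P$, and the same local proposition produces a symbol $(f_P,g_P)$, monomial in $\pi_i$ and $\pi_j$ up to units, such that $\alpha-(f_P,g_P)$ is unramified near $P$.

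The key step is the globalization. Using quasi-projectivity of $X$ together with a weak approximation argument at the finite set of divisorial valuations $\{v_{C_i}\}$, I would produce global $f,g\in F^*$ whose images in each completion $\widehat{F}_{v_{C_i}}$ match the prescribed local symbols modulo $n$-th powers and modulo unramified Brauer classes. The class $\alpha-(f,g)\in\Br(F)[n]$ is then unramified at every $v_{C_i}$ by construction; along any curve $C\not\subseteq D$ in the support of $(f)$ or $(g)$, $\alpha$ itself is unramified and the residue of $(f,g)$ there is tame, hence killed after passing to $M=F(\sqrt[n]{f},\sqrt[n]{g})$. Consequently $\alpha_M=(\alpha-(f,g))_M$ is unramified at every $v\in\Omega(F/X)$, which is exactly the desired conclusion. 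The main obstacle I anticipate is the globalization step: one must choose $f$ and $g$ so that the extra divisors they introduce outside $D$ are transverse to $D$ and to each other, and so that the induced ramification of $(f,g)$ outside $D$ is genuinely killed in $M$ rather than merely shifted onto some auxiliary curve. This amounts to a careful combinatorial bookkeeping over the dual graph of $D$, and this is where the quasi-projective hypothesis enters decisively, providing enough global sections of appropriate line bundles to realize every prescribed local symbol simultaneously.
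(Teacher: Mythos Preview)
Your overall architecture—blow up so that $\Ram_X(\alpha)$ is snc, analyze $\alpha$ locally at closed points via \cite[Prop.~1.2]{Salt97}, then globalize using quasi-projectivity—is exactly the framework Saltman uses (the paper itself gives no proof but refers to \cite{Salt97}, \cite{Salt98} and \cite[Lemma~7.8]{Bru}). However, your treatment of nodal points, and consequently your globalization, contains a genuine error.

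At a nodal point $P\in C_i\cap C_j$, Saltman's local proposition yields
\[
\alpha\;\equiv\;(u,\pi_i)+(v,\pi_j)+r\,(\pi_i,\pi_j)\pmod{\Br(\mathcal{O}_{X,P})},
\]
and this is \emph{not} a single symbol modulo $\Br(\mathcal{O}_{X,P})$ in general. Take $n=q$ prime, $r=0$, and suppose $u(P),v(P)$ generate distinct nontrivial subgroups of $\kappa(P)^*/\kappa(P)^{*q}$ (a hot point in the sense of Definition~\ref{defn3p7temp}). For any $f=w_1\pi_i^{a_1}\pi_j^{b_1}$, $g=w_2\pi_i^{a_2}\pi_j^{b_2}$, the condition that the tame symbols $\{f,g\}_{C_i}$ and $\{f,g\}_{C_j}$ be unramified at $P$ forces $a_1b_2-a_2b_1\equiv 0\pmod q$; the resulting map $(w_1(P),w_2(P))\mapsto(\{f,g\}_{C_i}(P),\{f,g\}_{C_j}(P))$ then has rank $\le 1$ over $\mathbb{F}_q$, so its image cannot contain the linearly independent pair $(u(P),v(P))$. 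Thus no single symbol has the prescribed residues even locally at $P$, and your goal of making $\alpha-(f,g)$ unramified along every $C_i$ is unattainable.

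Saltman's actual argument targets the weaker statement that $\alpha_M$ is unramified, not that $\alpha$ differs from a symbol by an unramified class. One chooses $f,g$ so that for every $i$ the pair $(v_{C_i}(f),v_{C_i}(g))$ generates $\mathbb{Z}/n$; then $M/F$ acquires ramification index divisible by the order of $\chi_i$ at $v_{C_i}$, and $\ram_w(\alpha_M)=e(w\!\mid\! v_{C_i})\cdot\mathrm{Res}_{\kappa(w)/\kappa(C_i)}(\chi_i)=0$ for every place $w$ of $M$ above $v_{C_i}$. Quasi-projectivity is used not via weak approximation at the $v_{C_i}$ but via Bertini-type choices of global sections of very ample line bundles, producing $f,g$ whose divisors contain each $C_i$ with the correct multiplicity while their extra components meet $\Ram_X(\alpha)$ transversally and avoid the nodes. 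The remaining work—checking that $\alpha_M$ is unramified at valuations of $M$ centered at closed points of $X$, especially the nodal points—uses the full local decomposition above and is precisely where the \emph{second} Kummer generator becomes essential.
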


Although the setup of the above theorem differs from that of
\cite[Thm.$\;$2.1]{Salt97}, a careful verification shows that
Saltman's (corrected) proof (with corrections pointed out by Gabber, cf. \cite{Salt98}) still works in our
context. One can also find a proof in Brussel's notes
\cite[Lemma$\;$7.8]{Bru}. In the case where $n$ is prime, a stronger statement holds. (See \cite[Thm.$\;$7.13]{Salt08} and Prop.$\;$\ref{prop3p14temp}.)

\begin{remark}
Let $R$ be a $2$-dimensional henselian, excellent local domain with fraction
field $K$ and residue field $k$.  By resolution of
singularities for surfaces (see \cite{Lip75}, \cite{Lip78}), there exists a regular
proper model $\mathcal{X}\to\Spec R$. The structural morphism
$\mathcal{X}\to\Spec R$ is actually projective by \cite[IV.21.9.13]{EGA4}. So
Thm.$\;$\ref{thm3p3temp} applies to such a regular proper model $\mathcal{X}\to \Spec R$.

If the residue field $k$ of $R$ is finite,  Thm.$\;$\ref{thm3p3temp} has  the following
refined form over the fraction field $K$:

Let $n>0$ be a positive integer that is invertible in the finite residue field $k$. Assume that $\mu_n\subseteq R$. Then for any finite collection of Brauer classes $\alpha_i\in \Br(K)[n]\,,\,1\le i\le m$,
there exist $f,\,g\in K^*$ such that the field extension $M/K:=K(\sqrt[n]{f}\,,\,\sqrt[n]{g})/K$
splits all the $\alpha_i\,,\,i=1\,,\dotsc, m$.

In the literature, this result has been established in the case where $K$ is a function field of a $p$-adic curve and where $n$ is a prime number, and the proof is essentially an observation of Gabber and Colliot-Th\'el\`ene  (cf. \cite{CT98}, \cite[Thm.$\;$2.5]{HVG}). One may verify that essentially the same arguments work in the local henselian case considered here.
\end{remark}

We will need the following analog of \cite[Thm.$\;$3.4]{Salt97} in the sequel.

\begin{thm}\label{thm3p6temp}
Let $R$ be a $2$-dimensional, henselian, excellent local domain with
fraction field $K$ and residue field $k$. Let $n>0$ be a positive
integer that is invertible in $k$. Assume that $k$ is a $B_1$ field.

Then any Brauer class $\alpha\in\Br(K)$ of order $n$ has index dividing $n^2$.
\end{thm}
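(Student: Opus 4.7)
My plan is to mirror Saltman's strategy for the analogous result over $p$-adic function fields \cite[Thm.~3.4]{Salt97}, using as main external ingredients the ramification-splitting theorem (Thm.~\ref{thm3p3temp}) and the vanishing of Brauer groups of regular proper models of henselian $B_1$-residue local rings (Cor.~\ref{coro2p5temp}). After a standard reduction I may assume $\mu_n\subseteq K$: passing to $K':=K(\mu_n)$ and to the integral closure $R'$ of $R$ in $K'$, which is again a $2$-dimensional henselian excellent local domain (using that $R$ is excellent henselian) with $B_1$ residue field by Example~\ref{exam2p3temp}(3), the inequality $\ind_K(\alpha)\mid \ind_{K'}(\alpha_{K'})\cdot [K':K]$ combined with primary decomposition of $\alpha$ reduces matters to proving the statement over $K'$.

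I would then fix a regular proper model $\pi\colon\mathcal{X}\to\Spec R$, which exists by Lipman's resolution of singularities and is projective by \cite[IV.21.9.13]{EGA4}. Applying Thm.~\ref{thm3p3temp} to $\alpha$ on $\mathcal{X}$ yields $f,g\in K^{*}$ with $M:=K(\sqrt[n]{f},\sqrt[n]{g})$ satisfying $\alpha_M\in \Br_{nr}(M/\mathcal{X})$. Since $[M:K]\le n^{2}$, it suffices to prove that $\alpha_M=0\in\Br(M)$.

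To this end, let $S$ be the integral closure of $R$ in $M$; as in the reduction step, $S$ is a $2$-dimensional henselian excellent local domain whose residue field is still $B_1$. Choose a regular proper model $\rho\colon\mathcal{Y}\to\Spec S$. I claim $\alpha_M\in\Br(\mathcal{Y})$. By Thm.~\ref{thm2p1temp}(iii) this amounts to checking unramifiedness at every $y\in\mathcal{Y}^{(1)}$. If $w$ is the associated discrete valuation of $M$, then $v:=w|_K$ is a discrete valuation of $K$ with $\mathcal{O}_v=\mathcal{O}_w\cap K$; by properness of $\rho$ and integrality of $S/R$, $v$ has a center on $\Spec R$, and then properness of $\pi$ promotes this to a center on $\mathcal{X}$, so $v\in\Omega(K/\mathcal{X})$. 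Composing $\Spec\mathcal{O}_w\to\Spec\mathcal{O}_v\to\mathcal{X}$ gives $w\in\Omega(M/\mathcal{X})$, and the hypothesis $\alpha_M\in \Br_{nr}(M/\mathcal{X})$ then forces $\alpha_M\in\Br(\mathcal{O}_w)$. Hence $\alpha_M\in\Br(\mathcal{Y})$, and Cor.~\ref{coro2p5temp} gives $\Br(\mathcal{Y})=0$. So $\alpha_M=0$ and $\ind(\alpha)\mid[M:K]\mid n^{2}$.

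The most delicate part is the transfer step: one must keep track that the unramifiedness of $\alpha_M$ over the fixed model $\mathcal{X}$ of $\Spec R$ still applies to every discrete valuation coming from a codimension $1$ point of a model $\mathcal{Y}$ of the larger base $\Spec S$, which boils down to compatibility of centers along the integral extension $R\subseteq S$. Once that bookkeeping is in place, Thm.~\ref{thm3p3temp} and Cor.~\ref{coro2p5temp} do the rest of the work.
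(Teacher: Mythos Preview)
Your overall strategy---build a degree-$n^2$ Kummer extension via Thm.~\ref{thm3p3temp} and kill the unramified class using Coro.~\ref{coro2p5temp} on a regular proper model over the integral closure---matches the paper's, and the bookkeeping in your last two paragraphs is correct (it is precisely what the paper compresses into the line ``$\Br_{nr}(K'/\mathcal{X})=\Br_{nr}(K'/\mathcal{X}')=0$'').

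The gap is in your reduction to $\mu_n\subseteq K$. After primary decomposition you may assume $n=q^r$, but then $[K(\mu_{q^r}):K]$ divides $\phi(q^r)=q^{r-1}(q-1)$, which for $r\ge 2$ may well be divisible by $q$. The inequality $\ind_K(\alpha)\mid \ind_{K'}(\alpha_{K'})\cdot[K':K]$ therefore only yields $\ind_K(\alpha)\mid q^{2r}\cdot q^{r-1}$, not the required $q^{2r}$. Adjoining only $\mu_q$ does keep the degree prime to $q$, but then Thm.~\ref{thm3p3temp} (which demands $\mu_n\subseteq F$ for the \emph{full} $n$) is not available for $\alpha$ of order $q^r$ with $r\ge 2$. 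So the ``standard reduction'' you invoke is not standard here.

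The paper resolves this by induction on $r$. For $r=1$ one adjoins $\mu_q$ (degree prime to $q$) and proceeds exactly as you do. For the inductive step, one applies the hypothesis to $q\alpha$ (of order $q^{r-1}$) to obtain a separable extension $K'/K$ of degree $q^{2r-2}m'$ with $q\nmid m'$ splitting all ramification of $q\alpha$; then $q\alpha_{K'}=0$ by Coro.~\ref{coro2p5temp}, so $\alpha_{K'}$ has order dividing $q$, and the base case over $K'$ furnishes $K''/K'$ of degree $q^2m''$ with $q\nmid m''$ splitting all ramification of $\alpha_{K'}$. The composite $K''/K$ then has degree $q^{2r}m$ with $q\nmid m$, giving $\ind_K(\alpha)\mid q^{2r}$. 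This induction, rather than a single passage to $K(\mu_n)$, is the missing idea.
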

\begin{proof}
This follows on parallel lines along the proof of \cite[Thm.$\;$3.4]{Salt97}
 with suitable substitutions of the ingredients used in the case of $p$-adic function fields.
For the sake of the reader's convenience, we recall the argument.

We may assume $n=q^r$ is a power of a prime number $q$. Let $\mathcal{X}\to\Spec R$ be
a regular proper model. For any finite field extension $K'/K$, the integral closure $R'$ of $R$ in $K'$
satisfies the same assumptions as $R$ and $K'$ is the function field of a regular proper model $\mathcal{X}'$ of $\Spec R'$.
One has $\Omega(K'/\mathcal{X}')=\Omega(K'/\mathcal{X})$ and
 $\Br_{nr}(K'/\mathcal{X})=\Br_{nr}(K'/\mathcal{X}')=0$ by Thm.$\;$\ref{thm2p1temp} (iii) and Coro.$\;$\ref{coro2p5temp}. So it suffices to
 find a finite separable field extension $K'/K$ of degree $q^{2r}m$ with $q\nmid m$
 such that $K'/K$ splits all ramification of $\alpha$ over $\mathcal{X}$.

Now we proceed by induction on $r$. First assume $r=1$. Then the result  is immediate from Thm.$\;$\ref{thm3p3temp} if $\mu_q\subseteq F$. The general case
follows by passing to the extension $F(\mu_q)/F$, which has degree prime to $q$.

For general $r$, the inductive
hypothesis applied to the Brauer class $q \alpha$ implies that there is
a separable field extension $K'/K$ splitting all ramification of
$q\alpha$ over $\mathcal{X}$, which has degree $q^{2r-2}m'$, where $q\nmid m'$.
But $q\alpha_{K'}=0\in\Br(K')$ by Coro.$\;$\ref{coro2p5temp}. By the case with $r=1$, we can find a separable extension
$K''/K'$ of degree $q^2m''$ with $q\nmid m''$ that splits all
ramification of $\alpha_{K'}$ over $\mathcal{X}$. Now $K''/K$ is a separable
extension of degree $[K'':K]=q^{2r}m$ with $m=m'm''$ coprime to $q$
and $K''/K$ splits all ramification of $\alpha$ over $\mathcal{X}$, as desired.
\end{proof}

We will give an example of a Brauer class $\alpha\in\Br(K)$ of order
$n$ which is of index $n^2$ in Example$\;$\ref{exam3p21}.

\subsection{Classification of nodal points}\label{sec3p2}

To prove further results, we need more analysis on ramification at nodal points, for which we briefly recall in this
subsection some basic notions and results due to Saltman. The reader is referred to \cite[$\S\S$2--3]{Salt07} or \cite[$\S\S$7--8]{Bru} for more
details.

Let $X$ be a regular excellent surface with function field $F$ and
let $q$ be a prime number which is invertible on $X$. Let
$\alpha\in\Br(F)[q]$. Assume that $\Ram_X(\alpha)$ is an snc divisor on
$X$. Let $P\in X$ be a nodal point for $\alpha$ (cf.
Definition$\;$\ref{defn3p1temp}), lying on two distinct irreducible
components $C_1,\,C_2$ of $\Ram_X(\alpha)$. Let
$\chi_1=\ram_{C_1}(\alpha)$ and $\chi_2=\ram_{C_2}(\alpha)$ be
respectively the ramification of $\alpha$ at $C_1,\,C_2$. Since the
natural sequence induced by residue maps
\[
H^2(F\,,\,\mu_q)\lra \bigoplus_{v\in(\Spec \mathscr{O}_{X,\,P})^{(1)}}H^1(\kappa(v)\,,\,\mathbb{Z}/q)\lra
 H^0(\kappa(P)\,,\,\mu_q^{\otimes (-1)})
\]
is a complex (cf. \cite{Ka86} or \cite[Prop.$\;$2.3]{CT06}), $\chi_1=\ram_{C_1}(\alpha)\in H^1(\kappa(C_1)\,,\,\mathbb{Z}/q)$
is unramified at $P$ if and only if $\chi_2=\ram_{C_2}\in H^1(\kappa(C_2)\,,\,\mathbb{Z}/q)$ is unramified at $P$.

\begin{defn}[{\cite[$\S\S$2--3]{Salt07}}]\label{defn3p7temp}
Let $X\,,\,F\,,\,q\,,\,\alpha$ and so on be as above. Assume that $\Ram_X(\alpha)$ is an snc divisor on $X$. Let $P\in X$ be a nodal point for
$\alpha$, lying on two distinct irreducible components $C_1,\,C_2$ of $\Ram_X(\alpha)$.

(1) $P$ is called a \empha{cold point} for $\alpha$ if $\chi_1=\ram_{C_1}(\alpha)\in H^1(\kappa(C_1)\,,\,\mathbb{Z}/q)$ (and hence also
$\chi_2=\ram_{C_2}(\alpha)\in H^1(\kappa(C_2)\,,\,\mathbb{Z}/q)$) is ramified at $P$.

(2) Assume now $\chi_1$ and $\chi_2$ are unramified at $P$, so that
they lie respectively in $H^1(\mathscr{O}_{C_1\,,\,P}\,,\,\mathbb{Z}/q)$ and
$H^1(\mathscr{O}_{C_2\,,\,P}\;,\,\mathbb{Z}/q)$. Let $\chi_i(P)\in
H^1(\kappa(P)\,,\,\mathbb{Z}/q)\,,\,i=1,\,2$ be their specializations and
let $\langle\chi_i(P)\rangle$, $i=1,\,2$ be the subgroups of
$H^1(\kappa(P)\,,\,\mathbb{Z}/q)$ generated by $\chi_i(P)$ respectively.
Then $P$ is called

(2.a) a \empha{cool point} for $\alpha$ if $\langle\chi_1(P)\rangle=\langle\chi_2(P)\rangle=0$;

(2.b) a \empha{chilly point} for $\alpha$ if $\langle\chi_1(P)\rangle=\langle\chi_2(P)\rangle\neq 0$;

(2.c) a \empha{hot point} for $\alpha$ if $\langle\chi_1(P)\rangle\neq \langle\chi_2(P)\rangle$.

When $P$ is a chilly point, there is a unique $s=s(C_2/C_1)\in
(\mathbb{Z}/q)^*$ such that
\[
\chi_2(P)=s.\chi_1(P)\in H^1(\kappa(P)\,,\,\mathbb{Z}/q)\,.
\]One says that
$s=s(C_2/C_1)$ is the \empha{coefficient} of the chilly point $P$
with respect to $C_1$.
\end{defn}

\begin{remark}\label{remark3p8temp}
One may verify without much pain that our classification of nodal
points, following \cite[Definition$\;$8.5]{Bru}, is equivalent to
Saltman's original formulation, which goes as follows: First
consider the case $\mu_q\subseteq F$. Then
\[
\alpha\equiv(u\,,\,\pi)+(v\,,\,\delta)+r.(\pi\,,\,\delta)\;\pmod{\Br(\mathscr{O}_{X,\,P})}
\]by  \cite[Prop.$\;$1.2]{Salt97}. Here $u,\,v\in\mathscr{O}_{X,\,P}^*$, $r\in\mathbb{Z}/q$
and $\pi,\,\delta\in\mathscr{O}_{X,\,P}$ are local
equations of the two components of $\Ram_X(\alpha)$ passing through
$P$. The point $P$ is a \empha{cold point} if $r\neq 0\in\mathbb{Z}/q$.
Assume next $r=0\in\mathbb{Z}/q$. Then $P$ is a \empha{cool point} if
$u(P)\,,\,v(P)\in \kappa(P)^{*q}$; a \empha{chilly point} if
$u(P)\,,\,v(P)\notin \kappa(P)^{*q}$ and they generate the same
subgroup of $\kappa(P)^*/\kappa(P)^{*q}$; or a \empha{hot point}
otherwise. In the general case, let $X'\to X$ be the connected
finite \'etale cover obtained by adjoining all $q$-th roots of unity
and let $\alpha'$ be the canonical image of $\alpha$ in $\Br(F')$, where
$F'$ denotes the function field of $X'$. Then for any two points
$P_1'\,,\,P_2'\in X'$, both lying over $P\in X$, $P_1'$ is a cold
(resp. cool, resp. chilly, resp. hot) point for $\alpha'$ if and only
if $P'_2$ is, and in that case one says that $P$ is a cold (resp.
cool, resp. chilly, resp. hot) point for $\alpha$. When $P$ is chilly,
the coefficient of $P$ with respect to a component through it is
also well-defined, as the coefficient of any preimage $P'$ of $P$.
\end{remark}

To get some compatibility for coefficients of chilly points, one has
to eliminate the so-called \empha{chilly loops}, i.e., loops in the
following graph: The set of vertices is the set of irreducible
components of $\Ram_X(\alpha)$ and the number $r\ge 0$ of edges linking two vertices is equal to the number of chilly points
in the intersection of the two curves corresponding to the two vertices. (Two vertices may be joined by two or more edges and thus contribute to some loops.)

\begin{prop}[{\cite[Prop.$\;$3.8]{Salt07}}]\label{prop3p9temp}
Let $X\,,\,F\,,\,q$ and $\alpha\in \Br(F)[q]$ be as above. Assume that
$\Ram_X(\alpha)$ is an snc divisor on $X$. Then there exists a proper
birational morphism $X'\to X$, obtained by a finite number of
blow-ups, such that $\alpha$ has no cool points and no chilly loops on
$X'$.
\end{prop}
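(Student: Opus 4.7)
The plan is to follow Saltman's two-stage strategy of successive blow-ups: first I would remove all cool points, and then all chilly loops, showing in each stage that a discrete invariant strictly decreases.

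Stage 1 (cool points). Let $P$ be a cool point, lying on distinct components $C_1,C_2$ of $\Ram_X(\alpha)$, with $\chi_1(P)=\chi_2(P)=0$ in $H^1(\kappa(P),\mathbb{Z}/q)$. After reducing to the case $\mu_q\subseteq F$ via the \'etale cover trick of Remark$\;$\ref{remark3p8temp}, Saltman's local description \cite[Prop.$\;$1.2]{Salt97} gives
\[
\alpha\equiv (u,\pi)+(v,\delta)\pmod{\Br(\mathscr{O}_{X,P})},
\]
with $u(P),v(P)\in\kappa(P)^{*q}$ and $\pi,\delta\in\mathscr{O}_{X,P}$ local equations of $C_1,C_2$. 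Blowing up $P$ to obtain $\tilde X\to X$ with exceptional divisor $E\cong\mathbb{P}^1_{\kappa(P)}$, a direct computation in the chart $\pi=\tilde\pi x$, $\delta=x$ rewrites $\alpha$ as $(u,\tilde\pi)+(uv,x)$ modulo the unramified part; the residue along $E=\{x=0\}$ is represented by $uv|_E$, which is the constant $u(P)v(P)\in\kappa(P)^{*q}$, hence trivial in $\kappa(E)^*/\kappa(E)^{*q}$. Thus $E\not\subseteq\Ram_{\tilde X}(\alpha)$, the two intersections $E\cap C_i'$ become curve points rather than nodal points, and $P$ has been removed with no new cool point introduced anywhere. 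Iterating clears all cool points.

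Stage 2 (chilly loops). Once there are no cool points, consider a chilly loop $\Gamma$ and a chilly point $P$ on an edge $C_1-C_2$ of $\Gamma$ with coefficient $s=s(C_2/C_1)\in(\mathbb{Z}/q)^*$. Blowing up $P$ introduces an exceptional divisor $E$; this time $u(P),v(P)$ generate the same non-trivial subgroup of $\kappa(P)^*/\kappa(P)^{*q}$, and the analogous residue calculation shows that (generically) $E$ becomes a new component of the ramification divisor, while the two new nodal points $P_i=E\cap C_i'$ are themselves chilly, with coefficients $s(E/C_1')$ and $s(C_2'/E)$ whose product equals $s$. In the chilly graph, the edge through $P$ is thus replaced by a length-two path through the new vertex $E$; Saltman exploits the freedom to choose which chilly point on $\Gamma$ to blow up, together with the compatibility relation asserting that the product of coefficients around any chilly loop equals $1$, to force one of the newly created points to fail the chilly condition, strictly reducing an integer invariant (e.g. the number of chilly loops weighted by length). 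Induction on this invariant finishes the argument.

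The main obstacle will be Stage 2: the subtle bookkeeping of coefficients through successive blow-ups and the verification that the chosen invariant genuinely decreases, together with the crucial check that chilly-loop blow-ups never recreate cool points, so that Stages 1 and 2 compose into one finite sequence of blow-ups $X'\to X$ with the desired properties.
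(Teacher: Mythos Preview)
The paper does not give its own proof of this proposition; it simply quotes \cite[Prop.~3.8]{Salt07}. So there is no ``paper's argument'' to compare against, and your sketch has to be assessed on its own merits.

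Your Stage~1 is essentially correct: at a cool point the residue along the exceptional divisor $E$ is the constant $u(P)v(P)\in\kappa(P)^{*q}$, so $E$ is unramified, the old nodal point is replaced by two curve points, and no new nodal (hence no new cool) point is created.

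Stage~2, however, has a genuine gap. Two claims you rely on are false. First, the product of the coefficients $s(C_{i+1}/C_i)$ around a chilly loop need \emph{not} equal $1$; indeed this product is precisely the obstruction to choosing integers $s_i$ with $s(C_j/C_i)s_i\equiv s_j$ as in property (P.2) of the proof of Prop.~\ref{prop3p14temp}, and it is why chilly loops must be removed in the first place. Second, a single blow-up at a chilly point $P$ with coefficient $s=s(C_2/C_1)$ does not in general make one of the new nodal points non-chilly. In the chart $\pi=\tilde\pi x,\ \delta=x$ one finds $\alpha\equiv(u,\tilde\pi)+(uv,x)$, so the residue along $E$ is governed by $u(P)v(P)\equiv u(P)^{1+s}$. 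If $1+s\equiv 0\pmod q$ then $E$ is unramified and the edge is deleted, breaking the loop. Otherwise $E$ is ramified and both new intersection points are again chilly, with $s(E/C_1')=1+s$ and $s(C_2'/E)=s(1+s)^{-1}$; the edge is merely subdivided and the first Betti number of the chilly graph is unchanged.

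The correct termination is by iteration: blow up the new point on the $C_1'$-side, getting coefficient $2+s$, then $3+s$, and so on. Since $q$ is prime, after at most $q-1$ steps one reaches $k+s\equiv 0\pmod q$, the corresponding exceptional divisor is unramified, and the edge is genuinely removed, decreasing the number of independent loops. One checks along the way that these blow-ups never create cool points (the new nodal points are always chilly, as computed above) and that the divisor stays snc, so Stages~1 and~2 compose. Your outline is salvageable once you replace the ``product equals $1$'' claim and the single-blow-up termination by this iterated argument.
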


We also need the notion of \empha{residual class} at a ramified
place. Let $C$ be an irreducible component of $\Ram_X(\alpha)$ and let
$v=v_C$ be the associated discrete valuation of $F$. Choose any
$x\in F^*$ with $q\nmid v(x)$, so that the extension
$M/F:=F(\sqrt[q]{x})/F$ is totally ramified at $v=v_C$ and
$\alpha_M=\alpha\otimes_FM\in \Br(M)$ is unramified at the unique discrete
valuation $w$ of $M$ that lies over $v$. One has
$\kappa(w)=\kappa(v)=\kappa(C)$ and hence a well-defined Brauer
class $\beta_{C,\,x}\in \Br(\kappa(C))$ given by the specialization
of $\alpha_M\in\Br(M)$ in $\Br(\kappa(w))=\Br(\kappa(C))$. Let
$(L/\kappa(C)\,,\,\sigma)=\ram_C(\alpha)$ be the ramification of $\alpha$
at $C$. Whether or not $\beta_{C,\,x}\in\Br(\kappa(C))$ is split by
the field extension $L/\kappa(C)$ does not depend on the choice of
$M=F(\sqrt[q]{x})$ (\cite[Coro.$\;$0.7]{Salt07}). We say that the
\empha{residual classes of $\alpha$ at $C$ are split by the
ramification}, if for one (and hence for all) choice of
$M=F(\sqrt[q]{x})$, the residual class
$\beta_{C,\,x}\in\Br(\kappa(C))$ is split by $L/\kappa(C)$
(\cite[p.821, Remark]{Salt07}). When we are only interested in this
property, we will simply write $\beta_C$ for
$\beta_{C,\,x}\in\Br(\kappa(C))$ with respect to any choice of $x$.

It is proved in \cite[Prop.$\;$0.5 and Prop.$\;$3.10 (d)]{Salt07} that if $\alpha$ has index $q$, then all the residual classes $\beta_C$ of $\alpha$ at all components $C$ of $\Ram_X(\alpha)$ are split by the ramification and there are no hot points for $\alpha$ on $X$.

\subsection{Splitting over a Kummer extension}\label{sec3p3}

Let $X$ be a reduced scheme which is projective over a ring. Let $\mathcal{P}\subseteq X$ be a finite set of closed points of $X$. Denote by
$\mathscr{K}_X$ the sheaf of meromorphic functions on $X$ and set $\mathcal{P}^*:=\oplus_{P\in\mathcal{P}}\kappa(P)^*$. Let $\mathscr{O}_{X,\,\mathcal{P}}^*$ denote the kernel of the natural
surjection of sheaves $\mathscr{O}_X^*\to\mathcal{P}^*$, so that there is a natural exact sequence
\[
1\lra \mathscr{O}_{X\,,\,\mathcal{P}}^*\lra\mathscr{O}_X^*\lra \mathcal{P}^*\lra 1\,.
\]Define subgroups $K_{\mathcal{P}}^*\subseteq H^0(X\,,\,\mathscr{K}^*_X)$ and $H_{\mathcal{P}}^0(X\,,\,\mathscr{K}^*_X/\mathscr{O}_X^*)\subseteq H^0(X\,,\,\mathscr{K}^*_X/\mathscr{O}_X^*)$ by
\[
\begin{split}
&K_{\mathcal{P}}^*:=\set{f\in H^0(X\,,\,\mathscr{K}^*_X)\,|\, f\in\mathscr{O}_{X\,,\,P}^*\;,\;\;\forall\; P\in\mathcal{P}}\\
\text{and}\quad & H^0_{\mathcal{P}}(X\,,\,\mathscr{K}^*_X/\mathscr{O}_X^*):=\set{D\in H^0(X\,,\,\mathscr{K}^*_X/\mathscr{O}_X^*)\,|\,\mathrm{Supp}(D)\cap\mathcal{P}=\emptyset}\,.
\end{split}
\]Consider the natural map
\[
\phi\;:\;\; K_{\mathcal{P}}^*\lra H^0_{\mathcal{P}}(X\,,\,\mathscr{K}^*_X/\mathscr{O}_X^*)\oplus \left(\oplus_{P\in\mathcal{P}}\kappa(P)^*\right)\;;\quad f\longmapsto (\di_X(f)\,,\,\oplus f(P))\,.
\]

\begin{prop}[{\cite[Prop.$\;$1.6]{Salt07}}]\label{prop3p11temp}
With notation as above, there is a natural isomorphism
\[
H^1(X\,,\,\mathscr{O}_{X,\,\mathcal{P}}^*)\cong \frac{H^0_{\mathcal{P}}(X\,,\,\mathscr{K}^*_X/\mathscr{O}_X^*)\oplus \left(\oplus_{P\in\mathcal{P}}\kappa(P)^*\right)}{\phi(K_{\mathcal{P}}^*)}
\]
\end{prop}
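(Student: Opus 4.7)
The plan is to imitate the classical identification $\Pic(X)=H^0(X,\mathscr{K}_X^*/\mathscr{O}_X^*)/\di(H^0(X,\mathscr{K}_X^*))$ but carrying along pointwise rigidification data at $\mathcal{P}$, realized via an explicit \v{C}ech cocycle construction. First I would define a map
\[
\Psi:H^0_{\mathcal{P}}(X,\mathscr{K}_X^*/\mathscr{O}_X^*)\oplus\left(\oplus_{P\in\mathcal{P}}\kappa(P)^*\right)\lra H^1(X,\mathscr{O}_{X,\mathcal{P}}^*)
\]
as follows. Given a pair $(D,(a_P))$, choose an open affine cover $\set{U_i}$ of $X$ fine enough that each $U_i$ contains at most one point of $\mathcal{P}$, and pick local equations $f_i\in\mathscr{K}_X^*(U_i)$ for $D$. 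Because $\mathrm{Supp}(D)\cap\mathcal{P}=\emptyset$, each $f_i$ is a unit at any $P\in U_i\cap\mathcal{P}$; after rescaling $f_i$ by a unit of $\mathscr{O}_X^*(U_i)$ we may arrange $f_i(P)=a_P$ at that $P$. Then $g_{ij}:=f_i/f_j\in\mathscr{O}_X^*(U_i\cap U_j)$ satisfies $g_{ij}(P)=1$ for every $P\in U_i\cap U_j\cap\mathcal{P}$, so $(g_{ij})$ is a \v{C}ech $1$-cocycle with values in $\mathscr{O}_{X,\mathcal{P}}^*$, whose class I set to be $\Psi(D,(a_P))$. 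Any two admissible choices of $f_i$ differ by sections of $\mathscr{O}_{X,\mathcal{P}}^*(U_i)$, so $\Psi$ is well-defined and is plainly a homomorphism.

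Next I would identify $\mathrm{Ker}(\Psi)$ with $\phi(K_{\mathcal{P}}^*)$. If $\Psi(D,(a_P))=0$, one can write $g_{ij}=h_j/h_i$ with $h_i\in\mathscr{O}_{X,\mathcal{P}}^*(U_i)$; then the local sections $f_ih_i$ agree on overlaps and glue to a global $f\in H^0(X,\mathscr{K}_X^*)$ lying in $K_{\mathcal{P}}^*$ with $\di(f)=D$ and $f(P)=a_P$, i.e.\ $(D,(a_P))=\phi(f)$. The reverse inclusion $\phi(K_{\mathcal{P}}^*)\subseteq\mathrm{Ker}(\Psi)$ is immediate from unwinding the definitions.

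Finally I would verify surjectivity. A \v{C}ech cocycle $(g_{ij})$ in $\mathscr{O}_{X,\mathcal{P}}^*$ also defines a class in $\Pic(X)=H^1(X,\mathscr{O}_X^*)$, which by the classical Cartier-divisor/line-bundle dictionary corresponds to some $D'\in H^0(X,\mathscr{K}_X^*/\mathscr{O}_X^*)$ via local equations $f'_i$ satisfying $f'_i/f'_j=g_{ij}$. The key technical step is to modify $D'$ so that its support avoids $\mathcal{P}$: since $X$ is projective over a ring and $\mathcal{P}$ is a finite set of closed points, the moving lemma for Cartier divisors supplies $\lambda\in H^0(X,\mathscr{K}_X^*)$ with $D:=D'+\di(\lambda)$ supported off $\mathcal{P}$. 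Replacing $f'_i$ by $\lambda f'_i$, which is then a unit at any $P\in U_i\cap\mathcal{P}$, one sets $a_P$ to be its value at $P$; the cocycle relation $g_{ij}(P)=1$ ensures that this value is independent of the choice of index $i$, and by construction $\Psi(D,(a_P))$ recovers the prescribed class. I expect the main obstacle to be exactly this moving step in the reduced, possibly reducible, projective setting, where one must produce rational functions with prescribed behavior at the finitely many points of $\mathcal{P}$; this is handled by combining the existence of an ample line bundle on $X$ with a base-point-freeness/Riemann--Roch type argument applied to a suitable twist.
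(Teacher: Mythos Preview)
The paper does not supply its own proof of this proposition; it is quoted verbatim from \cite[Prop.$\;$1.6]{Salt07} and used as a black box. Your \v{C}ech-cocycle construction is the natural way to establish it and is essentially what Saltman does.

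Two small technical remarks. First, when you ``rescale $f_i$ by a unit of $\mathscr{O}_X^*(U_i)$'' to force $f_i(P)=a_P$, such a global unit on $U_i$ with the prescribed value at $P$ need not exist; what you can always do is lift $a_P/f_i(P)\in\kappa(P)^*$ to a unit in $\mathscr{O}_{X,P}$ and then shrink $U_i$. Since \v{C}ech cohomology is computed on a cofinal system of covers this is harmless, but it should be said. Second, the moving step you flag is indeed the only nontrivial point, and your suggested fix works: writing the line bundle as a difference of very ample bundles, for $d\gg 0$ the evaluation map $H^0(X,\mathscr{O}(d))\to\bigoplus_{P\in\mathcal{P}}\mathscr{O}(d)\otimes\kappa(P)$ is surjective (its cokernel sits in $H^1(X,\mathcal{I}_{\mathcal{P}}(d))$, which vanishes by Serre), so one may choose a section with prescribed nonzero value at each $P$. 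This avoids any prime-avoidance issue over finite residue fields. With these adjustments your argument is complete.
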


The analog in the arithmetic case of the following proposition is  \cite[Prop.$\;$1.7]{Salt07}.
The following generalization to the case where $A$ is $2$-dimensional will be indispensable in the proofs of our results.

\begin{prop}\label{prop3p12temp}
Let $A$ be a $($noetherian$)$ normal, henselian local domain with residue
field $\kappa$, $X$ an integral scheme, and $X\to\Spec A$ a
proper morphism  whose closed fiber $X_0$ has dimension
$\le 1$ and whose generic fiber is geometrically integral. Let
$m$ be a positive integer that is invertible in $A$. Let
$\ov{X}=(X_0)_{\red}$ be the reduced closed subscheme on the
closed fiber $X_0$. Suppose that $\ov{X}$ is geometrically
reduced over $\kappa\,($e.g. $\kappa$ is perfect$)$.

Then for any finite set $\mathcal{P}$ of closed points of $X$, the natural
map
\[
H^1(X\,,\,\mathscr{O}_{X\,,\,\mathcal{P}}^*)\lra
H^1(\ov{X}\,,\,\mathscr{O}_{\ov{X}\,,\,\mathcal{P}}^*)
\]is surjective and induces a canonical isomorphism
\[
H^1(X\,,\,\mathscr{O}_{X\,,\,\mathcal{P}}^*)/m\cong
H^1(\ov{X}\,,\,\mathscr{O}^*_{\ov{X}\,,\,\mathcal{P}})/m\,.
\]
\end{prop}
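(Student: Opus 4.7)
The plan is to deduce both assertions from corresponding statements about Picard groups, using the defining sequence $1\to\mathscr{O}^*_{X,\mathcal{P}}\to\mathscr{O}^*_X\to\mathcal{P}^*\to 1$ (and its analogue on $\ov{X}$). Observe first that, since $X\to\Spec A$ is proper and $A$ is local, every closed point of $X$ lies in $X_0$ and hence in $\ov{X}=(X_0)_{\red}$, so $\mathcal{P}\subseteq\ov{X}$ and $\mathcal{P}^*$ makes sense simultaneously on $X$ and $\ov{X}$. The vanishing $H^i(X,\mathcal{P}^*)=H^i(\ov{X},\mathcal{P}^*)=0$ for $i\ge 1$ (skyscraper sheaf on a $0$-dimensional subscheme) then yields a commutative diagram with exact rows ending in $\bigoplus_P\kappa(P)^*\to H^1(X,\mathscr{O}^*_{X,\mathcal{P}})\to\Pic(X)\to 0$ and the analogue on $\ov{X}$, in which the central vertical map is the identity. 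A direct diagram chase shows that surjectivity of $H^1(X,\mathscr{O}^*_{X,\mathcal{P}})\to H^1(\ov{X},\mathscr{O}^*_{\ov{X},\mathcal{P}})$ is equivalent to surjectivity of $\Pic(X)\to\Pic(\ov{X})$.

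\textbf{Mod-$m$ isomorphism.} For the mod-$m$ statement I would bypass the diagram chase and use a Kummer-style sequence for $\mathscr{O}^*_{X,\mathcal{P}}$ directly. Since $m$ is invertible in $A$, this sheaf is $m$-divisible on $X_{\et}$ (any local $m$-th root of a section trivial at $\mathcal{P}$ can be rescaled by an $m$-th root of unity on a further \'etale cover to make it trivial at $\mathcal{P}$), with kernel of multiplication by $m$ equal to the modified sheaf $\mu_{m,\mathcal{P}}:=\ker(\mu_m\to\mu_m|_{\mathcal{P}})$. The resulting short exact sequence $1\to\mu_{m,\mathcal{P}}\to\mathscr{O}^*_{X,\mathcal{P}}\xrightarrow{m}\mathscr{O}^*_{X,\mathcal{P}}\to 1$ produces
\[
0\to H^1(X,\mathscr{O}^*_{X,\mathcal{P}})/m\to H^2(X,\mu_{m,\mathcal{P}})\to H^2(X,\mathscr{O}^*_{X,\mathcal{P}})[m]\to 0,
\]
and the defining sequence together with $H^i(X,\mathcal{P}^*)=0$ for $i\ge 1$ identifies $H^2(X,\mathscr{O}^*_{X,\mathcal{P}})\cong\Br(X)$. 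Proper base change for torsion \'etale sheaves over the henselian pair $(A,\mathfrak{m})$, combined with invariance under the nilpotent thickening $\ov{X}\hookrightarrow X_0$, gives $H^2(X,\mu_{m,\mathcal{P}})\simto H^2(\ov{X},\mu_{m,\mathcal{P}})$; and Theorem~\ref{thm2p1temp}(i), (iv) (applied, if needed, after passing to a regular proper model of $\Spec A$, which exists by resolution of singularities since $A$ is excellent) gives $\Br(X)[m]\simto\Br(\ov{X})[m]$. The five-lemma then produces the required isomorphism $H^1(X,\mathscr{O}^*_{X,\mathcal{P}})/m\simto H^1(\ov{X},\mathscr{O}^*_{\ov{X},\mathcal{P}})/m$.

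\textbf{Integral surjectivity and main obstacle.} It remains to prove $\Pic(X)\twoheadrightarrow\Pic(\ov{X})$. The standard approach filters $\ov{X}\hookrightarrow X$ through the infinitesimal thickenings $X_n:=X\times_AA/\mathfrak{m}^{n+1}$, together with the square-zero filtration of the nilpotent immersion $\ov{X}\hookrightarrow X_0$. At each step the obstruction to lifting a line bundle lies in $H^2$ of a coherent sheaf on $X_0$, which vanishes because $\dim X_0\le 1$; hence each $\Pic(X_n)\twoheadrightarrow\Pic(X_{n-1})$ and $\Pic(X_0)\twoheadrightarrow\Pic(\ov{X})$. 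Grothendieck's formal existence theorem then yields $\Pic(\hat X)\twoheadrightarrow\Pic(\ov{X})$, where $\hat X=X\times_A\hat A$, and Artin approximation---valid since $A$ is excellent henselian---finally gives $\Pic(X)\twoheadrightarrow\Pic(\hat X)$. I expect this last henselian-to-complete descent to be the main difficulty: it is precisely the step where the present $2$-dimensional henselian setting genuinely extends the complete DVR case of \cite[Prop.~1.7]{Salt07}, in which $A$ was already complete so no approximation was required.
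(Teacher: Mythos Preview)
Your overall strategy is sound and differs from the paper's, but two of your justifications need repair.

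For the mod-$m$ isomorphism, the paper does not introduce a Kummer sequence for $\mathscr{O}^*_{X,\mathcal{P}}$. Instead it identifies the kernel of $H^1(X,\mathscr{O}^*_{X,\mathcal{P}})\to H^1(\ov{X},\mathscr{O}^*_{\ov{X},\mathcal{P}})$ with $B:=\ker(\Pic(X)\to\Pic(\ov{X}))$ and then shows $B/m=0$ via the snake lemma applied to multiplication by $m$ on the sequence $0\to B\to\Pic(X)\to\Pic(\ov{X})\to 0$. The identification of the kernel with $B$ requires that $H^0(X,\mathscr{O}^*)\to H^0(\ov{X},\mathscr{O}^*)$ be surjective, and this is precisely where the hypotheses you never touch---normality of $A$, geometric integrality of the generic fiber, and geometric reducedness of $\ov{X}$---enter: Zariski's connectedness theorem forces $\ov{X}$ to be geometrically connected, whence $H^0(\ov{X},\mathscr{O}^*)=\kappa^*$, onto which $A^*\subseteq H^0(X,\mathscr{O}^*)$ surjects. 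Your direct Kummer approach, once corrected, bypasses this step entirely.

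Your appeal to Theorem~\ref{thm2p1temp}(iv) for $\Br(X)[m]\simto\Br(\ov{X})[m]$ is illegitimate: that statement requires $X$ regular of dimension $2$, whereas here $X$ is only assumed integral, and ``passing to a regular proper model of $\Spec A$'' does not help (it is $X$, not $\Spec A$, that would have to be resolved, thereby changing the object under study). A related slip: your claim $H^i_{\et}(X,\mathcal{P}^*)=0$ for all $i\ge 1$ fails at $i=2$, where one picks up $\bigoplus_P\Br(\kappa(P))$; so $H^2(X,\mathscr{O}^*_{X,\mathcal{P}})$ need not equal $\Br(X)$. Fortunately the snake lemma on your three-term Kummer sequence only requires \emph{injectivity} of $H^2(X,\mathscr{O}^*_{X,\mathcal{P}})[m]\to H^2(\ov{X},\mathscr{O}^*_{\ov{X},\mathcal{P}})[m]$. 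Since $H^1_{\et}(\mathcal{P}^*)=0$ (Hilbert~90), both sides embed in the respective Brauer groups, so it suffices that $\Br(X)[m]\hookrightarrow\Br(\ov{X})[m]$. By the ordinary Kummer sequence and proper base change for $\mu_m$, this injectivity is equivalent to surjectivity of $\Pic(X)/m\to\Pic(\ov{X})/m$, which follows from the $\Pic$ surjectivity you establish separately. So the argument closes, but the dependence on $\Pic$ surjectivity should be made explicit rather than hidden behind an inapplicable citation.

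Finally, for $\Pic(X)\twoheadrightarrow\Pic(\ov{X})$ you assume $A$ excellent (``valid since $A$ is excellent henselian''), but the proposition only assumes $A$ noetherian, normal, and henselian. The paper sidesteps Artin approximation altogether by citing \cite[IV.21.9.12]{EGA4} directly (this is Lemma~\ref{lemma3p13temp}), which covers the general noetherian henselian local base.
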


To prove the above proposition, we need a well-known lemma.

\begin{lemma}\label{lemma3p13temp}
Let $A$ be a $($noetherian$)$ henselian local ring, $X\to\Spec A$ a
proper morphism with closed fiber $X_0$ of dimension $\le 1$,
$m>0$ a positive integer that is invertible in $A$ and
$\ov{X}=(X_0)_{\red}$ the reduced closed subscheme on the closed
fiber $X_0$.

Then the natural map $\Pic(X)\to\Pic(\,\ov{X}\,)$ is surjective
and induces an isomorphism
\[
\Pic(X)/m\simto\Pic(\,\ov{X}\,)/m\,.
\]
\end{lemma}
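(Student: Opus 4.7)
The plan is to factor the natural map as $\Pic(X) \lra \Pic(X_0) \lra \Pic(\ov{X})$ and handle the two pieces separately. Both steps fit the same sheaf-theoretic paradigm: if $Y \hookrightarrow Y'$ is a closed immersion cut out by a (suitably) nilpotent ideal sheaf $\mathcal{I}$, then $1+\mathcal{I}$ admits a finite filtration whose successive quotients are identified, via $1+x \mapsto x$, with coherent $\mathscr{O}_Y$-modules. Since $m$ is invertible on $Y$, multiplication by $m$ acts as an automorphism on each such coherent sheaf and hence on its cohomology; and since $\dim Y \le 1$, Grothendieck vanishing kills cohomology in degree $\ge 2$. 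So each graded piece of $1+\mathcal{I}$ contributes only a uniquely $m$-divisible piece to $H^1$ and nothing to $H^2$.

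For the factor $\Pic(X_0) \lra \Pic(\ov{X})$: apply the paradigm to the tautological sequence $1 \lra 1+\mathcal{N} \lra \mathscr{O}_{X_0}^{*} \lra \mathscr{O}_{\ov{X}}^{*} \lra 1$, where $\mathcal{N}$ is the nilradical of $\mathscr{O}_{X_0}$. One filters by the sheaves $1+\mathcal{N}^{i}$, noting that $(1+x)(1+y) \equiv 1+x+y \pmod{\mathcal{N}^{2i}}$ identifies $(1+\mathcal{N}^i)/(1+\mathcal{N}^{i+1})$ with the coherent $\mathscr{O}_{\ov{X}}$-module $\mathcal{N}^i/\mathcal{N}^{i+1}$ for $i \ge 1$. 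Induction on the filtration yields $H^2(X_0, 1+\mathcal{N}) = 0$ and that $H^1(X_0, 1+\mathcal{N})$ is uniquely $m$-divisible, whence the associated long exact sequence gives both the surjectivity and the isomorphism modulo $m$.

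For the factor $\Pic(X) \lra \Pic(X_0)$: first reduce to the case where $A$ is complete by base change to $\hat A$ (this is the delicate point, to be discussed below). When $A$ is complete, Grothendieck's formal existence theorem identifies $\Pic(X)$ with $\varprojlim_n \Pic(X_n)$, where $X_n := X \times_A \Spec(A/\mathfrak{m}^{n+1})$. The transition map $\Pic(X_{n+1}) \lra \Pic(X_n)$ is analyzed by the same paradigm applied to the thickening $X_n \hookrightarrow X_{n+1}$, whose defining ideal $\mathfrak{m}^{n+1}\mathscr{O}_{X_{n+1}}/\mathfrak{m}^{n+2}\mathscr{O}_{X_{n+1}}$ is a coherent $\mathscr{O}_{X_0}$-module. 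This shows each transition is surjective with uniquely $m$-divisible kernel. The uniform unique $m$-divisibility passes through the inverse limit (providing the Mittag-Leffler condition on $m$-torsion needed to commute $\varprojlim$ with mod-$m$), giving surjectivity of $\Pic(X) \lra \Pic(X_0)$ and the desired isomorphism mod $m$.

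The main obstacle is exactly the reduction from henselian to complete, i.e., verifying that $\Pic(X)/m \simto \Pic(X \times_A \hat A)/m$. This can be handled either by invoking Artin approximation (applied to the Picard functor, which is locally of finite presentation in the situation at hand), or by comparing Kummer long exact sequences on $X$ and $X \times_A \hat A$ combined with proper base change for the constructible sheaf $\mu_m$; the purely sheaf-cohomological manipulations within the two main steps are then routine.
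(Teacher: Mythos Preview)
Your argument is essentially correct, but it takes a considerably longer route than the paper's. The paper dispatches the lemma in two lines: surjectivity of $\Pic(X)\to\Pic(\ov X)$ is quoted directly from \cite[IV.21.9.12]{EGA4} (which already works over any henselian local base, no completeness needed), and the isomorphism modulo~$m$ is read off from the Kummer-sequence diagram
\[
\begin{CD}
0 @>>> \Pic(X)/m @>>> H^2_{\et}(X,\mu_m) @>>> \Br(X)[m]@>>> 0\\
&& @VVV @VV{\cong}V @VVV\\
0 @>>> \Pic(\ov X)/m @>>> H^2_{\et}(\ov X,\mu_m) @>>> \Br(\ov X)[m]@>>> 0
\end{CD}
\]
using proper base change for $\mu_m$ together with the invariance of \'etale cohomology under passage to the reduced subscheme. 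In other words, the device you reach for only at the very end---Kummer plus proper base change, to repair the ``delicate'' henselian-to-complete reduction---is exactly what the paper uses \emph{directly} to compare $X$ with $\ov X$ in a single step, bypassing $X_0$, formal GAGA, and all the inverse-limit bookkeeping.

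Two small points on your version. First, the Artin-approximation alternative requires $A$ to have the approximation property (e.g.\ to be excellent), which the lemma does not assume; so of your two suggested fixes only the Kummer one is available in the stated generality. Second, your reduction step is formulated only for $\Pic/m$, but you also owe surjectivity of $\Pic(X)\to\Pic(X_0)$ over a merely henselian base, and neither Artin approximation (absent excellence) nor the mod-$m$ comparison supplies that; the clean patch is again to invoke \cite[IV.21.9.12]{EGA4}, at which point most of the coherent-cohomology machinery becomes superfluous.
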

\begin{proof}
The surjectivity of $\Pic(X)\to\Pic(\,\ov{X}\,)$ follows from
\cite[IV.21.9.12]{EGA4}. Then the commutative diagram with exact rows, which comes from the Kummer sequence,
\[
\begin{CD}
0 @>>> \Pic(X)/m @>>> H^2_{\et}(X\,,\,\mu_m) @>>> \Br(X)[m]
@>>>
0\\
&& @VVV  @VVV  @VVV \\
0 @>>> \Pic(\,\ov{X}\,)/m @>>> H^2_{\et}(\,\ov{X}\,,\,\mu_m)@>>>
\Br(\ov{X})[m]@>>> 0
\end{CD}
\]yields the desired isomorphism
$\Pic(X)/m\simto\Pic(\,\ov{X}\,)/m$, since the vertical map in
the middle is an isomorphism by proper base change (cf. \cite[p.224, Coro. 2.7]{Mil80}), noticing also that any scheme $Y$
has the same \'etale cohomology with value in a commutative \'etale group scheme as
its reduced closed subscheme $Y_{\red}$ (cf. \cite[Exp.$\;$VIII, Coro.$\;$1.2]{SGA}).
\end{proof}

\begin{proof}[Proof of Prop.$\;\ref{prop3p12temp}$]
Consider the commutative diagram with exact rows
\[
\begin{CD}
H^0(X\,,\,\mathscr{O}^*) @>{\varphi}>> H^0(X\,,\,\mathcal{P}^*) @>>>
H^1(X\,,\,\mathscr{O}_{X\,,\,\mathcal{P}}^*) @>>> \Pic(X) @>>> 0\\
@V{\pi}VV @VV{\mathrm{id}}V @VVV @VVV \\
H^0(\ov{X}\,,\,\mathscr{O}^*) @>{\theta}>> H^0(\ov{X}\,,\,\mathcal{P}^*) @>>>
H^1(\ov{X}\,,\,\mathscr{O}_{\ov{X}\,,\,\mathcal{P}}^*) @>>> \Pic(\,\ov{X}\,)
@>>> 0
\end{CD}
\]from which the surjectivity of $H^1(X\,,\,\mathscr{O}_{X\,,\,\mathcal{P}}^*)\to H^1(\ov{X}\,,\,\mathscr{O}_{\ov{X}\,,\,\mathcal{P}}^*)$
is immediate, since $\Pic(X)\to\Pic(\ov{X})$ is surjective by Lemma$\;$\ref{lemma3p13temp}. Put $M:=\mathrm{Im}(\varphi)\subseteq N:=\mathrm{Im}(\theta)$.

We claim that $\pi$ is surjective. Indeed, by Zariski's connectedness
theorem (\cite[III.4.3.12]{EGA3}), the hypotheses that $A$ is normal
and the generic fiber of $X\to\Spec A$ is geometrically integral
imply that the closed fiber $X_0$ is geometrically connected.
The reduced closed fiber $\ov{X}=(X_0)_{\red}$ is geometrically
connected as well. Since $\ov{X}$ is assumed to be geometrically
reduced, we have $H^0(\,\ov{X}\,,\,\mathscr{O}^*)=\kappa^*$. Thus, the map
$\pi: H^0(X\,,\,\mathscr{O}^*)\to H^0(\,\ov{X}\,,\,\mathscr{O}^*)$ is clearly
surjective since $A^*\subseteq H^0(X\,,\,\mathscr{O}^*)$.

Now our claim shows that $M=N$ and then it follows that
\[
\mathrm{Ker}\left(H^1(X\,,\,\mathscr{O}_{X\,,\,\mathcal{P}}^*)\to
H^1(\,\ov{X}\,,\,\mathscr{O}_{\ov{X}\,,\,\mathcal{P}}^*)\right)\cong
B:=\mathrm{Ker}(\Pic(X)\to\Pic(\,\ov{X}\,))
\]It's sufficient to show $B/m=0$. From the commutative diagram with
exact rows
\[
\begin{CD}
0 @>>> H^0(X\,,\,\mathscr{O}^*)/m @>>> H^1_{\et}(X\,,\,\mu_m) @>>>
\Pic(X)[m] @>>> 0\\
&& @VV{\pi}V @VVV @VVV \\
0 @>>> H^0(\,\ov{X}\,,\,\mathscr{O}^*)/m @>>>
H^1_{\et}(\,\ov{X}\,,\,\mu_m) @>>> \Pic(\,\ov{X}\,)[m] @>>> 0
\end{CD}
\]
it follows that $\Pic(X)[m]\cong\Pic(\,\ov{X}\,)[m]$, since the
vertical map in the middle is an isomorphism by proper base change
and the left vertical map is already shown to be surjective. Now
applying the snake lemma to the following commutative diagram
\[
\begin{CD}
0 @>>> B @>>> \Pic(X)@>>>
\Pic(\ov{X}) @>>> 0\\
&& @VV{m}V @VV{m}V @VV{m}V \\
0 @>>> B@>>> \Pic(X)@>>> \Pic(\ov{X}) @>>> 0
\end{CD}
\]
 and
using Lemma$\;$\ref{lemma3p13temp}, we easily find $B/m=0$, which
completes the proof.
\end{proof}

The following result is proved in \cite[Thm.$\;$7.13]{Salt08} in the case where $\mu_q\subseteq F$ without assuming the residue field $\kappa$
perfect. It says essentially that the conclusion of Theorem$\;$\ref{thm3p3temp} can be strengthened for Brauer classes of prime order.

\begin{prop}\label{prop3p14temp}
Let $A$ be a $($noetherian$)$ henselian local domain with
residue field $\kappa$, $q$ a prime number unequal to the
characteristic of $\kappa$, and $X$ a regular excellent surface equipped with a proper dominant morphism $X\to\Spec A$ whose closed fiber is of
dimension $\le 1$. Let $F$ be the
function field of $X$ and $\alpha\in\Br(F)[q]$. Assume that $\kappa$ is perfect and that $\alpha$ has index $q$.

Then there is some $g\in F^*$ such that the field extension
$M/F:=F(\sqrt[q]{g})/F$ splits all ramification of $\alpha$ over $X$,
i.e., $\alpha_M\in\Br_{nr}(M/X)$.
\end{prop}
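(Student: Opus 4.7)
The plan is to adapt the proof of \cite[Thm.$\;$7.13]{Salt08} from the $p$-adic function field setting to our henselian local surface context, substituting Proposition$\;$\ref{prop3p12temp} for the arithmetic-case Picard-type result \cite[Prop.$\;$1.7]{Salt07}. First I reduce to the case $\mu_q \subseteq F$ by standard arguments involving the prime-to-$q$ extension $F(\mu_q)/F$; the key point is that splitting ramification of a $q$-torsion Brauer class is preserved after passing through a prime-to-$q$ extension and can be descended back to $F$ by a transfer/conjugate-product construction. After a further sequence of blow-ups combined with Proposition$\;$\ref{prop3p9temp}, I may assume $\Ram_X(\alpha)$ is an snc divisor on $X$ with no cool points and no chilly loops. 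The index-$q$ hypothesis, via the results quoted right after Definition$\;$\ref{defn3p7temp}, guarantees that there are no hot points on $X$ and that the residual class at each ramified curve is split by its ramification.

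For each irreducible component $C$ of $\Ram_X(\alpha)$, pick $u_C \in \kappa(C)^{*}$ such that $\ram_C(\alpha) = \bigl(\kappa(C)(\sqrt[q]{u_C})/\kappa(C), \sigma_C\bigr)$. The key elementary observation is that $F(\sqrt[q]{g})/F$ kills $\ram_C(\alpha)$ if and only if either $v_C(g) \not\equiv 0 \pmod{q}$, in which case the degree-$q$ Kummer extension is totally ramified at $v_C$ and the $q$-torsion ramification class dies, or else $v_C(g) \equiv 0 \pmod{q}$ and the specialization of $g \pi_C^{-v_C(g)}$ at the generic point of $C$ lies in the order-$q$ subgroup of $\kappa(C)^{*}/\kappa(C)^{*q}$ generated by $u_C$. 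Thus the task is to construct a single element $g \in F^{*}$ prescribing appropriate valuations along the components of $\Ram_X(\alpha)$ and appropriate residues modulo $q$-th powers at suitably chosen closed points.

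To realize this I take $\mathcal{P}$ to be the finite set consisting of all nodal points of $\Ram_X(\alpha)$ together with one smooth closed point on each irreducible component, chosen away from the rest of $\Ram_X(\alpha)$. The desired data can be encoded as a class in the right-hand side of Proposition$\;$\ref{prop3p11temp}, i.e., a pair (divisor, specialization values at points of $\mathcal{P}$) modulo the image $\phi(K^{*}_{\mathcal{P}})$. Proposition$\;$\ref{prop3p12temp} identifies $H^{1}(X, \mathscr{O}^{*}_{X,\mathcal{P}})/q$ with $H^{1}(\ov{X}, \mathscr{O}^{*}_{\ov{X},\mathcal{P}})/q$, so it suffices to realize the target class on the reduced closed fiber $\ov{X}$; any such realization lifts back through Proposition$\;$\ref{prop3p11temp} to give an element $g \in K^{*}_{\mathcal{P}} \subseteq F^{*}$ having the sought properties, up to modification by a $q$-th power in $F^{*}$.

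The main obstacle is verifying the combinatorial compatibility on $\ov{X}$, a projective curve over the perfect residue field $\kappa$. The absence of cool and hot points together with the chilly-loop-free condition ensure that the prescriptions at the nodal points assemble into a consistent system: at each chilly point $P = C_1 \cap C_2$ with chilly coefficient $s = s(C_2/C_1)$, one needs $u_{C_2}(P) \equiv u_{C_1}(P)^{s} \pmod{\kappa(P)^{*q}}$, and the absence of chilly loops is precisely what permits simultaneously adjusting the $u_C$ along each component to meet all such conditions. The fact that the residual classes are split by the ramification is exactly the compatibility needed at every nodal point. Once this discrete data is consistently realized on $\ov{X}$, Proposition$\;$\ref{prop3p12temp} yields the required $g$, and a direct local check at each component of $\Ram_X(\alpha)$ shows that $F(\sqrt[q]{g})/F$ kills every $\ram_C(\alpha)$, completing the proof.
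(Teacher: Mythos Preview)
Your proposal has two genuine gaps.

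First, the reduction to $\mu_q\subseteq F$ is not justified. You assert that a ``transfer/conjugate-product construction'' lets you descend a Kummer generator $g'\in F(\mu_q)^*$ to some $g\in F^*$, but this is not standard and you give no argument. The norm $N_{F(\mu_q)/F}(g')$ need not have the right valuations modulo $q$ along the ramification components (distinct conjugate valuations of $g'$ can sum to a multiple of $q$), and there is no reason the class of $g'$ in $L^*/L^{*q}$ should be Galois-invariant. Indeed, if this reduction were routine the proposition would be immediate from \cite[Thm.$\;$7.13]{Salt08} and the paper would not need a separate proof. The paper instead argues directly without $\mu_q$, writing each $\ram_{C_i}(\alpha)$ as an abstract cyclic character $(L_i/\kappa(C_i),\sigma_i)$; the perfectness of $\kappa$ enters only through the geometric-reducedness hypothesis in Prop.~\ref{prop3p12temp}.

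Second, and more seriously, your final verification only checks that $F(\sqrt[q]{g})/F$ kills $\ram_C(\alpha)$ at the \emph{generic point} of each component $C$. But $\alpha_M\in\Br_{nr}(M/X)$ requires unramifiedness at every discrete valuation of $M$ lying over $X$, including those centered at closed points. The paper's proof devotes most of its length to exactly this: after constructing $g=f\pi$ it runs a case analysis over curve points, cold points, and chilly points, invoking \cite[Thm.$\;$3.4, Prop.$\;$3.5, Prop.$\;$3.9(a), Prop.$\;$3.10(c)]{Salt07}. These checks require specific local control on $g$ --- for instance that $f(P)\in\kappa(P)^{*q}$ at every nodal point, and that at chilly points $g$ has the local shape $u\pi_1^{s_1}\pi_2^{s_2}$ with $s_2/s_1$ matching the chilly coefficient --- none of which your construction supplies. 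The paper obtains this control by first invoking \cite[Thm.$\;$4.6]{Salt07} to produce an element $\pi$ with properties (P.1)--(P.5), in particular trivializing the residual classes $\beta_{C_i}$, and only then using Prop.~\ref{prop3p12temp} to find a correcting factor $f$ whose divisor is a $q$-th multiple away from a controlled piece and whose values at nodal points are $q$-th powers. Your direct attempt to ``encode the desired data as a class in $H^1(X,\mathscr{O}^*_{X,\mathcal{P}})$'' bypasses this structure and leaves the closed-point verification unaddressed; the remark that ``residual classes split by the ramification is exactly the compatibility needed at every nodal point'' conflates a global condition on each $C_i$ with the local verification at cold points, where it is used quite differently (to control the ramification of $\beta_{C_i,M}$ at $P$).
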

\begin{proof}Replacing $A$ by its normalization if necessary, we may assume that $A$ is normal.
Let $\mathrm{Ram}_{X}(\alpha)=\sum C_i$ be the ramification divisor of
$\alpha$ on $X$ and let $\ov{X}=(X_0)_{\red}$ be the reduced closed
subscheme on the closed fiber $X_0$. After a finite number of
blow-ups, we may assume that $X_0$ is purely of dimension 1, that
$B:=(\mathrm{Ram}_{X}(\alpha)\cup\ov{X})_{\red}$ is an snc divisor, and
that there are no cool points or chilly loops for $\alpha$ on $X$ (cf.
Prop.$\;$\ref{prop3p9temp}).  Write
\[
(L_i/\kappa(C_i)\,,\,\sigma_i)=\ram_{C_i}(\alpha)\in H^1(\kappa(C_i)\,,\,\mathbb{Z}/q)
\]for the ramification of $\alpha$ at $C_i$. By the
assumption on the index, there are no hot points for $\alpha$ on $X$
and the residual classes of $\alpha$ at $C_i$ are split by the ramification $L_i/\kappa(C_i)$ for every
$i$ (cf. \cite[Prop.$\;$0.5 and Prop.$\;$3.10 (d)]{Salt07}). Using \cite[Thm.$\;$4.6]{Salt07}, we can find $\pi\in F^*$ having
the following properties:

(P.1) $v_{C_i}(\pi)=s_i$ is not divisible by $q$.

(P.2) If $P$ is a chilly point in the intersection of $C_i$ and
$C_j$, then the coefficient $s(C_j/C_i)$ of $P$ with
respect to $C_i$ (cf. Definition$\;$\ref{defn3p7temp}) satisfies $s(C_j/C_i)s_i=s_j\in\mathbb{Z}/q\mathbb{Z}$.

(P.3) The divisor $E:=\di_{X}(\pi)-\sum s_iC_i$ does not contain
any singular points of $B=(\mathrm{Ram}_{X}(\alpha)\cup
X_0)_{\red}$ or any irreducible component of $B$ in its support.

(P.4) With respect to $F':=F(\pi^{1/q})$, the residue Brauer
classes $\beta_{C_i\,,\,F'}=\beta_{C_i\,,\,\pi}\in\Br(\kappa(C_i))$ of $\alpha$ at all the $C_i$ are trivial.

(P.5) For any closed point $P$ in the intersection of $E$ and some
$C_i$, the intersection multiplicity $(C_i\cdot E)_P$ is a multiple
of $q$ if the corresponding field extension $L_i/\kappa(C_i)$ is
nonsplit at $P$.

Let $\gamma\in\Pic(X)$ be the class of $\mathscr{O}_{X}(-E)$ and let
$\ov{\gamma}\in\Pic(\ov{X})$ be its canonical image. By property (P.3), $E$
and $\ov{X}$ only intersect in nonsingular points of $\ov{X}$.
So we can represent $\ov{\gamma}$ as a Cartier divisor on $\ov{X}$
using the intersection of $-E$ and $\ov{X}$. This divisor can be
chosen in the following form
\begin{equation}\label{eq1temp}
\sum qn_jQ_j+\sum n_l Q'_l\,,
\end{equation}
where $Q_j\,,\,Q'_l$ are nonsingular points on $\ov{X}$, and for
each $Q'_l$, one has either $Q'_l\notin\mathrm{Ram}_{X}(\alpha)$ or
$Q'_l\in C_i$ for exactly one $C_i$ and the corresponding field
extension $L_i/\kappa(C_i)$ is split at $Q'_l$ (by property (P.5)).

By \cite[IV.21.9.11 and IV.21.9.12]{EGA4}, there exists a prime
divisor $E'_l$ on $X$ such that $E'_l|_{\ov{X}}=Q'_l$ as Cartier
divisors on $\ov{X}$. Note that $E'_l\nsubseteq
\mathrm{Ram}_{X}(\alpha)$ because otherwise $Q'_l\in E'_l\cap
\ov{X}$ would be a singular point of $E'_l\cup \ov{X}\subseteq
B=(\mathrm{Ram}_{X}(\alpha)\cup\ov{X})_{\red}$. Set $E'=-E-\sum
n_lE'_l$. Let $\mathcal{P}$ be the set of all singular points of $B$ (in
particular, $\mathcal{P}$ contains all nodal points for $\alpha$).

Let $\gamma'\in H^1(X\,,\,\mathscr{O}_{X\,,\,\mathcal{P}}^*)$ be the element
represented by the pair
\[
(E'\,,\,\oplus 1)\in H^0_{\mathcal{P}}(X\,,\,\mathscr{K}^*/\mathscr{O}^*)\oplus
\left(\oplus_{P\in\mathcal{P}}\kappa(P)^*\right)
\] via the isomorphism
\[
H^1(X\,,\,\mathscr{O}_{X\,,\,\mathcal{P}}^*)\cong
\frac{H^0_{\mathcal{P}}(X\,,\,\mathscr{K}^*/\mathscr{O}^*)\oplus\left(\oplus_{P\in\mathcal{P}}\kappa(P)^*\right)}{K^*_{\mathcal{P}}}
\]in Prop.$\;$\ref{prop3p11temp}. (Here $E'\in H^0_{\mathcal{P}}(X\,,\,\mathscr{K}^*/\mathscr{O}^*)$ since by the choice of $\pi$,
$E$ does not contain any singular points of
$B=(\mathrm{Ram}_{X}(\alpha)\cup \ov{X})_{\red}$.) The image
$\ov{\gamma}'$ of $\gamma'$ in
$H^1(\ov{X}\,,\,\mathscr{O}^*_{\ov{X}\,,\,\mathcal{P}})$ lies in $q.
H^1(\ov{X}\,,\,\mathscr{O}^*_{\ov{X}\,,\,\mathcal{P}})$ by the expression
\eqref{eq1temp}.

From Prop.$\;$\ref{prop3p12temp} it follows that $\gamma'\in q.
H^1(X\,,\,\mathscr{O}_{X\,,\,\mathcal{P}}^*)$. Thus, by Prop.$\;$\ref{prop3p11temp},
there is a divisor $E''\in H^0_{\mathcal{P}}(X\,,\,\mathscr{K}^*/\mathscr{O}^*)$, elements
$a(P)\in \kappa(P)^*$ for each $P\in\mathcal{P}$, and $f\in F^*$ such that
$f$ is a unit at every $P\in \mathcal{P}$, $\di_{X}(f)=E'+qE''$ and
$f(P)=a(P)^q\,,\,\forall\; P\in\mathcal{P}$. We now compute
\begin{equation}\label{eq2temp}
\begin{split}
\di_{X}(f\pi)&=\di_{X}(f)+\di_{X}(\pi)=(E'+qE'')+\left(\sum
s_iC_i+E\right)\\
&=-E-\sum n_lE'_l+qE''+E+\sum s_iC_i\\
&=\sum s_iC_i+\left(qE''-\sum n_lE'_l\right)\\
&=:\sum s_iC_i+\sum \tilde{n}_j D_j\,.
\end{split}
\end{equation}Note that for any $D_j$, the following properties hold:

(P.6) $D_j$ can only intersect $B$ in nonsingular points of $B$.

(P.7) If $q\nmid \tilde{n}_j$, then $D_j\in \set{E'_l}$, so that
either $D_j\cap\mathrm{Ram}_{X}(\alpha)=\emptyset$ or $D_j\cap
\mathrm{Ram}_{X}(\alpha)$ consists of a single point $P$ which lies
on one $C_i$ and the corresponding field extension $L_i/\kappa(C_i)$
splits at $P$.

Now we claim that $g=f\pi$ satisfies the required property. That is,
putting $M=F((f\pi)^{1/q})$, $\alpha_M\in\Br(M)$ is unramified at every
discrete valuation of $M$ that lies over a point or a curve on
$X$.

Consider a discrete valuation of $M$ lying over some
$v\in\Omega(F/X)$.

If $v$ is centered at some $C_i$, then $M/F$ is totally ramified at
$v$ since the coefficient $s_i$ of $f\pi$ at $C_i$ is prime to $q$
(cf. \eqref{eq2temp}), hence in particular $M/F$ splits the
ramification of $\alpha$ at $v$. As $\alpha$ is unramified at all other
curves on $X$, we may restrict to the case where $v$ is centered at
a closed point $P$ of $X$. By \cite[Thm.$\;$3.4]{Salt07}, we can
also ignore distant points and curve points $P\in C_i$ where
$L_i/\kappa(C_i)$ splits at $P$.

Now assume that $P$ is a curve point lying on some $C_1\in\set{C_i}$
where the corresponding field extension $L_1/\kappa(C_1)$ is
nonsplit at $P$. By property (P.7), the only curves other than $C_1$
in the support of $\di_{X}(f\pi)$ that can pass through $P$ have
coefficients a multiple of $q$. Therefore, in $R_P=\mathscr{O}_{X\,,\,P}$ we
have $f\pi=u\pi_1^{s_1}\delta^q$ with $u\in R_P^*$, $\pi_1\in R_P$ a
uniformizer of $C_1$ at $P$ and $\delta\in R_P$ prime to $\pi_1$.
Using \cite[Prop.$\;$3.5]{Salt07}, we then conclude that $M/F$
splits all ramification of $\alpha$ at $v$.

Recall that we have assumed there are no cool points or hot points
for $\alpha$. So in the only remaining cases, $P$ is either a cold
point or a chilly point.

Assume first that $P$ is a cold point for $\alpha$. By property (P.4)
and \cite[Coro.$\;$0.7]{Salt07}, the residual class
$\beta_{C_i\,,\,M}$ of $\alpha$ at any $C_i$ with respect to
$M=F((f\pi)^{1/q})$ is given by the class of a cyclic algebra $
(\chi_i\,,\,\bar{f}^{-t})$, where
\[
\chi_i=(L_i/\kappa(C_i)\,,\,\sigma_i)=\ram_{C_i}(\alpha)\in H^1(\kappa(C_i)\,,\,\mathbb{Z}/q)\,,
\]
$t$ is an integer prime to $q$ and $\ov{f}$ denotes the canonical
image of $f$ in $\kappa(C_i)$. Since $f$ is a $q$-th power in
$\kappa(P)$ by the choice, it follows easily that
$\beta_{C_i\,,\,M}$ is unramified at $P$. In the local ring
$R_P=\mathscr{O}_{X,\,P}$, we have $f\pi=u_P\pi_1^{s_1}\pi_2^{s_2}$ for some
$u_P\in R^*_P$ by \eqref{eq2temp} and property (P.6).  Hence, by
\cite[Prop.$\;$3.10 (c)]{Salt07}, $M$ splits all ramification of
$\alpha$ at $v$.

Finally,  consider the case where $P$ is a chilly point. Let
$C_1,\,C_2\in\set{C_i}$ be the two different irreducible components
of $\mathrm{Ram}_{X}(\alpha)$ through $P$ and let $\pi_1,\,\pi_2\in
R_P=\mathscr{O}_{X\,,\,P}$ be uniformizers of $C_1,\,C_2$ at $P$. Again by
\eqref{eq2temp} and property (P.6), we have
$f\pi=u_P\pi_1^{s_1}\pi_2^{s_2}$ for some $u_P\in R^*_P$. Let
$s=s(C_2/C_1)$ be the coefficient of $P$ with respect to $C_1$.
Using property (P.2), we find that $M=F((f\pi)^{1/q})$ may be
written in the form $M=F((\pi'_1\pi_2^{s})^{1/q})$, where $\pi'_1\in
R_P$ is a uniformizer of $C_1$ at $P$. Thus, by \cite[Prop.$\;$3.9
(a)]{Salt07}, $M/F$ splits all ramification of $\alpha$ at $v$, which
completes the proof.
\end{proof}

\begin{coro}\label{coro3p15temp}
Let $A$ be a $($noetherian$)$ henselian local domain with residue
field $\kappa$, $q$ a prime number unequal to the characteristic of
$\kappa$, and $X$ a regular excellent surface equipped with a proper
dominant morphism $X\to\Spec A$ whose closed fiber is of dimension $\le 1$. Let $F$ be the function field of
$X$ and $\alpha\in\Br(F)[q]$. Assume that $\kappa$ is a $B_1$ field and
that $\alpha$ has index $q$.

If either $\mu_q\subseteq F$ or $\kappa$ is perfect, then $\alpha$ is
represented by a cyclic algebra of degree $q$.
\end{coro}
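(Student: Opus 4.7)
The plan is to produce a Kummer element $g \in F^*$ such that $M := F(\sqrt[q]{g})$ actually splits $\alpha$ (not merely its ramification), and then to conclude via the standard correspondence between cyclic Kummer extensions and cyclic algebras.

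First, apply Proposition \ref{prop3p14temp} (when $\kappa$ is perfect) or \cite[Thm.$\;$7.13]{Salt08} (cited just before Proposition \ref{prop3p14temp}, applicable when $\mu_q \subseteq F$) to obtain $g \in F^*$ such that $M := F(\sqrt[q]{g})$ satisfies $\alpha_M \in \Br_{nr}(M/X)$.

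Next, I would upgrade ``splitting all ramification'' to ``splitting $\alpha$''. Let $Y$ be the normalization of $X$ in $M$. Since $X$ is excellent and $M/F$ is finite separable, $Y \to X$ is finite, and by Lipman's resolution of excellent surfaces I may find a regular excellent surface $Y'$ together with a proper birational morphism $Y' \to Y$. The composite $Y' \to \Spec A$ is proper; its closed fiber has dimension $\le 1$, since the finite map $Y \to X$ preserves fiber dimensions while the blow-ups constituting $Y' \to Y$ adjoin only copies of $\mathbb{P}^1$ above closed points. Corollary \ref{coro2p5temp} then yields $\Br(Y') = 0$. Because $Y' \to X$ is proper, the valuative criterion gives $\Omega(M/Y') = \Omega(M/X)$, hence
\[
\Br_{nr}(M/X) \;=\; \Br_{nr}(M/Y') \;\subseteq\; \Br(Y') \;=\; 0,
\]
the middle inclusion coming from the observation at the end of $\S$\ref{sec2p1}. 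Therefore $\alpha_M = 0$.

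If $\alpha = 0$ the statement is trivial; otherwise $g \notin F^{*q}$ and $[M:F] = q$. In the case $\mu_q \subseteq F$, the extension $M/F$ is cyclic Galois of degree $q$, and the correspondence recalled at the end of $\S$\ref{sec2p1} writes $\alpha$ as the symbol $(g, b)_{\xi_q}$ for some $b \in F^*$, that is, as a cyclic algebra of degree $q$.

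For the remaining case, $\kappa$ perfect but $\mu_q \not\subseteq F$, the Kummer extension $M/F$ is no longer Galois, which is the principal technical obstacle. I would address this by passing to $F' := F(\mu_q)$, whose degree $d := [F':F]$ divides $q - 1$ and so is coprime to $q$. Base-changing the geometric data compatibly to $F'$ places us in the previously handled situation ($\mu_q \subseteq F'$, with henselian base, $B_1$ residue field, regular proper surface model, and $\alpha_{F'}$ still of index $q$ since $\gcd(d, q) = 1$), so $\alpha_{F'}$ is cyclic of degree $q$ over $F'$. A descent argument employing the restriction--corestriction formula together with $\gcd(d, q) = 1$ and the facts that $\alpha$ has period and index $q$ then transfers cyclicity back to $F$. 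This descent is the most delicate step of the proof; the geometric heavy lifting is carried by Proposition \ref{prop3p14temp} and the two preceding paragraphs.
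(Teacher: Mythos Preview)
Your argument through $\alpha_M = 0$ matches the paper's proof (the paper compresses the verification $\Br_{nr}(M/X) = 0$ into a reference to the proof of Theorem~\ref{thm3p6temp}; your expansion via normalization and resolution is exactly what that reference unpacks to), and the case $\mu_q \subseteq F$ is handled identically.

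The gap is the final step when $\mu_q \not\subseteq F$. A minor point first: redoing the geometry over $F' = F(\mu_q)$ is unnecessary, since you already have $\alpha_M = 0$ over $F$, whence $\alpha_{F'}$ is split by the cyclic extension $F'(\sqrt[q]{g})/F'$ and so $\alpha_{F'} = \chi' \cup (b')$ for some $b' \in (F')^*$, with $\chi' \in H^1(F',\mathbb{Z}/q)$ the associated character. The substantive problem is the descent. For the projection formula to turn $\mathrm{Cor}_{F'/F}(\chi' \cup (b'))$ into a cyclic algebra over $F$, one would need $\chi'$ to be the restriction of some $\chi \in H^1(F,\mathbb{Z}/q)$. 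But the Galois closure of $M/F$ is $F(\sqrt[q]{g},\mu_q)$ with group $\mathbb{Z}/q \rtimes \mathbb{Z}/d$ (faithful action), whose abelianization is $\mathbb{Z}/d$; there is no cyclic quotient of order $q$, hence no such $\chi$. A bare restriction--corestriction argument therefore does not transfer cyclicity back to $F$, and while you correctly flag this step as delicate, you give no indication of how to actually carry it out.

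The paper closes the argument in one stroke by citing a theorem of Albert (\cite[Prop.~0.1]{Salt07}): a central simple algebra of prime degree $q$ over $F$ that is split by a radical extension $F(\sqrt[q]{g})$ is cyclic, with no hypothesis on roots of unity. Once $\alpha_M = 0$ is known, this handles both cases uniformly. The proof of Albert's theorem is itself a descent from $F(\mu_q)$, but a subtler one than the projection formula; you should invoke the theorem directly rather than attempt to reconstruct it.
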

\begin{proof}
If $\mu_q\subseteq F$, we may use \cite[Thm.$\;$7.13]{Salt08} to
find a degree $q$ Kummer extension $M/F=F(\sqrt[q]{g})/F$ that
splits all ramification of $\alpha$ over $X$. If $\kappa$  is perfect,
such an extension exists by Prop.$\;$\ref{prop3p14temp}. As in the
proof of Thm.$\;$\ref{thm3p6temp}, we have $\Br_{nr}(M/X)=0$ by
Coro.$\;$\ref{coro2p5temp}. Hence, $\alpha_M=0\in\Br(M)$. Then by a
theorem of Albert (cf. \cite[Prop.$\;$0.1]{Salt07}), which is rather
immediate when assuming the existence of a primitive $q$-th root of
unity, $\alpha$ is represented by a cyclic algebra of degree $q$.
\end{proof}

Recall that $R$ always denotes a 2-dimensional, henselian, excellent
local domain with fraction field $K$ and residue field $k$. Applying Coro.$\;$\ref{coro3p15temp} to a regular proper model
$\mathcal{X}\to\Spec R$ yields the folllowing.

\begin{thm}\label{thm3p16temp}
Assume that the residue field $k$ of $R$ has property $B_1$. Let $q$
be a prime number unequal to the characteristic of $k$.

If either $\mu_q\subseteq R$ or $k$ is perfect, then any Brauer class $\alpha\in\Br(K)[q]$
of index $q$ is represented by a cyclic algebra of degree $q$.
\end{thm}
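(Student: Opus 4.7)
The plan is to deduce this theorem as an immediate application of Corollary \ref{coro3p15temp} to a suitable regular proper model of $\Spec R$. By resolution of singularities for excellent surfaces (Lipman \cite{Lip75}, \cite{Lip78}), there exists a regular proper model $\mathcal{X}\to\Spec R$, and as noted in the remark following Theorem \ref{thm3p3temp} this morphism is automatically projective by \cite[IV.21.9.13]{EGA4}. The function field of $\mathcal{X}$ is $K$, and the closed fiber of $\mathcal{X}\to\Spec R$ has dimension $\le 1$ since the morphism is proper and birational between $2$-dimensional integral schemes.

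With $A=R$ and $X=\mathcal{X}$, I would then check that all the hypotheses of Corollary \ref{coro3p15temp} are in force: $R$ is a henselian local domain whose residue field $k$ is $B_1$ by assumption; the scheme $\mathcal{X}$ is a regular excellent surface (since $R$ is excellent) equipped with a proper dominant morphism to $\Spec R$ whose closed fiber has dimension $\le 1$; the prime $q$ is distinct from the characteristic of $k$; and the class $\alpha\in\Br(K)[q]$ has index $q$. The alternative ``$\mu_q\subseteq F$ or $\kappa$ is perfect'' required by Corollary \ref{coro3p15temp} translates directly from the alternative ``$\mu_q\subseteq R$ or $k$ is perfect'' of the theorem, using that $\mu_q\subseteq R$ forces $\mu_q\subseteq K$. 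Invoking Corollary \ref{coro3p15temp} then produces the desired cyclic presentation of $\alpha$ of degree $q$.

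There is no genuine obstacle to surmount in this deduction: all the substantive work has already been carried out, in Proposition \ref{prop3p14temp} (the delicate construction of a Kummer extension splitting all ramification via the choice of $\pi$ with properties (P.1)--(P.5), the application of Proposition \ref{prop3p12temp} to produce the auxiliary factor $f$, and the case-by-case verification at distant, curve, cold, and chilly points), and in the passage from an unramified class to a cyclic algebra via Albert's theorem in the proof of Corollary \ref{coro3p15temp}. The present theorem is therefore essentially a repackaging of those results in the global henselian setting.
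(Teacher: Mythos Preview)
Your proposal is correct and follows exactly the paper's approach: the paper derives this theorem in a single sentence by applying Corollary~\ref{coro3p15temp} to a regular proper model $\mathcal{X}\to\Spec R$. Your additional remarks about the verification of hypotheses and the location of the substantive work are accurate and simply spell out what the paper leaves implicit.
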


\begin{remark}\label{remark3p17temp}
(1) In Prop.$\;$\ref{prop3p14temp} or Coro.$\;$\ref{coro3p15temp},
according to the above proof, if we assume the morphism $X\to\Spec
A$ is chosen such that $\Ram_X(\alpha)$ is an snc divisor and that
$\alpha$ has no cool points or chilly loops on $X$, then the hypothesis
that $\alpha$ has index $q$ may be replaced by the weaker condition
that all the residual classes $\beta_C$ of $ \alpha$ at all components
$C$ of $\Ram_X(\alpha)$ are split by the ramification.

(2) Similarly, let $\mathcal{X}\to\Spec R$ be a regular proper model such
that $\Ram_{\mathcal{X}}(\alpha)$ is an snc divisor and that $\alpha\in\Br(K)[q]$
has no cool points or chilly loops on $\mathcal{X}$. Then the conclusion in
Thm.\ref{thm3p16temp} remains valid if
instead of assuming $\alpha$ has index $q$ we only require that all the
residual classes of $\alpha$ at all components of $\Ram_{\mathcal{X}}(\alpha)$ are
split by the ramification.

(3) In the context of Thm.$\;$\ref{thm3p16temp}, if $k$ is a
separably closed field, \cite[Thm.$\;$2.1]{CTOP} proved a stronger
result: any Brauer class $\alpha\in\Br(K)$ of order $n$ which is
invertible in $R$ (but not necessarily a prime) is represented by a
cyclic algebra of index $n$.
\end{remark}

\subsection{Some corollaries}\label{sec3p4}

As applications of results obtained previously, we give a criterion for $\alpha\in\Br(K)[q]$ to have index $q$. Also, we will prove Thm.$\;$\ref{thm1p3temp}.

\

We begin with the following easy and standard fact.

\begin{lemma}\label{lemma3p18temp}
Let $R$ be a $2$-dimensional, henselian, excellent local domain with fraction field $K$ and residue field $k$. Let $\mathcal{X}\to\Spec R$
be a regular proper model. Then for any curve $C\subseteq \mathcal{X}$, one has either

$(\mathrm{i})$ $C$ is a proper curve over $k$;

or

$(\mathrm{ii})$ $C=\Spec B$, where $B$ is a domain whose normalization $B'$ is a henselian discrete valuation ring with residue field finite over $k$.
\end{lemma}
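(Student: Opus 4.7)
The plan is to analyze the scheme-theoretic image of $C$ in $\Spec R$ under the proper morphism $\mathcal{X}\to\Spec R$. This image is closed, irreducible, and of dimension at most $\dim C=1$, so it is either the closed point $\{\mathfrak{m}\}$ of $\Spec R$ or else $V(\mathfrak{p})$ for some height-one prime $\mathfrak{p}\subset R$. In the first case, $C$ lies in the closed fiber $\mathcal{X}\times_R k$, which is proper over $k$, giving case (i); the rest of the argument will treat the second case.

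Set $A:=R/\mathfrak{p}$, a $1$-dimensional henselian local domain with residue field $k$, and consider the induced proper dominant morphism $f:C\to\Spec A$. The key step will be to show that $f$ is finite; granting this, $C=\Spec B$ becomes affine and conclusion (ii) follows from standard structure results for henselian excellent local rings.

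To prove $f$ is finite, by properness it suffices to check that its fibers are finite. The closed fiber of $f$ is a proper closed subset of $C$ (otherwise, by irreducibility, $f$ would map all of $C$ to the closed point of $\Spec A$, contradicting dominance), hence $0$-dimensional and finite. For the generic fiber, any point of $C$ distinct from its generic point $\eta_C$ is a proper specialization of $\eta_C$ in $\mathcal{X}$, so its image in $\Spec R$ is a proper specialization of $\mathfrak{p}$, i.e., equals $\mathfrak{m}$; thus $\eta_C$ is the only point of $C$ lying over $\mathfrak{p}$. Moreover, since $\mathcal{X}\to\Spec R$ is of finite type and birational and $R$ is universally catenary (being excellent), the dimension formula applied at $\eta_C$ (codimension $1$ in $\mathcal{X}$) above $\mathfrak{p}$ (codimension $1$ in $\Spec R$) forces $\kappa(\eta_C)/\kappa(\mathfrak{p})$ to have transcendence degree $0$; being finitely generated, this extension is then finite.

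To finish, write $C=\Spec B$ with $B$ a finite $A$-algebra that is a domain. Since $A$ is henselian local, every finite $A$-algebra decomposes as a product of henselian local rings; as $B$ is a domain, $B$ itself is henselian local. Excellence of $R$ descends to $A=R/\mathfrak{p}$ and to the finite $A$-algebra $B$, so $B$ is Nagata, whence its normalization $B'$ in $\mathrm{Frac}(B)$ is finite over $B$. Applying the product decomposition once more, $B'$ is henselian local, and being a normal local ring of dimension $1$ it is a henselian discrete valuation ring. Its residue field is finite over that of $B$, which is finite over that of $A$, which equals $k$. The main technical hurdle is the finiteness of $f$, which rests on the dimension formula and hence on the universal catenarity supplied by excellence of $R$; the remaining steps are essentially bookkeeping with the properties of henselian and Nagata rings.
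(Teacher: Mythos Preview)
Your proof is correct and follows the same overall outline as the paper's: split on the image of $C$ in $\Spec R$, and in case (ii) show that $C\to\Spec(R/\mathfrak{p})$ is finite, then invoke henselian and Nagata properties to conclude. The one genuine difference is in how finiteness of $C\to\Spec(R/\mathfrak{p})$ is established. The paper first replaces $R$ by its normalization (legitimate since $\mathcal{X}$, being regular, factors through $\Spec\tilde{R}$) and then uses the standard fact that a proper birational morphism to a \emph{normal} $2$-dimensional scheme is an isomorphism over codimension-$1$ points; this immediately makes $C\to D$ birational and quasi-finite. You instead avoid the normality reduction and argue fiberwise, using the dimension formula (which needs universal catenarity, supplied by excellence) at $\eta_C$ to force the generic fiber to be finite. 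Both routes land on the same ``proper $+$ quasi-finite $\Rightarrow$ finite'' endgame; the paper's argument is slightly cleaner once the normality reduction is made, while yours works uniformly without that preliminary step.
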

\begin{proof}After replacing $R$ by its normalization, we may assume $R$ is normal.

Consider the scheme-theoretic image $D$ of $C\subseteq \mathcal{X}$ by
the structural morphism $\mathcal{X}\to\Spec R$. If $D$ is the closed point
of $\Spec R$, then $C$ is a proper curve over the residue field
$k$.
 Otherwise, $D$ is the closed subscheme of $\Spec
R$ defined by a height 1 prime ideal $\mathfrak{p}\subseteq R$. Since $R$ is
2-dimensional and normal, the proper birational morphism
$\mathcal{X}\to\Spec R$ is an isomorphism over codimension 1 points (cf.
\cite[p.150, Coro.$\;$4.4.3]{Liu}). Thus the induced morphism $C\to D$ is
proper birational and quasi-finite, and hence by Chevalley's
theorem, it is finite. Write $A=R/\mathfrak{p}$ so that $D=\Spec A$. Then
$C=\Spec B$ for some domain $B\subseteq\kappa(C)=\kappa(D)$ which is
finite over $A$. Since $A$ is a henselian excellent local domain, the same is true for $B$. The normalization $B'$ of $B$ is finite over $B$ and hence
a henselian local domain as well, and it coincides with the normalization
of $A$ in its fraction field $\mathrm{Frac}(A)=\kappa(C)=\kappa(D)$. Then it is clear that
$B'$ is a henselian discrete valuation ring with residue
field finite over $k$. This finishes the proof.
\end{proof}

Recall that $\Omega_R$ denotes the set of discrete valuations of $K$ that are centered at codimension 1 points of regular proper models.

\begin{coro}\label{coro3p19temp}
For any $v\in\Omega_R$, the residue field $\kappa(v)$ is either the function field of a curve over $k$ or the fraction field
of a henselian discrete valuation ring whose residue field is finite over $k$.
\end{coro}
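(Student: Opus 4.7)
The plan is to reduce the corollary directly to Lemma~\ref{lemma3p18temp}, interpreting a divisorial valuation $v\in\Omega_R$ as the valuation attached to a codimension~$1$ point of some regular proper model and then taking the closure of that point.

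More precisely, by the definition of $\Omega_R$, for any $v\in\Omega_R$ there exist a regular proper model $\mathcal{X}\to\Spec R$ and a point $x\in\mathcal{X}^{(1)}$ such that $v=v_x$ is the discrete valuation of $K$ associated to the regular local ring $\mathscr{O}_{\mathcal{X},x}$. In particular, $\kappa(v)=\kappa(x)$. Let $C\subseteq \mathcal{X}$ denote the reduced irreducible closed subscheme with generic point $x$; then $C$ is a curve on $\mathcal{X}$ and $\kappa(v)=\kappa(C)$ in the sense of the conventions fixed in Section~\ref{sec1}.

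Now I would simply apply Lemma~\ref{lemma3p18temp} to $C$. If we are in case (i) of the lemma, i.e.\ $C$ is a proper curve over $k$, then $\kappa(v)=\kappa(C)$ is by definition the function field of a curve over $k$. If we are in case (ii), then $C=\Spec B$ where the normalization $B'$ of $B$ is a henselian discrete valuation ring with residue field finite over $k$; since $B\hookrightarrow B'$ is a finite birational extension of domains, $\mathrm{Frac}(B)=\mathrm{Frac}(B')$, and therefore $\kappa(v)=\kappa(C)=\mathrm{Frac}(B)=\mathrm{Frac}(B')$ is the fraction field of a henselian discrete valuation ring with residue field finite over $k$.

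Since the real work (the use of the normality of $R$, the fact that $\mathcal{X}\to\Spec R$ is an isomorphism over codimension~$1$ points, Chevalley's theorem to conclude finiteness of $C\to D$, and henselianness being inherited by finite extensions) has already been carried out in Lemma~\ref{lemma3p18temp}, there is essentially no obstacle here; the only point to check is the bookkeeping identification $\kappa(v)=\kappa(C)$, which is immediate from our conventions. Thus the corollary is a direct rephrasing of the lemma at the level of residue fields of divisorial valuations.
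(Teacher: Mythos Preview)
Your argument is correct and matches the paper's approach: the corollary is stated in the paper immediately after Lemma~\ref{lemma3p18temp} without any separate proof, since it is exactly the reformulation you describe---take the closure $C$ of the codimension~$1$ point corresponding to $v$ and apply the dichotomy of the lemma. Your added remark that $\mathrm{Frac}(B)=\mathrm{Frac}(B')$ in case~(ii) is the only bookkeeping needed, and it is correct.
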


Now we can prove the following variant of \cite[Coro.$\;$5.2]{Salt07}.

\begin{coro}\label{coro3p20temp}
Let $q$ be a prime number unequal to the characteristic of the
residue field $k$ and $\alpha\in\Br(K)[q]$ a Brauer class of order $q$.
Let $\mathcal{X}\to\Spec R$ be a regular proper model such that the
ramification divisor $\Ram_{\mathcal{X}}(\alpha)$ of $\alpha$ on $\mathcal{X}$ has only
simple normal crossings and that $\alpha$ has no cool points or chilly
loops on $\mathcal{X}$. Write
$\ram_{C_i}(\alpha)=(L_i/\kappa(C_i)\,,\,\sigma_i)$ for the
ramification data and $\beta_i\in\Br(\kappa(C_i))$ for the residual classes.

Suppose that $k$ is a finite field. Then the following conditions
are equivalent:

$(\mathrm{i})$ $\alpha$ has index $q$.

$(\mathrm{ii})$ $\beta_i\in\Br(\kappa(C_i))$ is split by $L_i/\kappa(C_i)$
for every $i$.

$(\mathrm{iii})$ There are no hot points for $\alpha$ on $\mathcal{X}$.
\end{coro}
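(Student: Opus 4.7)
Two of the implications follow from results already in hand. The implications (i)$\Rightarrow$(ii) and (i)$\Rightarrow$(iii) were recorded immediately after Definition~\ref{defn3p7temp}, on the strength of \cite[Prop.~0.5 and Prop.~3.10(d)]{Salt07}. For (ii)$\Rightarrow$(i), note that since $k$ is finite it is perfect, so Remark~\ref{remark3p17temp}(2) applies: under the standing hypotheses on $\mathcal{X}$ (snc ramification, no cool points, no chilly loops), the splitting of all residual classes by the ramification implies that $\alpha$ is cyclic of degree $q$; hence $\ind(\alpha)\mid q$, and combined with $\mathrm{ord}(\alpha)=q$ this gives $\ind(\alpha)=q$.

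The substantive implication is (iii)$\Rightarrow$(ii), and this is where the hypothesis that $k$ is finite enters. Fix an irreducible component $C=C_i$ of $\Ram_{\mathcal{X}}(\alpha)$ with ramification character $\chi\in H^1(\kappa(C),\mathbb{Z}/q)$ cut out by $L=L_i/\kappa(C)$ and residual class $\beta=\beta_i\in\Br(\kappa(C))$; the goal is to show that $\beta$ is killed by $L/\kappa(C)$. By Corollary~\ref{coro3p19temp}, $\kappa(C)$ is either the function field of a proper curve over the finite field $k$, or the fraction field of a henselian discrete valuation ring with residue field finite over $k$. In both cases class field theory gives a clean local-global description of $\Br(\kappa(C))[q]$: whether $\beta$ is killed by $L/\kappa(C)$ can be detected by its local images at closed points of $C$, viewed inside $\mathcal{X}$. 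It thus suffices to verify, at each such point $P$, that the local image of $\beta$ is split by the completion of $L$ at the corresponding valuation.

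Such a $P$ is either a smooth point of $\Ram_{\mathcal{X}}(\alpha)$, lying only on $C$ among the ramification components, or a nodal point of $\Ram_{\mathcal{X}}(\alpha)$ lying on $C$ and some other component $C'$. In the former case $\beta$ is unramified at $P$ (the residual class receives no contribution from other ramification curves), and its specialization lies in $\Br(\kappa(P))$, which vanishes because $\kappa(P)$ is a finite extension of $k$ and hence has property $B_1$ (Example~\ref{exam2p3temp} and Proposition~\ref{prop2p4temp}); the local condition is thus automatic. In the latter case, the absence of hot points together with the absence of cool points forces $P$ to be cold or chilly. At a chilly point the coefficient identity $\chi_{C'}(P)=s\,\chi(P)$ with $s\in(\mathbb{Z}/q)^{*}$ shows that $\chi|_P$ and $\chi_{C'}|_P$ generate the same subgroup of $H^1(\kappa(P),\mathbb{Z}/q)$, and combined with the compatibility of residual classes across chilly nodes (\cite[Prop.~3.9]{Salt07}) this forces the local image of $\beta$ at $P$ to be killed by $L$. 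At a cold point $\chi$ is itself ramified at $P$, and an inspection of the cold-point form of $\alpha$ from Remark~\ref{remark3p8temp} via \cite[Prop.~3.10]{Salt07} yields the analogous conclusion.

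The main obstacle is this last case analysis at cold and chilly points: one must reconcile the explicit local form of $\alpha$ from \cite[Prop.~1.2]{Salt97} and the residual-class computations of \cite[\S3]{Salt07} with the local conditions dictated by class field theory over $\kappa(C)$, and verify that, in the absence of hot points, the remaining cold/chilly geometry is precisely restrictive enough to force the local images of $\beta$ to be killed by $L$. The reduction to closed points is essentially formal, as the two residue-field possibilities furnished by Corollary~\ref{coro3p19temp} both admit a Hasse-type local-global principle for the $q$-torsion of their Brauer groups.
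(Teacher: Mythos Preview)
Your handling of (i)$\Rightarrow$(ii), (i)$\Rightarrow$(iii) and (ii)$\Rightarrow$(i) matches the paper. The substance is in (iii)$\Rightarrow$(ii), and here there is a gap you have not flagged, in addition to the cold/chilly analysis you admit is incomplete.

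The residual class $\beta_C$ is only well-defined up to whether $L/\kappa(C)$ splits it (\cite[Coro.~0.7]{Salt07}); to reason about its ramification at individual closed points of $C$ you must first fix an actual representative $\beta_{C,\pi}$ by choosing a global $\pi\in K^*$ with $q\nmid v_{C_i}(\pi)$ for every component $C_i$. The paper does this via \cite[Lemma~4.1]{Salt07}, which also arranges the chilly-coefficient compatibilities needed later; you never make such a choice, so statements like ``$\beta$ is unramified at $P$'' have no determinate meaning. Even after fixing $\pi$, your assertion that $\beta$ is unramified at every curve point $P$ is unjustified: $\mathrm{div}_{\mathcal X}(\pi)$ will typically contain auxiliary components through $P$ that lie outside $\mathrm{Ram}_{\mathcal X}(\alpha)$, and these do contribute to $\partial_P(\beta_{C,\pi})$. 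The paper does not organise by ``curve point vs.\ nodal point'' but by whether $L/\kappa(C)$ is \emph{split} or \emph{nonsplit} at $P$. When $L$ is split at $P$ one concludes via \cite[Prop.~3.10(b) and Prop.~3.11]{Salt07}. When $L$ is nonsplit and ramified at $P$ (this covers every cold point, since there $\chi$ itself is ramified), $L$ trivially kills any ramification of $\beta$. When $L$ is nonsplit and unramified at $P$, the decisive step --- missing from your sketch --- is that $\kappa(P)$ is \emph{finite}: the residue field $\kappa(w_P)$ of the unique place of $L$ above $P$ is then the unique degree-$q$ extension of $\kappa(P)$, so the restriction map $H^1(\kappa(P),\mathbb Z/q)\to H^1(\kappa(w_P),\mathbb Z/q)$ is zero, and $L$ kills $\partial_P(\beta)$ regardless of its value. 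This is where the finiteness of $k$ does real work beyond the Hasse principle for $\Br(\kappa(C))$, and it subsumes the separate cold/chilly discussion you were worried about.
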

\begin{proof}We have
(i)$\Rightarrow$(ii)$\Rightarrow$(iii) by \cite[Prop.$\;$0.5 and Prop.$\;$3.10 (d)]{Salt07}.

To see (ii)$\Rightarrow$(i), note that by
Prop.$\;$\ref{prop3p14temp} and Remark$\;$\ref{remark3p17temp} that
there is a degree $q$ Kummer extension $M/K=K(\sqrt[q]{g})/K$ that
splits all the ramification of $\alpha$ over $R$. As the residue field
is finite, we have $\Br_{nr}(M/X)=0$ and in particular $\alpha_M=0$.
Hence, the index of $\alpha$ divides $q$, the degree of the extension
$M/K$. Since $\alpha$ has order $q$, it follows that the index of $\alpha$
is $q$.

To show (iii)$\Rightarrow$(ii), let $C$ be a fixed irreducible
component of the ramification divisor $\Ram_{\mathcal{X}}(\alpha)$ with
associated ramification data $\ram_C(\alpha)=(L/\kappa(C)\,,\,\sigma)$.
By  \cite[Lemma$\;$4.1]{Salt07}, there exists $\pi\in K^*$ having
the following properties: (1) the valuation $s_i:=v_{C_i}(\pi)$ with
respect to every component $C_i$ of $\Ram_{\mathcal{X}}(\alpha)$ is prime to
$q$; and (2) whenever there is a chilly point $P$ in the
intersection of two components $C_i$ and $C_j$, the coefficient of
$P$ with respect to $C_i$ is equal to
$s_j/s_i=v_{C_j}(\pi)/v_{C_i}(\pi)$ mod $q$.

Put $M:=K(\sqrt[q]{\pi})$. Let $\beta$ denote the residual class of
$\alpha$ with respect to $M$, i.e., the specialization of
$\alpha_M\in\Br(M)$ in $\Br(\kappa(C))$. We want to show that $\beta$
is split by $L/\kappa(C)$.

By Coro.$\;$\ref{coro3p19temp}, $\kappa(C)$ is either a function
field in one variable over the finite field $k$ or the fraction
field of a henselian discrete valuation ring with finite residue
field. The same is true for $L$. So, in either case,
$\beta\in\Br(\kappa(C))[q]$ is split by $L/\kappa(C)$ if and only if
$L/\kappa(C)$ splits all ramification of $\beta$ at every closed
point $P$ of $C$.

Assume first that $L/\kappa(C)$ is split at $P$. Then $P$ is either
a chilly point or a curve point ($P$ is not cold because
$L/\kappa(C)$ is unramified at $P$, cf.
Definition$\;$\ref{defn3p7temp}). If $P$ is chilly, $\beta$ is
unramified at $P$ by \cite[Prop.$\;$3.10 (b)]{Salt07}. If $P$ is a
curve point, then we conclude by \cite[Prop.$\;$3.11]{Salt07}.

Next consider the case where $L/\kappa(C)$ is nonsplit at $P$. Then
the $P$-adic valuation $v_P$ of $\kappa(C)$ extends uniquely to a
discrete valuation $w_P$ of $L$. If $L/\kappa(C)$ is ramified at
$P$, it is obvious that $L/\kappa(C)$ splits the ramification of
$\beta$ at $P$. If $L/\kappa(C)$ is unramified at $P$, then
$\kappa(w_P)$ is the unique degree $q$ extension of the finite field
$\kappa(v_P)=\kappa(P)$. Thus, the restriction map
\[
\mathrm{Res}\,:\;\;H^1(\kappa(P)\,,\,\mathbb{Z}/q\mathbb{Z})\lra
H^1(\kappa(w_P)\,,\,\mathbb{Z}/q\mathbb{Z})
\]is the zero map, which implies that $L/\kappa(C)$ splits the
ramification of $\beta$ at $P$. The corollary is thus proved.
\end{proof}

\begin{example}\label{exam3p21}
Here is a concrete example which shows the bound on the period-index exponent in Thm.$\;$\ref{thm3p6temp} is sharp. The criterion in the above corollary will be used in the argument.

Let $p$ be a prime number such that $p\equiv 3\pmod{4}$. Let
$k=\mathbb{F}_p$ be the finite field of cardinality $p$ and let
$R=k[\![x,\, y]\!]$ be the ring of formal power series in two
variables $x,\,y$ over $k$. Let $\mathcal{X}\to\Spec R$ be the blow-up of $\Spec R$ at the closed point and let $E\subseteq \mathcal{X}$
be the exceptional divisor. We have
\[
\Proj(k[T,\,S])\cong E=\Proj(R[T,\,S]/(x,\,y))\;\subseteq\;\mathcal{X}=\Proj\left(R[T,\,S]/(xS-yT)\right)\,.
\]
Let $f_1=y,\,f_2=x,\,f_3=y+x$ and let $C_i\subseteq\mathcal{X}$ be the strict transform of the curve
defined by $f_i=0$ in $\Spec R$ for each $i=1,\,2,\,3$. Each intersection $C_i\cap E$ consists of a single point $P_i$.

Let $\alpha$ be the Brauer class of the biquaternion algebra $(-1,\,y)\otimes(y+x,\,x)$ over $K=k(\!(x,\,y)\!)$. The ramification divisor
$\mathrm{Ram}_{\mathcal{X}}(\alpha)$ is $C_1+C_2+C_3+E$. The set of nodal points for $\alpha$ on $\mathcal{X}$ is $\{P_1,\,P_2,\,P_3\}$.
Locally at $P_1$, we may choose $s,\,x$ as local equations for $C_1,\,E$ respectively, where $s=S/T=y/x\in K$. Thus, in the Brauer group $\Br(K)$ we have
\[
\alpha=(-1,\,y)+(y+x,\,x)=(-1,\,xs)+(xs+x\,,\,x)=(-1,\,s)+(s+1,\,x)\,.
\]The function $s$ vanishes at $P_1$ and $-1\neq 1\in \kappa(P_1)^*/\kappa(P_1)^{*2}=k^*/k^{*2}$, so $P_1$ is a hot point by definition (cf. Remark$\;$\ref{remark3p8temp}). One may verify that $P_2$ and $P_3$ are cold points.

As for the residual classes, one may check that for each $i$ the residual classes of $\alpha$ at $C_i$ are split by the ramification. Let us now
show that at $E$ the residual classes are not split by the ramification. Indeed, if $v_E$ denotes the discrete valuation of $K$ defined by $E$, we have
$v_E(x)=v_E(y)=v_E(y+x)=1$. Then it is easy to see that the ramification $\mathrm{ram}_E(\alpha)$ of $\alpha$ at $E$ is represented by the
quadratic extension $k(s)(\sqrt{s+1})$ of $\kappa(E)=k(s)$. Putting $M=K(\sqrt{x})$, $\alpha_M=(-1,\,y)=(-1,\,s)\in \Br(M)$ and hence the residue
class of $\alpha$ at $E$ with respect to $M/K$ is $\beta_E=(-1,\,s)\in\Br(\kappa(E))=\Br(k(s))$. Putting $u=\sqrt{s+1}$, it is
easy to see that the quaternion algebra $(-1,\,s)=(-1,\,u^2-1)$ is not split over $k(u)=k(s)(\sqrt{s+1})$ (in fact, it
ramifies at $u=1$ since $-1$ is not a square in $k=\mathbb{F}_p$).

By Thm.$\;$\ref{thm3p6temp} and Coro.$\;$\ref{coro3p20temp}, we conclude that  $\alpha\in \Br(K)[2]$ is of index 4.
\end{example}

We shall now prove Thm.$\;$\ref{thm1p3temp} in a slightly
generalized form.

\begin{thm}\label{thm3p21temp}
Let $R$ be a $2$-dimensional, henselian, excellent local domain
whose residue field $k$ has property $B_1$, $K$ the fraction field
of $R$, $q$ a prime number unequal to the characteristic of $k$, and
$\alpha\in\Br(K)[q]$. Assume either $\mu_q\subseteq R$ or $k$ is
perfect.

If for every $v$ in the set $\Omega_R$ of discrete valuations of $K$
that correspond to codimension one points of regular proper models,
the Brauer class $\alpha_v=\alpha\otimes_KK_v\in\Br(K_v)$ is represented
by a cyclic algebra of degree $q$ over $K_v$, then $\alpha$ is
represented by a cyclic algebra of degree $q$ over $K$.
\end{thm}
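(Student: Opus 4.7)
The plan is to reduce the global cyclicity statement to the index-$q$ criterion provided by Theorem~\ref{thm3p16temp} (or, more precisely, by Remark~\ref{remark3p17temp}(2)), and then to verify the local hypothesis needed there by unpacking what ``cyclic of degree $q$'' means for the completion at each divisorial valuation.

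First, by resolution of singularities and embedded resolution I choose a regular proper model $\mathcal{X}\to\Spec R$ on which $\Ram_\mathcal{X}(\alpha)$ is a simple normal crossing divisor, and then use Proposition~\ref{prop3p9temp} to blow up further so that $\alpha$ has no cool points and no chilly loops on $\mathcal{X}$. Write $\Ram_\mathcal{X}(\alpha)=\sum C_i$ with ramification data $\ram_{C_i}(\alpha)=(L_i/\kappa(C_i),\sigma_i)$ and let $\beta_i\in\Br(\kappa(C_i))$ denote the residual class of $\alpha$ at $C_i$. In view of Remark~\ref{remark3p17temp}(2), it suffices to establish that for every $i$ the class $\beta_i$ is split by the field extension $L_i/\kappa(C_i)$.

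To verify this, I examine $\alpha$ at the divisorial valuation $v_i:=v_{C_i}\in\Omega_R$. By hypothesis $\alpha_{K_{v_i}}\in\Br(K_{v_i})$ is represented by a cyclic algebra of degree $q$, so its index over $K_{v_i}$ divides $q$. Since $C_i$ is a component of the ramification divisor, $\ram_{v_i}(\alpha)=(L_i/\kappa(C_i),\sigma_i)\neq 0$, so the cyclic extension $L_i/\kappa(C_i)$ has degree exactly $q$, and $\alpha_{K_{v_i}}$ is nontrivial of index precisely $q$. Using the standard structure of the Brauer group of a henselian discretely valued field, I pass to the totally ramified Kummer extension $K_{v_i}(\sqrt[q]{\pi_i})/K_{v_i}$ given by a uniformizer $\pi_i$; the resulting class is unramified, and its specialization is precisely the residual class $\beta_i\in\Br(\kappa(C_i))$. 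The index formula for a totally ramified degree-$q$ Brauer class over a henselian DVR then reads $\ind(\alpha_{K_{v_i}})=q\cdot\ind(\beta_i\otimes_{\kappa(C_i)}L_i)$, so the equality $\ind(\alpha_{K_{v_i}})=q$ forces $\beta_i\otimes_{\kappa(C_i)}L_i=0$, i.e.\ $\beta_i$ is split by $L_i/\kappa(C_i)$, as required.

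Once the splitting of all residual classes by the ramification is established, Remark~\ref{remark3p17temp}(2) applied to $\mathcal{X}$ (whose closed fiber has dimension $\le 1$ and whose residue field $k$ is assumed $B_1$, with either $\mu_q\subseteq R$ or $k$ perfect) produces a Kummer extension $M/K=K(\sqrt[q]{g})$ that splits all ramification of $\alpha$ over $\mathcal{X}$. Corollary~\ref{coro2p5temp} gives $\Br_{nr}(M/\mathcal{X})=0$, hence $\alpha_M=0$. Albert's theorem (\cite[Prop.~0.1]{Salt07}) then realizes $\alpha$ as a cyclic algebra of degree $q$ over $K$. The main technical point — and the place where the local hypothesis is really used — is the identification in the middle paragraph of the local residual class at $v_{C_i}$ with the geometric residual class $\beta_i$ on $\mathcal{X}$, together with the fact that on a henselian DVR the cyclicity (equivalently, index $\le q$) of a ramified class of prime order $q$ is detected precisely by the splitting of this residual class in the ramification extension.
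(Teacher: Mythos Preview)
Your proof is correct and follows the same overall architecture as the paper: reduce via Remark~\ref{remark3p17temp}(2) to showing that the residual classes $\beta_i$ are split by the ramification extensions $L_i/\kappa(C_i)$, and verify this one component at a time using the local hypothesis at $v_{C_i}$.

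The only difference lies in how the local step is carried out. The paper argues by contradiction: it writes $\alpha_{K_v}=(\chi_v,b_v)$ explicitly, performs a case analysis according to whether the cyclic extension attached to $\chi_v$ is unramified or (totally, tamely) ramified over $K_v$, and in each case exhibits a concrete totally ramified degree-$q$ extension $K'_v/K_v$ that \emph{splits} $\alpha_{K_v}$; the residual class with respect to $K'_v$ is then zero, hence trivially split by $L$, contradicting the assumption. Your argument instead packages this into the index identity $\ind(\alpha_{K_{v_i}})=q\cdot\ind_{L_i}(\beta_i\otimes L_i)$ for a tame Brauer class with ramification character of order $q$ over a henselian discrete valuation ring, and reads off $\beta_i\otimes L_i=0$ from $\ind(\alpha_{K_{v_i}})=q$. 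This is a perfectly valid and arguably cleaner route; the identity is standard (it follows, for instance, from the Witt-type decomposition $\alpha_{K_v}=\alpha'+(\chi,\pi)$ with $\alpha'$ unramified together with the fact that $\alpha$ becomes unramified with residue $\beta_{L}$ over the unramified lift of $L$, combined with the elementary divisibility $q\cdot\ind_L(\beta_L)\mid\ind(\alpha_{K_v})$ obtained by analyzing any splitting field). The trade-off is that the paper's hands-on computation is entirely self-contained, whereas your version imports a structural result whose proof is of comparable length to the case analysis it replaces.
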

\begin{proof}
Let $\mathcal{X}\to \Spec R$ be a regular proper model such that
$\Ram_{\mathcal{X}}(\alpha)$ is an snc divisor and that $\alpha$ has no cool
points or chilly loops on $\mathcal{X}$. By Thm.$\;$\ref{thm3p16temp} and
Remark$\;$\ref{remark3p17temp} (2), it suffices to prove that all
the residual classes of $\alpha$ at all components of $\Ram_{\mathcal{X}}(\alpha)$
are split by the ramification.

Assume the contrary. Then there is an irreducible component  $C$ of
$\Ram_{\mathcal{X}}(\alpha)$ such that the residual classes of $\alpha$ at $C$ are
not split by the ramification. Now consider the discrete valuation
$v=v_C$ of $K$ defined by $C$. By assumption, $\alpha_v\in\Br(K_v)[q]$
is cyclic of degree $q$, so that $\alpha_v=(\chi_v\,,\,b_v)$ for some
$\chi_v\in H^1(K_v\,,\,\mathbb{Z}/q)$ and $b_v\in K_v^*$. Without loss of
generality, we may assume $b_v=w\pi^t$, where $\pi\in K^*_v$ is a
uniformizer for $v$, $w\in K_v^*$ is a unit for $v$ and $t$ is an
integer such that $0\le t\le q-1$.

Let $(L/K_v\,,\,\sigma_v)$ be the pair representing the character
$\chi_v\in\Hom_{cts}(G_{K_v}\,,\,\mathbb{Z}/q)$. If $L/K_v$ is unramified,
then $t\neq 0$, because $\alpha$ is ramified at $v=v_C$. Then there exist integers $r,\,s\in \mathbb{Z}$ such that $1=rq+st$. Putting $\pi'=w^s\pi$, we have $b_v=w\pi^t=(w^r)^q(\pi')^t$.  Then
\[
\alpha_v=(\chi_v\,,\,b_v)=(\chi_v\,,\,(\pi')^t)\in \Br(K_v)[q]\,
\]is clearly split by the totally ramified
extension $K'_v/K_v:=K_v(\sqrt[q]{\pi'})/K_v$. In particular, the
residual class of $\alpha$ with respect to $K'_v/K_v$ is 0 and a
fortiori the residual classes of $\alpha$ at $C$ are split by the
ramification. But this contradicts our choice of $C$.

If $L/K_v$ is ramified, then it is totally and tamely ramified. So $L=K_v(\sqrt[q]{\theta})$ for some $\theta\in K_v$. The extension being Galois,
it follows that $\mu_q\subseteq L$. Since the residue fields of $L$ and $K_v$ are the same, Hensel's lemma implies that $\mu_q\subseteq K_v$.
We may thus assume $\alpha_v=(u\pi^s\,,\,b_v)=(u\pi^s\,,\,w\pi^t)$ for
some unit $u\in K_v^*$ and some integer $s$ such that $0\le s\le
q-1$. Since $\alpha$ is ramified at $v=v_C$, $s$ and $t$ cannot be both
0. Assume for instance $s>0$. A similar argument as before shows
that $\alpha_v$ is split by a totally ramified extension
$K_v(\sqrt[q]{\pi})/K_v$, which leads to a contradiction again. This
proves the theorem.
\end{proof}

The following special case, which answers a question in \cite[Remark$\;$3.7]{CTOP}, will be used in section$\;$\ref{sec4}.

\begin{coro}\label{coro3p22temp}
Let $R$ be a $2$-dimensional, henselian, excellent local domain with
fraction field $K$ and residue field $k$. Assume that $k$ is a
finite field of characteristic $\neq 2$. Let $D$ be a central simple
algebra  over $K$ of period  $2$.

If for every $v\in\Omega_R$, $D\otimes_KK_v$ is Brauer equivalent to
a quaternion algebra over $K_v$, then $D$ is Brauer equivalent to a
quaternion algebra over $K$.
\end{coro}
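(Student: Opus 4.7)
The plan is to reduce the corollary directly to Theorem~\ref{thm3p21temp} applied with $q=2$. Set $\alpha=[D]\in\Br(K)[2]$. Since $\mathrm{char}(k)\neq 2$, the prime $2$ is invertible in $k$, and moreover $\mu_2=\{\pm 1\}\subseteq R$ automatically; in addition, the finite field $k$ is perfect, so the alternative hypothesis of Thm.~\ref{thm3p21temp} is satisfied as well. The finite field $k$ has property $B_1$ by Example~\ref{exam2p3temp}(2), so all the standing assumptions of Thm.~\ref{thm3p21temp} are in place.

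Next, the task is to translate the local quaternionic condition into the cyclicity condition demanded by Thm.~\ref{thm3p21temp}. For any field $F$ of characteristic $\neq 2$, a quaternion algebra $(a,b)$ over $F$ is, by the general discussion of $\S$\ref{sec2p1}, the same as the cyclic algebra $(\chi,b)$, where $\chi\in H^1(F\,,\,\mathbb{Z}/2)$ is the character corresponding to the \'etale quadratic extension $F(\sqrt{a})/F$ (when $a\notin F^{*2}$; the split case is trivial). Consequently, for every $v\in\Omega_R$, the hypothesis that $D\otimes_KK_v$ is Brauer equivalent to a quaternion algebra over $K_v$ is literally the statement that $\alpha_v\in\Br(K_v)[2]$ is represented by a cyclic algebra of degree $2$ over $K_v$.

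Applying Thm.~\ref{thm3p21temp} with $q=2$, we conclude that $\alpha$ itself is represented by a cyclic algebra of degree $2$ over $K$, which, translating back, is a quaternion algebra over $K$. Hence $D$ is Brauer equivalent to a quaternion algebra over $K$, as desired.

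Since the corollary is nothing more than the case $q=2$ of Thm.~\ref{thm3p21temp} combined with the tautological identification of quaternion algebras and degree~$2$ cyclic algebras in residue characteristic $\neq 2$, no genuinely new obstacle arises here; all the nontrivial work has been absorbed into the proof of Thm.~\ref{thm3p21temp} (and behind it, into Thm.~\ref{thm3p16temp} and Prop.~\ref{prop3p14temp}). The only point worth checking carefully is that the hypotheses $\mathrm{char}(k)\neq 2$ and $k$ finite do indeed supply both required assumptions of Thm.~\ref{thm3p21temp} ($B_1$ residue field, and either $\mu_q\subseteq R$ or $k$ perfect), which is immediate.
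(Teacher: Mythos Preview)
Your proof is correct and takes essentially the same approach as the paper: the paper presents Corollary~\ref{coro3p22temp} explicitly as a special case of Theorem~\ref{thm3p21temp} and gives no separate argument, so your verification that the hypotheses of Thm.~\ref{thm3p21temp} are satisfied for $q=2$ (together with the identification of quaternion algebras with degree~$2$ cyclic algebras) is exactly what is needed.
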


The analog of Thm.$\;$\ref{thm3p21temp} in the case of a $p$-adic
function field does not seem to have been noticed. Let us prove it
in the following general form.

\begin{thm}\label{thm3p23temp}
Let $A$ be a henselian, excellent discrete valuation ring whose
residue field $\kappa$ has property $B_1$. Let $q$ be a prime number
unequal to the characteristic of $\kappa$, $F$ the function field of
an algebraic curve over the fraction field of $A$ and $\alpha\in
\Br(F)[q]$.  For every regular surface $\mathcal{Y}$ with function field $F$
equipped with a proper flat morphism $\mathcal{Y}\to\Spec A$, let
$\Omega(F/\mathcal{Y}^{(1)})$ denote the set of discrete valuations of $F$
corresponding to codimension one points of $\mathcal{Y}$. Let $\Omega_{A,\,F}$ be
the union of all $\Omega(F/\mathcal{Y}^{(1)})$, where $\mathcal{Y}$ runs over regular surfaces as above.

Assume either $\mu_q\subseteq F$ or $\kappa$ is perfect. If for
every $v\in\Omega_{A,\,F}$, $\alpha_v=\alpha\otimes_FF_v\in\Br(F_v)$ is
represented by a cyclic algebra of degree $q$ over $F_v$, then $\alpha$
is represented by a cyclic algebra of degree $q$ over $F$.
\end{thm}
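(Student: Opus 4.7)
The plan is to adapt the proof of Theorem~\ref{thm3p21temp} essentially verbatim, replacing the $2$-dimensional henselian base $\Spec R$ by a regular $2$-dimensional arithmetic surface $\mathcal{Y}\to\Spec A$. The point is that in both settings the object on which the geometric analysis is carried out is a regular excellent surface, proper over a henselian local ring $A$, whose closed fiber has dimension at most $1$, with residue field $\kappa$ satisfying property $B_1$; Proposition~\ref{prop3p14temp}, Corollary~\ref{coro3p15temp} and Remark~\ref{remark3p17temp} were stated at precisely this level of generality, so they remain available in the arithmetic setting.

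Concretely, I would first choose a regular surface $\mathcal{Y}$ with function field $F$ equipped with a proper flat morphism $\mathcal{Y}\to\Spec A$ such that $\Ram_{\mathcal{Y}}(\alpha)$ is a simple normal crossing divisor on $\mathcal{Y}$; after further blow-ups using Proposition~\ref{prop3p9temp}, I may arrange in addition that $\alpha$ has no cool points and no chilly loops on $\mathcal{Y}$. By Corollary~\ref{coro3p15temp} combined with Remark~\ref{remark3p17temp}(2), in order to conclude that $\alpha$ is cyclic of degree $q$ over $F$ it then suffices to show that at every irreducible component $C$ of $\Ram_{\mathcal{Y}}(\alpha)$ the residual classes $\beta_C$ are split by the ramification $(L_C/\kappa(C)\,,\,\sigma_C)$.

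I would then argue by contradiction: suppose some component $C$ carries a residual class that is not split by its ramification. The divisorial valuation $v=v_C$ of $F$ lies in $\Omega_{A,\,F}$ by definition, so the hypothesis produces a cyclic presentation $\alpha_v=(\chi_v\,,\,b_v)\in\Br(F_v)$ with $\chi_v\in H^1(F_v\,,\,\mathbb{Z}/q)$ and $b_v\in F_v^*$. Writing $b_v=w\pi^t$ with $\pi$ a uniformizer of $v$, $w$ a $v$-unit, and $0\le t\le q-1$, I would run the same case analysis as in Theorem~\ref{thm3p21temp} on whether the cyclic extension associated to $\chi_v$ is unramified or ramified over $F_v$. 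In the unramified case, the fact that $\alpha$ ramifies at $v=v_C$ forces $t\neq 0$, and a short Bezout manipulation exhibits a totally ramified degree-$q$ Kummer extension $F_v(\sqrt[q]{\pi'})/F_v$ killing $\alpha_v$; in the ramified case the extension must be totally and tamely ramified of degree $q$, which via Hensel's lemma places $\mu_q$ inside $F_v$ and again yields a totally ramified degree-$q$ splitting extension of $\alpha_v$. Either way, the residual class of $\alpha$ at $C$ with respect to this splitting extension vanishes, contradicting the choice of $C$.

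The main obstacle, rather than the local analysis at $v$ which is essentially formal once a cyclic presentation of $\alpha_v$ is granted, is checking that the geometric machinery of Sections~\ref{sec3p1}--\ref{sec3p3} transposes faithfully to the arithmetic-surface setting: namely, the triviality of $\Br_{nr}(M/\mathcal{Y})$ for an appropriate degree-$q$ Kummer splitting extension $M/F$ (which follows from Corollary~\ref{coro2p5temp} since $\kappa$ is $B_1$), the availability of a regular snc model $\mathcal{Y}$ proper over $\Spec A$ (standard resolution for arithmetic surfaces), and the validity of Proposition~\ref{prop3p14temp} at the required level of generality (using $\kappa$ perfect, or appealing to \cite[Thm.~7.13]{Salt08} when $\mu_q\subseteq F$). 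All three ingredients are guaranteed by the hypotheses, so that in the end only a notational translation $R\leadsto A$, $\mathcal{X}\leadsto\mathcal{Y}$, $\Omega_R\leadsto\Omega_{A,\,F}$ is needed.
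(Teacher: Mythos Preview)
Your proposal is correct and follows the paper's own approach essentially verbatim: the paper's proof of Theorem~\ref{thm3p23temp} consists precisely of choosing a regular proper flat model $X\to\Spec A$ with $\Ram_X(\alpha)$ snc and no cool points or chilly loops, invoking Corollary~\ref{coro3p15temp} together with Remark~\ref{remark3p17temp} to reduce to showing the residual classes are split by the ramification, and then referring back to the local analysis in the proof of Theorem~\ref{thm3p21temp}. One small correction: you cite Remark~\ref{remark3p17temp}(2), which is phrased for the $\Spec R$ setting; the relevant part here is Remark~\ref{remark3p17temp}(1), which applies directly to Corollary~\ref{coro3p15temp} in the general $\Spec A$ setting.
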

\begin{proof}
By resolution of singularities, there exists a regular surface $X$
with function field $F$, together with a proper flat morphism
$X\to\Spec A$, such that the ramifcation divisor $\Ram_{X}(\alpha)$ has
only simple normal crossings and $\alpha$ has no cool points or chilly
loops on $X$. By Coro.$\;$\ref{coro3p15temp} and
Remark$\;$\ref{remark3p17temp}, it suffices to show that all the
residual classes of $\alpha$ are split by the ramification. This may be
done as in the proof of Thm.$\;$\ref{thm3p21temp}.
\end{proof}

\section{Quadratic forms and the $u$-invariant}\label{sec4}

\subsection{A local-global principle from class field theory}\label{sec4p1}

To prove our results on quadratic forms, a result coming from Saito's
work on class field theory for 2-dimensional local rings will be used at a few crucial points.

\

As before, let $R$ be a 2-dimensional, henselian, excellent local
domain with fraction field $K$ and residue field $k$. The following
proposition grows out of a conversation with S. Saito.

\begin{prop}\label{prop4p1temp}
Assume $R$ is normal and the residue field $k$ is finite. Let
$\mathcal{X}\to\Spec R$ be a regular proper model such that the reduced
divisor on the closed fiber has only simple normal crossings. Let
$n>0$ be an integer that is invertible in $k$.

Then the natural map
\[
H^3(K\,,\,\mu_n^{\otimes 2})\lra \prod_{v\in\mathcal{X}^{(1)}}H^3(K_v\,,\,\mu_n^{\otimes 2})
\]is injective.
\end{prop}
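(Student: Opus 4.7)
The plan is to translate the local triviality hypothesis at each $v\in\mathcal{X}^{(1)}$ into a vanishing of residues, and then to invoke a result from Saito's class field theory for $2$-dimensional henselian local rings (\cite{Sai87}). First I compute the cohomological dimensions of the residue fields appearing. By Lemma$\;$\ref{lemma3p18temp} and Coro.$\;$\ref{coro3p19temp}, for any $v\in\mathcal{X}^{(1)}$ the field $\kappa(v)$ is either the function field of a proper curve over the finite field $k$ (a global function field, of cohomological dimension $2$ for coefficients of order prime to $\mathrm{char}(k)$), or the fraction field of a henselian discrete valuation ring with residue field finite over $k$ (a $1$-dimensional henselian local field, again of cd~$2$ for coefficients prime to the residue characteristic). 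In both cases $\mathrm{cd}_{\ell}(\kappa(v))\le 2$ for every prime $\ell$ invertible in $k$, whence
\[
H^3(\kappa(v)\,,\,\mu_n^{\otimes 2})=0.
\]

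Since $K_v$ is henselian discretely valued with residue characteristic prime to $n$, the standard residue exact sequence reads
\[
0\lra H^3(\kappa(v)\,,\,\mu_n^{\otimes 2})\lra H^3(K_v\,,\,\mu_n^{\otimes 2})\overset{\partial^{3,2}_v}{\lra} H^2(\kappa(v)\,,\,\mu_n)\lra 0.
\]
The leftmost group vanishes by the previous step, so the local residue $\partial^{3,2}_v$ is an isomorphism. Because the global and local residue maps are compatible, the composite $H^3(K\,,\,\mu_n^{\otimes 2})\to H^3(K_v\,,\,\mu_n^{\otimes 2})\overset{\sim}{\to} H^2(\kappa(v)\,,\,\mu_n)$ is precisely the global residue map $\partial^{3,2}_v$. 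It follows that the map in the proposition is injective if and only if the total residue map
\[
H^3(K\,,\,\mu_n^{\otimes 2})\lra \bigoplus_{v\in\mathcal{X}^{(1)}}H^2(\kappa(v)\,,\,\mu_n)
\]
is injective, i.e., if and only if the unramified cohomology $H^3_{nr}(K/\mathcal{X}\,,\,\mu_n^{\otimes 2})$ vanishes.

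This last injectivity is the first-level exactness of Kato's complex associated to the regular proper model $\mathcal{X}$, and in the present setting ($R$ a $2$-dimensional, henselian, excellent, normal local domain with finite residue field, and the reduced closed fiber of $\mathcal{X}$ being snc) it is precisely what is established by Saito in \cite{Sai87}. The main obstacle is matching conventions: Saito's theorem is phrased in terms of idele class groups and $K$-theoretic duality, and one must verify that this translates to Kato-complex injectivity at the top degree under the geometric hypotheses on $\mathcal{X}$. Once this identification is made, the proposition is immediate from Saito's result.
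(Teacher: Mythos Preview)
Your reduction is correct and clean: for each $v\in\mathcal{X}^{(1)}$ the residue field $\kappa(v)$ has $\mathrm{cd}_{\ell}\le 2$ for $\ell\nmid\mathrm{char}(k)$, so $H^3(\kappa(v),\mu_n^{\otimes 2})=0$ and the local residue $\partial_v:H^3(K_v,\mu_n^{\otimes 2})\to H^2(\kappa(v),\mu_n)$ is an isomorphism. Hence the proposition is equivalent to the vanishing of $H^3_{nr}(K/\mathcal{X},\mu_n^{\otimes 2})$, i.e.\ injectivity at the top of the Kato complex for $\mathcal{X}$. That part is fine, and the paper itself uses this equivalence in the very next theorem.

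The gap is in your final step. You do not prove the vanishing; you assert that it ``is precisely what is established by Saito'' and then concede that ``the main obstacle is matching conventions.'' But this matching is the entire content of the proposition. Saito's paper does not state Kato-complex exactness for $\mathcal{X}$ in a form one can simply cite; one has to extract it. (The author even notes that the proposition ``grows out of a conversation with S.~Saito''.) The paper's proof carries out this extraction concretely. It splits $\mathcal{X}^{(1)}=P\cup Y^{(0)}$, where $P=(\Spec R)^{(1)}$ corresponds to horizontal curves and $Y^{(0)}$ to the components of the reduced closed fiber $Y$. Two localization sequences are used: one on $U=\mathcal{X}\setminus Y$ giving
\[
H^3(U,\mu_n^{\otimes 2})\xrightarrow{\phi} H^3(K,\mu_n^{\otimes 2})\xrightarrow{\iota}\bigoplus_{\mathfrak{p}\in P}H^3(K_{\mathfrak{p}},\mu_n^{\otimes 2}),
\]
and one on $Y$ with coefficients in $\mathscr{F}=i^*Rj_*\mu_n^{\otimes 2}$ giving
\[
H^3(Y,\mathscr{F})\xrightarrow{\tau}\bigoplus_{\eta\in Y^{(0)}}H^3(\kappa(\eta),\mathscr{F})\xrightarrow{\theta'}\bigoplus_{x\in Y^{(1)}}\mathbb{Z}/n.
\]
Proper base change identifies $H^3(U,\mu_n^{\otimes 2})\cong H^3(Y,\mathscr{F})$ and $H^3(A_\eta,Rj_*\mu_n^{\otimes 2})\cong H^3(\kappa(\eta),\mathscr{F})$, producing a commutative diagram linking the two sequences. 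The specific input from \cite{Sai87} is Lemma~5.13 (p.~361), which says the induced map $\mathrm{Ker}(\phi)\to\mathrm{Ker}(\tau)$ is an isomorphism; this is where the snc hypothesis on the closed fiber enters. From this one reads off $\mathrm{Ker}(\iota)\cong\mathrm{Ker}(\theta)$, so a class in $H^3(K,\mu_n^{\otimes 2})$ dying at every $K_{\mathfrak{p}}$ ($\mathfrak{p}\in P$) and every $K_{\eta}$ ($\eta\in Y^{(0)}$) must vanish.

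In short: your outline is sound, but what you have written is a reformulation of the statement followed by a pointer to the literature. To make it a proof you need either to locate in \cite{Sai87} an explicit Kato-complex exactness statement for this $\mathcal{X}$ (there isn't one in ready-to-cite form), or to carry out the diagram argument above, whose only nontrivial external input is Saito's Lemma~5.13.
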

\begin{proof}
Let $Y$ be the reduced subscheme on the closed fiber of $\mathcal{X}\to\Spec
R$ and $U=\mathcal{X}\setminus Y$. Let $i: Y\to \mathcal{X}$ and $j: U\to\mathcal{X}$ denote
the natural inclusions and put $\mathscr{F}:=i^*Rj_*\mu_n^{\otimes 2}$. Let
$P=(\Spec R)^{(1)}$ be the set of codimension 1 points of $\Spec R$.
We may identify $P$ with the set of closed points of $U$ via the
structural map $\mathcal{X}\to\Spec R$. From localization theories we obtain
exact sequences (cf. \cite[p.358--360]{Sai87})
\begin{equation}\label{eq4p1temp}
H^3(U\,,\,\mu_n^{\otimes 2})\overset{\phi}{\lra} H^3(K\,,\,\mu_n^{\otimes
2})\overset{\iota}{\lra} \bigoplus_{\mathfrak{p}\in
P}H^3(K_{\mathfrak{p}}\,,\,\mu_n^{\otimes 2}) \end{equation}and
\begin{equation}\label{eq4p2temp}
H^3(Y\,,\,\mathscr{F})\lra \bigoplus_{\eta\in
Y^{(0)}}H^3(\kappa(\eta)\,,\,\mathscr{F})\overset{\theta'}{\lra}
\bigoplus_{x\in Y^{(1)}}\mathbb{Z}/n\end{equation}where the map $\iota$ in
\eqref{eq4p1temp} is induced by the natural maps
$H^3(K\,,\,\mu_n^{\otimes 2})\to H^3(K_{\mathfrak{p}}\,,\,\mu_n^{\otimes
2})$. For each $\eta\in Y^{(0)}\subseteq \mathcal{X}^{(1)}$, let $A_{\eta}$
be the completion of the discrete valuation ring
$\mathscr{O}_{\mathcal{X}\,,\,\eta}$. By the functoriality of the functor $Rj_*$, we
have the following commutative diagram
\[\begin{CD}
H^3(\mathcal{X}\,,\,Rj_*\mu_n^{\otimes 2})@>>>
H^3(\mathscr{O}_{\mathcal{X},\,\eta}\,,\,Rj_*\mu_n^{\otimes 2}) @>>> H^3(A_{\eta}\,,\,Rj_*\mu_n^{\otimes 2})\\
@V{\cong}VV  @VV{\cong}V @VV{\cong}V\\
H^3(U\,,\,\mu_n^{\otimes 2}) @>>> H^3(K\,,\,\mu_n^{\otimes 2}) @>>>
H^3(K_{\eta}\,,\,\mu_n^{\otimes 2})
\end{CD}\]
 where the vertical maps are
canonical isomorphisms. On the other hand, we have a commutative
diagram
\[\begin{CD}
H^3(\mathcal{X}\,,\,Rj_*\mu_n^{\otimes 2})@>>>
H^3(\mathscr{O}_{\mathcal{X},\,\eta}\,,\,Rj_*\mu_n^{\otimes 2}) @>>> H^3(A_{\eta}\,,\,Rj_*\mu_n^{\otimes 2})\\
@V{\cong}VV  @VVV @VV{\cong}V\\
H^3(Y\,,\,\mathscr{F}) @>>> H^3(\kappa(\eta)\,,\,\mathscr{F}) @>{\mathrm{id}}>>
H^3(\kappa(\eta)\,,\,\mathscr{F})
\end{CD}\]where the two vertical isomorphisms come from proper base
change. Let
\[
\theta: \bigoplus_{\eta\in Y^{(0)}}H^3(K_{\eta}\,,\,\mu_n^{\otimes
2})\to \bigoplus_{x\in Y^{(1)}}\mathbb{Z}/n\] be the composition of the map
$\theta'$ in \eqref{eq4p2temp} with the canonical isomorphism
\[
\bigoplus_{\eta\in Y^{(0)}}H^3(K_{\eta}\,,\,\mu_n^{\otimes 2})\cong
\bigoplus_{\eta\in Y^{(0)}}H^3(\kappa(\eta)\,,\,\mathscr{F})\,.
\]Putting all things together, we get a commutative diagram with
exact rows
\[
\begin{CD}
H^3(U\,,\,\mu_n^{\otimes 2}) @>\phi>> H^3(K\,,\,\mu_n^{\otimes
2})@>\iota>> \bigoplus_{\mathfrak{p}\in
P}H^3(K_{\mathfrak{p}}\,,\,\mu_n^{\otimes 2})\\
@V{\cong}VV @V{\varphi}VV \\
H^3(Y\,,\,\mathscr{F}) @>\tau>> \bigoplus_{\eta\in
Y^{(0)}}H^3(K_{\eta}\,,\,\mu_n^{\otimes 2}) @>\theta>>
\bigoplus_{x\in Y^{(1)}}\mathbb{Z}/n
\end{CD}
\]where the map $\varphi$ is induced by the restriction
maps $H^3(K\,,\,\mu_n^{\otimes 2})\to
H^3(K_{\eta}\,,\,\mu_n^{\otimes 2})$. By \cite[p.361,
Lemma$\;$5.13]{Sai87}, the induced map $\mathrm{Ker}(\phi)\to\mathrm{Ker}(\tau)$ is
an isomorphism. Hence, $\varphi$ induces an isomorphism
$\mathrm{Ker}(\iota)\cong \mathrm{Ker}(\theta)$. In particular, an element $\zeta\in
H^3(K\,,\,\mu_n^{\otimes 2})$ vanishes if and only if
$\iota(\zeta)=0=\varphi(\zeta)$. The result then follows immediately
since $\mathcal{X}^{(1)}=P\cup Y^{(0)}$.
\end{proof}

An application of Prop.$\;$\ref{prop4p1temp} that we will need is the following variant of \cite[Thm.$\;$3.5]{PS10} (see also \cite[Coro.$\;$4.3]{PS10b}).

\begin{thm}\label{thm4p2NEW}
Let $R$ be a $2$-dimensional, henselian, excellent local domain with finite residue field $k$ and fraction field $K$. Let $q$ be a prime number that is different from the characteristic of $k$. If $\mu_q\subseteq K$, then every element in $H^3(K\,,\,\mu_q^{\otimes 3})$ is a symbol.
\end{thm}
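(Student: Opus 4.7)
The plan is to adapt the strategy of Parimala--Suresh \cite[Thm.$\;$3.5]{PS10} to the present henselian setting, the principal new ingredient being the local-global principle of Prop.$\;$\ref{prop4p1temp}. Since $\mu_q\subseteq K$, throughout we identify $\mu_q^{\otimes j}$ with $\mu_q$ as Galois modules.

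Given $\zeta\in H^3(K\,,\,\mu_q^{\otimes 3})$, we aim to construct $f\in K^*$ such that $\zeta_L=0$ in $H^3(L\,,\,\mu_q^{\otimes 3})$, where $L:=K(\sqrt[q]{f})$. By the standard exact sequence for a cyclic Kummer extension of degree $q$ (a consequence of the Tate cohomology of $\mathrm{Gal}(L/K)$ together with $\mu_q\subseteq K$),
\[
\Br(K)[q]\overset{\cup(f)}{\lra} H^3(K\,,\,\mu_q^{\otimes 3})\overset{\mathrm{res}}{\lra} H^3(L\,,\,\mu_q^{\otimes 3})\,,
\]
this forces $\zeta=\eta\cup(f)$ for some $\eta\in\Br(K)[q]$. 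If moreover $f$ is arranged so that $\eta$ has index $q$, then Thm.$\;$\ref{thm3p16temp} applies to $\eta$, giving $\eta=(a\,,\,b)$ with $a\,,\,b\in K^*$ (using $\mu_q\subseteq K$), so that $\zeta=(a\,,\,b\,,\,f)$ is a symbol as required.

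To achieve $\zeta_L=0$, we would first fix a regular proper model $\mathcal{X}\to\Spec R$ on which the ramification divisor of $\zeta$, defined via residues $\partial_C\zeta\in H^2(\kappa(C)\,,\,\mu_q^{\otimes 2})=\Br(\kappa(C))[q]$ at codimension $1$ points $C\in\mathcal{X}^{(1)}$, is a simple normal crossing divisor. Next we would construct $f\in K^*$ with prescribed valuations along each ramified component so that $\zeta_L$ becomes unramified at every divisorial valuation of $L$. This $H^3$-analogue of Prop.$\;$\ref{prop3p14temp} follows the same template of Saltman's analysis at curve, cold, and chilly points of $\Ram_{\mathcal{X}}(\zeta)$, possibly after blow-ups to remove cool points and chilly loops (Prop.$\;$\ref{prop3p9temp}). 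Once $\zeta_L$ is unramified at every divisorial valuation, the local-global principle (Prop.$\;$\ref{prop4p1temp} applied to the integral closure of $R$ in $L$) reduces vanishing to each completion $L_w$. At such $w$, the unramified class lies in $H^3_{\et}(\mathscr{O}_w\,,\,\mu_q^{\otimes 3})\cong H^3(\kappa(w)\,,\,\mu_q^{\otimes 3})$, and this group vanishes because by Coro.$\;$\ref{coro3p19temp} the residue field $\kappa(w)$ has cohomological dimension at most $2$ (being finite over either a curve function field over $k$ or the fraction field of a henselian discrete valuation ring with finite residue field).

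The principal difficulty is the simultaneous requirement that the residual Brauer class $\eta$ have index exactly $q$, rather than only $q^2$ as guaranteed by Thm.$\;$\ref{thm3p6temp}. This must be verified via Coro.$\;$\ref{coro3p20temp}: the residual classes of $\eta$ at each component of $\Ram_{\mathcal{X}}(\eta)$ must be split by the corresponding ramification, or equivalently, $\eta$ should exhibit no hot points on a suitable blow-up of $\mathcal{X}$. Arranging this alongside the ramification-killing condition on $f$ requires a careful coordination of the valuations and unit reductions of $f$ with the pre-existing ramification pattern of $\zeta$ at each nodal point; we expect this to be accomplished by a suitable adaptation of the multi-property construction in \cite[Thm.$\;$4.6]{Salt07}, and it is here that we anticipate the technical core of the proof to lie.
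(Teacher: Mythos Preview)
Your strategy is the same as the paper's---this is the Parimala--Suresh method---but the paper executes it far more economically. Rather than re-deriving the construction of $f$ and the index-$q$ bound on $\eta$ from scratch, the paper simply verifies the two abstract hypotheses of \cite[Thm.~4.2]{PS10b}: first, that for any regular proper model $\mathcal{X}\to\Spec R$ one has $H^3_{nr}(K/\mathcal{X}^{(1)},\mu_q^{\otimes 3})=0$ (this is where Prop.~\ref{prop4p1temp} enters, together with the observation via Coro.~\ref{coro3p19temp} that $H^3_{nr}(K_v,\mu_q^{\otimes 3})=0$ for each $v\in\Omega_R$); and second, that $\mathcal{X}$ is a \emph{$q$-special surface} in the sense of \cite[\S3]{PS10b}, which follows directly from the dichotomy of Lemma~\ref{lemma3p18temp}. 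Once these are checked, \cite[Thm.~4.2]{PS10b} gives the conclusion.

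The ``technical core'' you anticipate---building $f$ so that $\zeta_L$ is everywhere unramified while the resulting $\eta$ has no hot points---is exactly what \cite[Thm.~4.2]{PS10b} packages, and the $q$-special hypothesis is precisely the structural input on residue fields of curves on $\mathcal{X}$ that makes that construction go through. So your sketch is not wrong, but what you flag as the principal unfinished step is already available as a citable result; you should invoke it rather than redo it. Note also that the relevant vanishing input is $H^3_{nr}$ over $K$ itself (a hypothesis of the cited theorem), not over $L$ as you suggest; the passage to $L$ happens inside the proof of \cite[Thm.~4.2]{PS10b}.
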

\begin{proof}
First note that as in \cite[Coro.$\;$1.2]{PS10}, one can show that for any $v\in\Omega_R$, the kernel
$H^3_{nr}(K_v\,,\,\mu_q^{\otimes 3})$ of the residue map
\[
H^3(K_v\,,\,\mu_q^{\otimes 3})\overset{\partial_v}{\lra}
H^2(\kappa(v)\,,\,\mu_q^{\otimes 2})
\]is trivial.  Indeed, by Coro.$\;$\ref{coro3p19temp}, the residue field $\kappa(v)$ is
either a function field in one variable over $k$ or the fraction
field of a henselian discrete valuation ring $B$ whose residue field
$k'$ is finite over $k$. In the former case, we have
$\mathrm{cd}_q(\kappa(v))\le 2$ by \cite[p.93, Prop.$\;$11]{Ser2}
and hence $H^3(\kappa(v)\,,\,\mu_q^{\otimes 3})=0$. In the latter
case, the exact sequence
\[
H^3(k'\,,\,\mu_q^{\otimes 3})\cong H^3_{\et}(B\,,\,\mu_q^{\otimes
3})\lra H^3(\kappa(v)\,,\,\mu_q^{\otimes 3})\lra
H^2(k'\,,\,\mu_q^{\otimes 2})
\]implies $H^3(\kappa(v)\,,\,\mu_q^{\otimes 3})=0$, since $\mathrm{cd}(k')\le 1$.

Let $\mathcal{X}\to\Spec R$ be a regular proper model. Since $H^3_{nr}(K_v,\,\mu_q^{\otimes 3})=0$ for all $v\in\Omega_R$, it follows that the
kernel of the natural map
\[
H^3(K\,,\,\mu_q^{\otimes 3})\lra
\prod_{v\in\mathcal{X}^{(1)}}H^3(K_v\,,\,\mu_q^{\otimes 3})\,
\]coincides with
\[
H^3_{nr}(K/\mathcal{X}^{(1)}\,,\,\mu^{\otimes 3}_q):=\bigcap_{v\in\mathcal{X}^{(1)}}\mathrm{Ker}\left(\partial_v\,: H^3(K\,,\,\mu_q^{\otimes 3})\lra H^2(\kappa(v)\,,\,\mu_q^{\otimes 2})\right)\,.
\]As $\mu_q\subseteq K$, it follows from
Prop.$\;$\ref{prop4p1temp} that
$H^3_{nr}(K/\mathcal{X}^{(1)}\,,\,\mu^{\otimes 3}_q)=0$. On the other hand, Lemma$\;$\ref{lemma3p18temp} implies that $\mathcal{X}$
is a $q$-special surface in the sense of \cite[$\S$3]{PS10b}. So the result follows from \cite[Thm.$\;$4.2]{PS10b}.
\end{proof}

\subsection{Local-global principle for quadratic forms}

Thanks to Coro.$\;$\ref{coro3p22temp} we are now in a position to
prove the local-global principle for quadratic forms of
rank 5 with respect to the set $\Omega_R$ of divisorial valuations as asserted
in Thm.$\;$\ref{thm1p4temp}. Standard notations from the algebraic theory of quadratic forms (as in \cite{Lam} or
\cite{Sch85}) will be used as of now.

\begin{proof}[Proof of Thm.$\;\ref{thm1p4temp}$]
We follow the ideas in the proof of \cite[Thm.$\;$3.6]{CTOP}.
Let $\phi$ be a 5-dimensional nonsingular quadratic form over $K$.
Assume that $\phi_v$ is isotropic over $K_v$ for every
$v\in\Omega_R$.

The 6-dimensional form $\psi:=\phi\bot \langle -\det(\phi)\rangle$
is similar to a so-called Albert form $\langle
a\,,\,b\,,\,-ab\,,\,-c\,,\,-d\,,\,cd\rangle$. By the general theory of
Albert forms (cf. \cite[p.14, Thm.$\;$1.5.5]{GS}), the form $\langle
a\,,\,b\,,\,-ab\,,\,-c\,,\,-d\,,\,cd\rangle$ is isotropic if and
only if the biquaternion algebra $D:=(a\,,\,b)\otimes(c\,,\,d)$ is
not a division algebra. By assumption, for every $v\in\Omega_R$,
$\psi_v$ is isotropic over $K_v$, so the biquaternion algebra
$D_v=(a\,,\,b)_{K_v}\otimes(c\,,\,d)_{K_v}$ is not a division
algebra. The index of $D_v$ must be smaller than 4, which is the
degree. Therefore, $D_v$ is Brauer equivalent to a quaternion
algebra over $K_v$. By Coro.$\;$\ref{coro3p22temp},
$D=(a\,,\,b)\otimes (c\,,\,d)$ is Brauer equivalent to a quaternion
algebra over $K$. In particular, $D$ is not a division algebra over
$K$. Hence $\psi$ is isotropic over $K$. This implies that $\phi$
may be written in the form
\[
\phi=\det(\phi). \langle 1\,,\,a\,,\,b\,,\,c\,,\,abc\rangle
\]over $K$. In particular, $\phi$ is similar to a subform of the 3-fold Pfister form
\[
\rho=\langle\!\langle\,-a\,,\,-b\,,\,-c\rangle\!\rangle:=\langle
1\,,\,a\rangle\otimes \langle 1\,,\,b\rangle\otimes \langle
1\,,\,c\rangle\,.
\]By Merkurjev's theorem (cf. \cite[p.129, Prop.$\;$2]{Ara84}), the form $\rho$ is
isotropic if and only if the symbol class
$(-a\,,\,-b\,,\,-c)$ vanishes. For each $v\in\Omega_R$, as the
subform $\phi_v$ of $\rho_v$ is isotropic over $K_v$, we have
$(-a\,,\,-b\,,\,-c)=0$ in $H^3(K_v\,,\,\mathbb{Z}/2)$. Then it follows from
Prop.$\;$\ref{prop4p1temp} that $(-a\,,\,-b\,,\,-c)=0$ in
$H^3(K\,,\,\mathbb{Z}/2)$ (noticing that we may assume $R$ is normal). Thus,
the Pfister form $\rho$ is isotropic over $K$ and hence hyperbolic
(\cite[p.319, Thm.$\;$1.7]{Lam}). The form $\rho$ then contains a
4-dimensional totally isotropic subspace, which must intersect the
underlying space of the 5-dimensional subform $\phi$ in a nontrivial
subspace. Hence, $\phi$ is isotropic over $K$.
\end{proof}

\begin{remark}\label{remark4p2temp}
Let $R$ be a $2$-dimensional, henselian, excellent local domain with
fraction field $K$ and residue field $k$. Assume the characteristic
of $k$ is not 2. We record results and open questions on
local-global principle for isotropy of quadratic forms over $K$ as
far as we know.

(1) When the residue field $k$ is finite, Thm.$\;$\ref{thm1p4temp}
establishes the local-global principle with respect to discrete
valuations in $\Omega_R$ only for rank 5 forms. If $R$ is of the
form $R=A[\![t]\!]$, where $A$ is a complete discrete valuation
ring, then the same local-global principle is proved for quadratic
forms of any rank $\ge 5$ in \cite[Thm.$\;$1.2]{Hu10}. There the
residue field may be any $C_1$-field.

(2) For general $R$ with finite residue field, it remains an open
question whether the local-global principle holds for quadratic
forms of rank 6, 7 or 8.

(3) Generalizing an earlier result of \cite{CTOP},
\cite[Thm.$\;$1.1]{Hu10} proves the local-global principle for forms
of rank 3 or 4 when the residue field $k$ is arbitrary (not
necessarily finite, $C_1$ or $B_1$).

(4) The above results do not extend to binary forms even if $k$ is
finite. For example, Jaworski \cite[Thm.$\;$1.5]{Ja} shows that if
$K$ is the fraction field of the ring
\[
R=k[\![x,\,y,\,z]\!]/(z^2-(y^2-x^3)(x^2-y^3))\,,
\]then $y^2-x^3$ is a square in $K_v$ for every discrete valuation $v$ of
$K$, but it is not a quare in $K$.
\end{remark}

\subsection{The $u$-invariant}\label{sec4p2}

Let $F$ be a field of characteristic $\neq 2$. We will denote by
$W(F)$ the Witt ring of quadratic forms over $F$ and by $I(F)$ the
fundamental ideal. For $a_1\,,\dotsc, a_n\in F^*$, $\langle\!\langle
a_1\,,\dotsc, a_n\rangle\!\rangle$ denotes the $n$-fold Pfister form
$\langle 1\,,\,-a_1\rangle\otimes\cdots\otimes \langle
1\,,\,-a_n\rangle$. The $n$-th power $I^n(F)$ of the fundamental
ideal $I(F)$ is generated by the $n$-fold Pfister forms. Recall that the
$u$-\empha{invariant} of $F$, denoted $u(F)$, is defined as the supremum of dimensions of anisotropic quadratic forms over $F$ (so $u(F)=\infty$, if
such dimensions can be arbitrarily large).

\

Let $R$ be a $2$-dimensional, henselian, excellent local domain with
fraction field $K$ and residue field $k$. Assume that the residue
field $k$ of $R$ is finite of characteristic $\neq 2$. Then the
inequality $u(K)\ge 8$ may be easily seen as follows: Take any
discrete valuation $v$ corresponding to a height 1 prime ideal of
the normalization of $R$. It follows from
Coro.$\;$\ref{coro3p19temp} and a theorem of Springer (\cite[p.209,
Coro.$\;$2.6]{Sch85}) that $u(\kappa(v))=4$. Take a 4-dimensional
diagonal form $\varphi$ over $K$ whose coefficients  are units for $v$
such that its residue form $\ov{\varphi}$ over $\kappa(v)$ is
anisotropic and let $\pi\in K$ be a uniformizer for $v$. Then
$\varphi\bot\pi.\varphi$ is an 8-dimensional form over $K$ which is
anisotropic over $K_v$.

The rest of this subsection is devoted to the proof of
Thm.$\;$\ref{thm1p5temp}, which asserts that if $k$ is a finite
field of characteristic $\neq 2$, then $u(K)\le 8$ (or equivalently
$u(K)=8$ according to the proceeding paragraph). Basically, we will
follow the method of Parimala and Suresh developed in \cite{PS10} (see also \cite[Appendix]{PS10b}).

\begin{prop}[{\cite[Prop.$\;$4.3]{PS10}}]\label{prop4p4NEW}
Let $F$ be a field of characteristic $\neq 2$. Assume the following properties hold:

$(\mathrm{i})$ $I^4(F)=0$.

$(\mathrm{ii})$ Every element of $I^3(F)$ is represented by a $3$-fold Pfister form.

$(\mathrm{iii})$ Every element of $H^2(F\,,\,\mathbb{Z}/2)$ is the sum of two symbols.

$(\mathrm{iv})$ If $\phi$ is a $3$-fold Pfister form and $\varphi_2$ is $2$-dimensional quadratic form over $F$, then there exist
$f,\,a,\,b\in F^*$ such that $f$ is a value of $\varphi_2$ and $\phi=\langle 1,\,f\rangle\otimes \langle 1,\,a\rangle\otimes\langle 1,\,b\rangle$ in
the Witt group $W(F)$.

$(\mathrm{v})$ If $\phi=\langle 1,\,f\rangle\otimes \langle 1,\,a\rangle\otimes\langle 1,\,b\rangle$ is a $3$-fold Pfister form and $\varphi_3$ is $3$-dimensional quadratic form over $F$, then there exist
$g,\,h\in F^*$ such that $g$ is a value of $\varphi_3$ and $\phi=\langle 1,\,f\rangle\otimes \langle 1,\,g\rangle\otimes\langle 1,\,h\rangle$ in
the Witt group $W(F)$.

Then $u(F)\le 8$.
\end{prop}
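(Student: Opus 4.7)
The goal is to show that every $9$-dimensional quadratic form $q$ over $F$ is isotropic. After scaling we may assume $q=\langle 1\rangle\perp q_0$ with $\dim q_0=8$, and the task reduces to showing that $q_0$ represents $-1$. The strategy combines the structural hypotheses $(\mathrm{i})$--$(\mathrm{iii})$, which pin down the Witt ring modulo $I^4$ in terms of Galois cohomology, with the representability hypotheses $(\mathrm{iv})$--$(\mathrm{v})$, which provide the flexibility to rewrite any $3$-fold Pfister form so that one of its slots is a value of a prescribed $2$- or $3$-dimensional subform of $q$.

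The first step is to replace $q$ by a form amenable to $I^n$-analysis. Consider $\widetilde q:=q\perp\langle -\det q\rangle$ of dimension $10$, whose discriminant is trivial, so $\widetilde q\in I^2(F)$. Its Clifford invariant $c(\widetilde q)\in H^2(F,\mathbb{Z}/2)$ is realized by a sum of a few $2$-fold Pfister forms; subtracting these and using the Arason--Pfister theorem, we land in $I^3(F)$. By $(\mathrm{iii})$ the associated Arason invariant in $H^3(F,\mathbb{Z}/2)$ splits as a sum of at most two symbols, and by $(\mathrm{ii})$ each such symbol is represented by a single $3$-fold Pfister form. Using $(\mathrm{i})$ to promote these cohomological identifications to genuine equalities in $W(F)$, we obtain a Witt-ring identity expressing $q$ as a sum of at most four $3$-fold Pfister forms $\pi_1,\dots,\pi_r$ modulo lower-dimensional correction terms coming from the initial adjustments.

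The decisive step is to iteratively apply $(\mathrm{iv})$ and $(\mathrm{v})$ to each $\pi_i$: at each stage, choose a $2$- or $3$-dimensional subform $\varphi\subseteq q$ and rewrite $\pi_i$ so that one of its slots is a value of $\varphi$. Each such rewrite allows one to cancel a diagonal entry of $q$ against a factor of $\pi_i$, dropping the anisotropic dimension of the current Witt-equivalent representative of $q$ by two. Iterating, one reduces $q$ in $W(F)$ to a form of dimension strictly smaller than $9$, which forces the original $9$-dimensional $q$ to have an isotropic vector.

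The main obstacle is the bookkeeping in this last step: one must select at each stage the right subforms of $q$ and verify that successive applications of $(\mathrm{iv})$ and $(\mathrm{v})$ remain compatible, in the sense that the slot produced at each step is genuinely a value of the chosen subform (not merely an element of $F^{*}$), so that the cancellation inside $q$ is effective. Hypothesis $(\mathrm{i})$ is what converts Witt-equivalent dimension drops into true anisotropic dimension drops; hypotheses $(\mathrm{ii})$ and $(\mathrm{iii})$ bound the number of Pfister pieces to be absorbed, so that the inductive procedure terminates in finitely many steps and one arrives at the sought isotropy of $q$.
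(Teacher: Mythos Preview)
The paper does not supply its own proof of this proposition; it is quoted verbatim from \cite[Prop.~4.3]{PS10} and used as a black box. So there is nothing in the paper to compare against except the original Parimala--Suresh argument.

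Your proposal, however, is a strategy outline rather than a proof, and it contains a genuine gap that you yourself flag but do not close. Two points in particular:

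\emph{First}, the reduction through $I^2$ is not controlled by the hypotheses. You assert that the Clifford invariant of $\widetilde q$ ``is realized by a sum of a few $2$-fold Pfister forms'' and then arrive at ``at most four $3$-fold Pfister forms''. But none of $(\mathrm{i})$--$(\mathrm{v})$ bounds the symbol length in $H^2(F,\mathbb{Z}/2)$; hypothesis $(\mathrm{iii})$ concerns only $H^3$. Without such a bound the number of Pfister pieces is not a priori finite in any useful sense, and your iteration has no reason to terminate.

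\emph{Second}, the cancellation mechanism is not what you describe. Knowing that a slot $f$ of some $\pi_i$ is a \emph{value} of a subform $\varphi\subseteq q$ does not allow you to ``cancel a diagonal entry of $q$ against a factor of $\pi_i$'' and drop the anisotropic dimension by two. What one actually needs is that a binary piece $\langle 1,f\rangle$ (up to scaling) sits inside $q$ in a way compatible with the Pfister factorization, and arranging this is precisely the content of the argument, not a bookkeeping afterthought.

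The Parimala--Suresh proof avoids the first problem entirely: it never decomposes the Clifford invariant. Instead one fixes from the outset a decomposition of the $9$-dimensional form into blocks of sizes $1$, $2$, $3$, $3$, and applies $(\mathrm{iv})$ and $(\mathrm{v})$ in a specific order so that a \emph{single} $3$-fold Pfister form is produced whose three slots are values of the prescribed blocks. Hypotheses $(\mathrm{i})$ and $(\mathrm{ii})$ then force a direct Witt identity yielding isotropy. The role of $(\mathrm{iv})$ and $(\mathrm{v})$ is exactly to eliminate the open-ended iteration you propose; you should consult \cite{PS10} for the explicit construction.
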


Property (i) in the above proposition is verified for the field $K$ by using the following deep well-known theorem.

\begin{thm}[{Artin--Gabber}]\label{thm4p6temp}
Let $R$ be a $2$-dimensional henselian excellent local domain with
fraction field $K$ and finite residue field $k$.

Then for every prime number $p$ different from the characteristic of
$k$, the $p$-cohomological dimension $\mathrm{cd}_p(K)$ of $K$ is
$3$.
\end{thm}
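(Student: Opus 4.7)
The statement naturally splits into the two inequalities $\mathrm{cd}_p(K)\ge 3$ and $\mathrm{cd}_p(K)\le 3$; I would handle them separately, with the upper bound being the serious part.

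For the lower bound, my plan is to exhibit a nonzero symbol in $H^3(K,\mu_p^{\otimes 3})$. After replacing $K$ by $K(\mu_p)$, a prime-to-$p$ extension that does not change the $p$-cohomological dimension, I may assume $\mu_p\subseteq K$. Fix a regular proper model $\mathcal{X}\to\Spec R$ and a closed point $P$ on its closed fiber; let $(\pi_1,\pi_2)$ be a regular system of parameters of $\mathscr{O}_{\mathcal{X},P}$. Since $\kappa(P)$ is a finite extension of $k$ and $p$ is invertible in $k$, the group $\kappa(P)^*/\kappa(P)^{*p}$ is nontrivial, so I can choose a unit $u\in\mathscr{O}_{\mathcal{X},P}^*$ whose reduction $\bar{u}$ is not a $p$-th power in $\kappa(P)$. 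Consider the symbol class $\alpha=(u,\pi_1,\pi_2)\in H^3(K,\mu_p^{\otimes 3})$. Taking successive residues along the flag of parameters at $P$ — first the residue at the divisorial valuation $v_2$ of $K$ cut out by $\pi_2$ on $\mathcal{X}$, then the residue at the valuation of $\kappa(v_2)$ defined by the restriction of $\pi_1$ — produces $(\bar{u},\bar{\pi}_1)\in H^2(\kappa(v_2),\mu_p^{\otimes 2})$ and finally the class $\bar{u}(P)\in H^1(\kappa(P),\mu_p)=\kappa(P)^*/\kappa(P)^{*p}$, which is nonzero by construction. Hence $\alpha\neq 0$ and $\mathrm{cd}_p(K)\ge 3$.

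For the upper bound, I would invoke the general Artin--Gabber theorem: for an excellent henselian local ring $A$ of dimension $d$ with residue field $\kappa$ in which $p$ is invertible, one has $\mathrm{cd}_p(\mathrm{Frac}(A))\le d+\mathrm{cd}_p(\kappa)$. Applied to our $R$ with $d=2$ and $\kappa=k$ finite (so $\mathrm{cd}_p(k)=1$), this yields $\mathrm{cd}_p(K)\le 3$. Sketching the proof: one starts from a regular proper model $\mathcal{X}\to\Spec R$ and applies the proper base change theorem, which, since $R$ is henselian, identifies the \'etale cohomology of $\mathcal{X}$ with $p$-torsion constructible coefficients with that of the reduced closed fiber $X_0=(\mathcal{X}\times_R k)_{\red}$, a proper $k$-scheme of dimension $\le 1$; since $\mathrm{cd}_p(k)=1$, a standard dimension estimate gives $H^i_{\et}(\mathcal{X},\mathcal{F})=0$ for $i>3$ on such sheaves. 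To pass from \'etale cohomology of $\mathcal{X}$ to Galois cohomology of the function field $K=k(\mathcal{X})$, I would use Gabber's absolute cohomological purity, which yields a Bloch--Ogus/Gersten-type spectral sequence expressing $H^i(K,\cdot)$ in terms of contributions of codimension-$c$ points of $\mathcal{X}$ with coefficients in shifted twists. Bounding each of these contributions using the bound $\mathrm{cd}_p\le 3-c$ at a codimension-$c$ point (itself reduced to the henselian DVR case) gives the desired vanishing $H^i(K,M)=0$ for all $p$-torsion Galois modules $M$ and $i>3$.

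The main obstacle is the upper bound: nonregularity of $R$ forces the detour through a resolution $\mathcal{X}\to\Spec R$, and the passage from $\mathcal{X}$ to $K$ rests on Gabber's absolute purity, which is the deep input. In our setting the residue field $k$ is perfect, so one avoids the additional technicalities that enter the general Artin--Gabber theorem for imperfect $\kappa$.
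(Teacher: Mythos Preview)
The paper does not prove this theorem; it records it as a known deep result, citing Artin (SGA~4, Exp.~XIX, Coro.~6.3) for the equicharacteristic case and Gabber for the mixed-characteristic case, and mentions an alternative argument due to Kato along the lines of \cite[\S5]{Sai86}. There is therefore no proof in the paper to compare against.

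Your lower bound argument is correct and standard: producing a nonzero triple symbol via iterated residues along a flag at a closed point of a regular model shows $\mathrm{cd}_p(K)\ge 3$.

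Your upper bound sketch, however, has a real gap in the passage from $\mathcal{X}$ to $K$. Proper base change does give $H^i_{\et}(\mathcal{X},\mathcal{F})=0$ for $i>3$ and constructible $p$-torsion $\mathcal{F}$, but the Bloch--Ogus/coniveau spectral sequence you invoke converges to $H^*_{\et}(\mathcal{X})$, not to $H^*(K)$; it does not ``express $H^i(K,\cdot)$ in terms of contributions of codimension-$c$ points''. Read literally, your bound ``$\mathrm{cd}_p\le 3-c$ at a codimension-$c$ point'' is, at $c=0$, precisely the statement $\mathrm{cd}_p(K)\le 3$ you are trying to prove, so the scheme is circular. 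The difficulty is genuine: passing to open subschemes can \emph{raise} \'etale cohomological dimension, so a bound for $\mathcal{X}$ does not automatically localize to its generic point via purity alone. The cited proofs proceed differently: one reduces via the Hochschild--Serre spectral sequence for the Galois extension $\mathrm{Frac}(R^{sh})/K$ (with group $G_k$, of $p$-cohomological dimension $1$) to showing $\mathrm{cd}_p(\mathrm{Frac}(R^{sh}))\le 2$, and it is this strictly henselian statement that carries the real content (Artin's fibration arguments in equal characteristic; Gabber's methods in mixed characteristic). You are right that absolute purity is one of the deep inputs there, but the way you assemble the pieces does not yield a valid argument.
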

When $K$ and $k$ have the same characteristic, this follows from a
theorem of Artin (\cite[Exp.$\;$XIX, Coro.$\;$6.3]{SGA}). When the
characteristic of $K$ is different from that of $k$, Gabber proved
the analog of Artin's result. A different proof due to Kato may be given along the lines of the case treated in \cite[$\S$5]{Sai86}.

\begin{coro}\label{coro4p7temp}
Let $R$ be a $2$-dimensional, henselian, excellent local domain with
finite residue field of characteristic $\neq 2$. Let $K$ be the
fraction field of $R$.

Then $I^4(K)=0$.
\end{coro}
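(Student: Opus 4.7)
The plan is to deduce $I^4(K)=0$ from three inputs: the bound on the mod-$2$ cohomological dimension supplied by Theorem \ref{thm4p6temp}, the Milnor conjecture (Voevodsky's norm residue isomorphism), and the Arason--Pfister Hauptsatz.

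First, I would apply Theorem \ref{thm4p6temp} with $p=2$ to obtain $\mathrm{cd}_2(K)\le 3$; in particular $H^n(K,\mathbb{Z}/2)=0$ for every integer $n\ge 4$. Second, by Voevodsky's theorem the norm residue map
\[
e_n\;:\;I^n(K)/I^{n+1}(K)\lra H^n(K,\mathbb{Z}/2),\qquad \langle\!\langle a_1,\dots,a_n\rangle\!\rangle\longmapsto (a_1)\cup\cdots\cup(a_n),
\]
is an isomorphism for every $n\ge 1$. Combining these two facts, $I^n(K)=I^{n+1}(K)$ for all $n\ge 4$, so $I^4(K)=I^5(K)=I^6(K)=\cdots$. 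Finally, the Arason--Pfister Hauptsatz asserts that any anisotropic form in $I^n(K)$ has dimension at least $2^n$, which forces $\bigcap_{n\ge 0}I^n(K)=0$. Hence
\[
I^4(K)\subseteq\bigcap_{n\ge 4}I^n(K)=0.
\]

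There is essentially no internal obstacle: the statement reduces to three external deep theorems, and the only care needed is to check the hypotheses of each (which are immediate here). If one wishes to avoid invoking the Milnor conjecture at the level of $I^n/I^{n+1}$ in full generality, one can argue Pfister-by-Pfister: $I^4(K)$ is additively generated by $4$-fold Pfister forms, and the vanishing of $H^4(K,\mathbb{Z}/2)$ combined with the injectivity of $e_4$ (due to Orlov--Vishik--Voevodsky) shows every such Pfister form lies in $I^5(K)$; since a Pfister form is either anisotropic of its full dimension $2^n$ or hyperbolic, Arason--Pfister forces it to be hyperbolic, whence $I^4(K)=0$.
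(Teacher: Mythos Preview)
Your proof is correct. The paper's own proof is a one-line citation: it combines Theorem~\ref{thm4p6temp} (to get $\mathrm{cd}_2(K)\le 3$) with a result of Arason--Elman--Jacob \cite[Coro.~4]{AEJ86}, which establishes directly that $\mathrm{cd}_2(K)\le 3$ forces $I^4(K)=0$.

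The difference lies only in the external input. The Arason--Elman--Jacob argument dates from 1986 and therefore does not use Voevodsky's theorem; it relies instead on Merkurjev's theorem and ad hoc arguments specific to low cohomological dimension. Your route via the full norm residue isomorphism plus Arason--Pfister is conceptually cleaner and works uniformly in all degrees, but invokes much deeper machinery than is strictly necessary for this particular statement. Your alternative sketch at the end (injectivity of $e_4$ on Pfister forms) is closer in spirit to what AEJ actually do, though even there you cite Orlov--Vishik--Voevodsky, which again postdates and subsumes the more elementary argument the paper has in mind.
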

\begin{proof}
This follows by combining Thm.$\;$\ref{thm4p6temp} and a result of
Arason, Elman and Jacob (\cite[Coro.$\;$4]{AEJ86}).
\end{proof}

\begin{lemma}\label{lemma4p9temp}
Let $R$ be a $2$-dimensional, henselian, excellent local domain with
fraction field $K$ and residue field $k$.  Assume that $k$ is a
$B_1$ field of characteristic $\neq 2$.

Then every element in $H^2(K\,,\,\mathbb{Z}/2)$ is the sum of two symbols.
\end{lemma}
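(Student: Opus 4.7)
The plan is to combine Theorem~\ref{thm3p6temp}, which bounds the index of $\alpha$ by the square of its period, with Albert's classical theorem on biquaternion algebras, which converts this index bound into an explicit decomposition as a sum of two quaternion symbols.

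Given $\alpha\in H^2(K,\mathbb{Z}/2)=\Br(K)[2]$, observe that $2$ is invertible in $k$ (since $\mathrm{char}(k)\neq 2$) and that $k$ has property $B_1$ by hypothesis. Hence Theorem~\ref{thm3p6temp}, applied with $n=2$, yields $\ind(\alpha)\mid 4$.

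If $\ind(\alpha)\in\{1,2\}$, then $\alpha$ is either trivial or the class of a single quaternion algebra, and is certainly a sum of at most two symbols. If $\ind(\alpha)=4$, then the underlying division algebra $D$ of $\alpha$ has degree $4$ and period $2$; by Albert's theorem, such a $D$ is necessarily a biquaternion algebra, i.e., Brauer equivalent to $(a,b)\otimes_K(c,d)$ for some $a,b,c,d\in K^*$. Consequently $\alpha=(a,b)+(c,d)$ in $H^2(K,\mathbb{Z}/2)$ exhibits $\alpha$ as a sum of two symbols, as desired.

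The genuine work is already packaged in Theorem~\ref{thm3p6temp}, whose proof passes through Saltman's splitting result (Theorem~\ref{thm3p3temp})---to realise $\alpha$ as a class killed by a biquadratic extension $K(\sqrt{f},\sqrt{g})/K$---and the vanishing of the unramified Brauer group of a regular proper model with $B_1$ residue field (Corollary~\ref{coro2p5temp}), applied to the integral closure $R'$ of $R$ in $K(\sqrt{f},\sqrt{g})$, whose residue field is finite (hence $B_1$) over $k$ by Example~\ref{exam2p3temp}(3). This ramification analysis is the main obstacle; once it is in hand, Albert's theorem is a classical algebraic fact with no further dependence on the arithmetic of $K$.
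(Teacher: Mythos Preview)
Your proof is correct and follows essentially the same route as the paper: apply Theorem~\ref{thm3p6temp} with $n=2$ to bound the index by $4$, then invoke Albert's theorem to express the class as a tensor product of two quaternion algebras. The additional case distinction and the closing commentary on the content of Theorem~\ref{thm3p6temp} are accurate but not needed for the argument.
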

\begin{proof}
By Thm.$\;$\ref{thm3p6temp}, every element in $\Br(K)[2]\cong
H^2(K\,,\,\mathbb{Z}/2)$ has index dividing 4. A well-known theorem of
Albert (\cite[Chap.$\;$XI, $\S$6, Thm.$\;$9]{Alb}) then implies
that it is the class of a tensor product of two quaternion algebras.
\end{proof}

\begin{proof}[Proof of Thm.$\;\ref{thm1p5temp}$]
We have $I^3(K)\cong H^3(K\,,\,\mathbb{Z}/2)$ in view of
Coro.$\;$\ref{coro4p7temp} (\cite{AEJ86}). Thus every element of
$I^3(K)$ is represented by a 3-fold Pfister form by Thm.$\;$\ref{thm4p2NEW}. That the field $K$
has property (iii) in Prop.$\;$\ref{prop4p4NEW} is proved in Lemma$\;$\ref{lemma4p9temp}. Finally,
the same argument as in the proof of \cite[Appendix, Prop.$\;$3]{PS10b} proves that the field $K$ has properties (iv) and (v) in Prop.$\;$\ref{prop4p4NEW}.
The theorem is thus proved.
\end{proof}

\begin{question}[Suresh]\label{Ques4p11temp}
Let $R,\,K$ and $k$ be as usual and assume the residue field $k$ is an arbitrary (not necessarily finite) field of characteristic $\neq 2$.
It is known that $u(K)=4u(k)$ in each of the following special cases:

(1) $k$ is finite (Theorem$\;$\ref{thm1p5temp}).

(2) $k$ is hereditarily quadratically closed (i.e. every finite extension field of $k$ is quadratically closed). This basically follows from the proof of \cite[Thm.$\;$3.6]{CTOP}.

(3) Assume the residue field $k$ has the following property:

$(*)$ For each finitely generated field extension $L/k$ of transcendence degree $d\le 1$, $u(L)\le 2^d u(k)$.

Then Harbater, Hartmann and Krashen prove that $u(K)=4u(k)$ holds in the following cases:

(3.1) $R=A[\![t]\!]$, where $A$ is a complete discrete valuation ring (\cite[Coro.$\;$4.19]{HHK});

(3.2) $K$ is a finite separable extension of $k(\!(x,\,y)\!)$ (\cite[Coro.$\;4.2$]{HHK11b}).

Question: Is the relation $u(K)=4u(k)$ always true under hypothesis $(*)$? Is it true when assuming moreover $R$ is complete?
\end{question}

\subsection{Torsors under special orthogonal groups}\label{sec4p3}

\begin{thm}\label{thm4p11temp}
Let $R$ be a $2$-dimensional, henselian, excellent local domain with
fraction field $K$ and residue field $k$. Assume $k$ is a finite
field of characteristic $\neq 2$.

Then for any nonsingular quadratic form $\phi$ of rank $\ge 2$ over
$K$, the natural map
\[
H^1(K\,,\,\mathrm{SO}(\phi))\lra\prod_{v\in\Omega_R}H^1(K_v\,,\,\mathrm{SO}(\phi))
\]
is injective.
\end{thm}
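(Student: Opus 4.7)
The plan is to translate the torsor statement into quadratic form language and then climb the filtration of $W(K)$ by powers of the fundamental ideal. Fix a class $\xi\in H^1(K\,,\,\mathrm{SO}(\phi))$ which is trivial at every $v\in\Omega_R$. The pointed set $H^1(K\,,\,\mathrm{SO}(\phi))$ classifies quadratic forms over $K$ of the same rank and the same discriminant as $\phi$, and $\xi$ is trivial if and only if the corresponding form $\psi$ is isometric to $\phi$. The hypothesis translates into $\phi_{K_v}\cong\psi_{K_v}$ for every $v\in\Omega_R$. By Witt cancellation, it suffices to show that $\eta:=\phi\bot(-\psi)$, a form of even rank $2\dim\phi$ with trivial discriminant and thus lying in $I^2(K)$, is hyperbolic over $K$.

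The strategy is to show successively that $\eta\in I^3(K)$ and then $\eta\in I^4(K)$; since $I^4(K)=0$ by Coro.$\;$\ref{coro4p7temp}, this suffices. For the first inclusion, consider the Clifford invariant $e_2(\eta)\in H^2(K\,,\,\mathbb{Z}/2)=\Br(K)[2]$. The hypothesis gives $e_2(\eta)_{K_v}=0$ for every $v\in\Omega_R$. Fix a regular proper model $\mathcal{X}\to\Spec R$. For each $v\in\mathcal{X}^{(1)}$, the residue $\ram_v(e_2(\eta))$ equals the residue of $e_2(\eta)_{K_v}$, hence vanishes; so $e_2(\eta)$ lies in $\Br(\mathscr{O}_{\mathcal{X},\,v})$ (using that $\mathrm{char}\,k\neq 2$). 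By Thm.$\;$\ref{thm2p1temp}$\,$(iii), $e_2(\eta)\in \bigcap_{v\in\mathcal{X}^{(1)}}\Br(\mathscr{O}_{\mathcal{X},\,v})=\Br(\mathcal{X})$, and Coro.$\;$\ref{coro2p5temp}, applicable because the finite field $k$ is a $B_1$-field (Example$\;$\ref{exam2p3temp}$\,$(2)), yields $\Br(\mathcal{X})=0$. Thus $e_2(\eta)=0$ and $\eta\in I^3(K)$.

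Now consider the Arason invariant $e_3(\eta)\in H^3(K\,,\,\mathbb{Z}/2)=H^3(K\,,\,\mu_2^{\otimes 2})$. Since $\eta_{K_v}$ is hyperbolic for every $v\in\Omega_R$, the class $e_3(\eta)$ vanishes in $H^3(K_v\,,\,\mu_2^{\otimes 2})$ for every $v\in\mathcal{X}^{(1)}$. Prop.$\;$\ref{prop4p1temp} applied with $n=2$ asserts precisely that the restriction map $H^3(K\,,\,\mu_2^{\otimes 2})\to\prod_{v\in\mathcal{X}^{(1)}}H^3(K_v\,,\,\mu_2^{\otimes 2})$ is injective (here we may replace $R$ by its normalization without harm), so $e_3(\eta)=0$. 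Combined with $I^4(K)=0$, the Arason--Elman--Jacob isomorphism $I^3(K)\simto H^3(K\,,\,\mathbb{Z}/2)$ (invoked already in the proof of Thm.$\;$\ref{thm1p5temp}) gives that $\eta$ is hyperbolic over $K$, completing the proof. The substantive work, namely the vanishing $\Br(\mathcal{X})=0$ and the cohomological local-global principle of Prop.$\;$\ref{prop4p1temp} coming from Saito's class field theory, has already been carried out in the preceding sections; the main point here is simply to identify the correct invariants of $\eta$ and assemble these inputs in order.
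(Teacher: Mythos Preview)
Your argument is correct and follows essentially the same route as the paper: the paper reduces to the injectivity of $I^2(K)\to\prod_{v\in\Omega_R}I^2(K_v)$ (stated as Lemma~\ref{lemma4p12temp}, whose proof is only sketched by reference to \cite[Thm.~3.10]{CTOP} with Prop.~\ref{prop4p1temp} and Coro.~\ref{coro4p7temp} as the substitute ingredients), and you have simply unpacked that lemma explicitly via the invariants $e_2$ and $e_3$.

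One small point worth tightening: you fix a class $\xi$ that is \emph{locally trivial} and show it is globally trivial, i.e.\ you prove the map has trivial kernel. For a map of pointed sets this does not immediately give injectivity. The paper handles this by starting from two arbitrary classes $[\psi],[\psi']$ with the same local images and forming $\psi-\psi'\in I^2(K)$; your argument goes through verbatim with $\phi$ replaced by $\psi'$. Alternatively, since the statement is for all $\phi$ of rank $\ge 2$, the standard twisting bijection $H^1(K,\mathrm{SO}(\phi))\simto H^1(K,\mathrm{SO}(\psi))$ reduces injectivity to triviality of the kernel for every base form, which is what you prove.
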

\begin{proof}
Let $\psi\,,\,\psi'$ be nonsingular quadratic forms representing classes in
$H^1(K\,,\,\mathrm{SO}(\phi))$. As they have the same dimension, the
forms $\psi$ and $\psi'$ are isometric if and only if they represent
the same class in the Witt group. Since $\psi$ and $\psi'$ also have the
same discriminant, it follows from \cite[p.82, Chapt.$\;$2,
Lemma$\;$12.10]{Sch85} that $\psi-\psi'\in I^2(K)$. Now it suffices
to apply Lemma$\;$\ref{lemma4p12temp} below.
\end{proof}

\begin{lemma}\label{lemma4p12temp}
Let $R,\,K,\,k$ be as in Theorem$\;\ref{thm4p11temp}$. The natural
map
\[
I^2(K)\lra\prod_{v\in\Omega_R}I^2(K_v)
\]is injective.
\end{lemma}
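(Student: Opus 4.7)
The plan is to climb the filtration $I^2(K)\supseteq I^3(K)\supseteq I^4(K)=0$, where the rightmost equality is Corollary~\ref{coro4p7temp}, using the Clifford and Arason invariants in turn. After replacing $R$ by its normalization---still a $2$-dimensional henselian excellent normal local domain with finite residue field and fraction field $K$---fix $q\in I^2(K)$ whose image in $I^2(K_v)$ is trivial for every $v\in\Omega_R$; I shall argue that $q=0$ in $W(K)$.

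First I would show that the Clifford invariant $e_2(q)\in\Br(K)[2]$ vanishes. The key intermediate assertion is the injectivity of
\[
\Br(K)[2]\lra\prod_{v\in\Omega_R}\Br(K_v)[2]\,.
\]
Indeed, if $\alpha\in\Br(K)[2]$ has trivial image in every $\Br(K_v)$, then $\partial_v(\alpha)=0$ for each $v\in\Omega_R$, since the residue map factors through the completion. For any regular proper model $\mathcal{X}\to\Spec R$ one then has $\alpha\in\Br(\mathcal{O}_{\mathcal{X},x})$ at every $x\in\mathcal{X}^{(1)}\subseteq\Omega_R$; by Theorem~\ref{thm2p1temp}(iii) this puts $\alpha$ in $\Br(\mathcal{X})$, which vanishes by Corollary~\ref{coro2p5temp} because the finite field $k$ has property $B_1$. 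Applying this to $\alpha=e_2(q)$ and using Merkurjev's isomorphism $I^2(K)/I^3(K)\simto\Br(K)[2]$, I conclude $q\in I^3(K)$.

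Next I would kill $q$ inside $I^3(K)$ via the Arason invariant. Because $I^4(K)=0$, the Arason--Elman--Jacob theorem (the reference already invoked in the proof of Theorem~\ref{thm1p5temp}) gives an isomorphism $e_3\colon I^3(K)\simto H^3(K,\mathbb{Z}/2)$, so it suffices to show $e_3(q)=0$. Fix a regular proper model $\mathcal{X}\to\Spec R$ on which the reduced closed fiber is an snc divisor; since $\mu_2\cong\mathbb{Z}/2$ canonically as Galois modules, Proposition~\ref{prop4p1temp} with $n=2$ yields the injectivity of
\[
H^3(K\,,\,\mathbb{Z}/2)\lra\prod_{v\in\mathcal{X}^{(1)}}H^3(K_v\,,\,\mathbb{Z}/2)\,.
\]
As $\mathcal{X}^{(1)}\subseteq\Omega_R$, each component of the image of $e_3(q)$ vanishes by hypothesis, whence $e_3(q)=0$ and $q\in I^4(K)=0$. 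This forces $q=0$ in $W(K)$, as required.

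I do not anticipate a genuine obstacle: the proof is essentially an organizational assembly of previously established tools. The only point that deserves a moment of care is the sub-injectivity of $\Br(K)[2]$ into the product of completions, which reduces---via the equivalence of ``unramified at $v$'' and ``lying in $\Br(\mathcal{O}_v)$'' for classes of order invertible in the residue field---to the vanishing $\Br(\mathcal{X})=0$ coming from the $B_1$ property of $k$.
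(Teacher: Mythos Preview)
Your proof is correct and is precisely the argument the paper has in mind: the paper does not spell out the details but refers to \cite[Thm.$\;$3.10]{CTOP} and says the same proof works once one substitutes Prop.$\;$\ref{prop4p1temp} (for the $H^3$-injectivity step) and Coro.$\;$\ref{coro4p7temp} (for $I^4(K)=0$), which is exactly the two-step descent along $I^2/I^3\cong\Br[2]$ and $I^3/I^4\cong H^3(-,\mathbb{Z}/2)$ that you have written out. Your auxiliary verification of the injectivity of $\Br(K)[2]\to\prod_v\Br(K_v)[2]$ via $\Br(\mathcal{X})=0$ is also the standard one (cf.\ Remark~\ref{remark4p13temp}).
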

\begin{proof}
In the case where $R$ is the henselization of an algebraic surface
over a finite field at a closed point, this is already established
in \cite[Thm.$\;$3.10]{CTOP}. Here the argument is essentially the
same, with Prop.$\;$\ref{prop4p1temp} and Coro.$\;$\ref{coro4p7temp}
substituting appropriate ingredients in that case.
\end{proof}

\begin{remark}\label{remark4p13temp}
In Thm.$\;$\ref{thm4p11temp}, if $\phi$ is of dimension 2 or 3, one need not assume the residue field $k$ finite.

Indeed, let $\psi$ and $\psi'$ be nonsingular forms representing classes in
$H^1(K\,,\,\mathrm{SO}(\phi))$. In the 2-dimensional case, assume
$\psi'\cong \langle a\,,\,b\rangle$ and
$\psi\cong\langle\alpha\,,\,\beta\rangle$. Then $\psi'\cong\psi$ if and
only if the quaternion algebras $(a,\,b)$ and $(\alpha\,,\,\beta)$ are
isomorphic, since the two forms have the same discriminant (cf.
\cite[Chapt.$\;$2, Coro.$\;$11.11]{Sch85}). In the 3-dimensional
case, assume $\psi'\cong \langle a\,,\,b\,,c\rangle$  and
$\psi\cong\langle \alpha\,,\,\beta\,,\,\gamma\rangle$. Then $\psi'\cong
\psi$ if and only if the quaternion algebras $(-ac\,,\,-bc)$ and
$(-\alpha\gamma\,,\,-\beta\gamma)$ are isomorphic (cf.
\cite[Chapt.$\;$2, Thm.$\;$11.12]{Sch85}). Since two quaternion
algebras are isomorphic if and only if their classes in the Brauer
group coincide, the result then follows from the injectivity of the natural map
\[
\Br(K)\lra\prod_{v\in\Omega_R}\Br(K_v)\,,
\]
this last local-global statement being essentially established in \cite[$\S$1]{CTOP} (see also the proof of \cite[Thm.$\;$1.1]{Hu10}).
\end{remark}

\begin{remark}\label{remarkQF3p4p16}
Let $F$ be a field of characteristic $\neq 2$ and $\Omega$ a set of
discrete valuations of $F$. For each integer $r\ge 2$, consider the
following statements:

($LG_r$) For any two nonsingular quadratic forms of rank $r$ that have the same discriminant over $F$, if they are isometric over $F_v$ for every $v\in\Omega$, then they must already be isometric over $F$.

($LG'_r$) If a nonsingular quadratic form of rank $r$ over $F$ is isotropic over $F_v$ for every $v\in\Omega$, then it is isotropic over $F$.

Theorem$\;$\ref{thm4p11temp} amounts to saying that for every $r\ge 2$, ($LG_r$) is true for the field $K$ with respect to its divisorial valuations. Our proof of this theorem does not rely on the local-global principle for the \textit{isotropy} of quadratic forms. Note however that over an \textit{arbitrary} field $F$ (of characteristic $\neq 2$) one has $(LG_r)+(LG'_{r+2})\Longrightarrow (LG_{r+1})$.

Indeed, let $\psi,\,\psi'$ be nonsingular quadratic forms of rank
$r+1$ over $F$ which have the same discriminant. Assume $\psi\cong \langle
a_1 \rangle\bot \psi_1$ with $\psi_1$ of rank $r$. If
$(\psi)_{F_v}\cong (\psi')_{F_v}$ for every $v\in \Omega$, then
$(\psi'\bot \langle -a_1 \rangle)_{F_v}$ is isotropic for every
$v\in\Omega$. By the local-global principle $(LG'_{r+2})$, $\psi'$ represents $a_1$ over $F$ whence a decomposition
$\psi'\cong \langle a_1\rangle\bot \psi'_1$. It then suffices to
apply $(LG_r)$ to the forms $\psi_1$ and $\psi'_1$, thanks to Witt's cancellation theorem.

Together with the argument in Remark$\;$\ref{remark4p13temp}, this
shows that if the natural map $\Br(F)\to\prod_{v\in\Omega}\Br(F_v)$
is injective and if the local-global principle with respect to
$\Omega$ holds for quadratic forms of rank $\ge 5$ over $F$, then
$(LG_r)$ is true for all $r\ge 2$. In particular, if $F$ is the function field of an algebraic curve over the fraction field of
a complete discrete valuation ring with \emph{arbitrary} residue field of characteristic $\neq 2$, then the analog of Thm.$\;$\ref{thm4p11temp} over $F$ is true by \cite[Theorems$\;$3.1 and 4.3]{CTPaSu}.  Note also that in this situation $(LG'_2)$ is false in general.
\end{remark}

\

\noindent \emph{Acknowledgements.} I got interested in the problems considered in this paper when I was
attending the workshop ``Deformation theory, patching, quadratic forms, and the Brauer group'' held at
American Institute of Mathematics (AIM) (Palo Alto, USA, January, 2011). I would like to thank AIM and the organizers
of this workshop for kind hospitality and generous support. I'm grateful to my advisor,
Prof.\! Jean-Louis Colliot-Th\'{e}l\`{e}ne, for helpful discussions and comments. Thanks are also due to Prof.\! Shuji Saito,
with whom a conversation has helped me to find the answer to a question that is needed in the paper. I also thank Professors Raman Parimala and Venapally Suresh
for sending me their new preprint on degree three cohomology. I'm indebted to the referee for a long list of valuable comments.

%\addcontentsline{toc}{section}{\textbf{References}}

\end{document}